\documentclass{article}
\usepackage{geometry}
\usepackage{amsmath,amsthm,amssymb,mathtools,microtype,needspace}
\usepackage{mathrsfs} 
\usepackage{hyperref}
\hypersetup{
    pdftitle={Sharp Hardy inequalities for the Vladimirov--Taibleson operator on the n-dimensional p-adic space},
    pdfauthor={},
    pdfsubject={Sharp Lq constants, ground state representations, and nonlinear difference energies},
    pdfkeywords={p-adic analysis, Vladimirov-Taibleson operator, Hardy inequality, sharp constant}
}

\theoremstyle{plain}
\newtheorem{theorem}{Theorem}[section]
\newtheorem{lemma}[theorem]{Lemma}
\newtheorem{corollary}[theorem]{Corollary}
\newtheorem{proposition}[theorem]{Proposition}

\theoremstyle{definition}
\newtheorem{definition}{Definition}[section]

\newtheoremstyle{boldremark}
  {\topsep}{\topsep}
  {\normalfont}{}
  {\bfseries}{.}{0.5em}{}
\theoremstyle{boldremark}
\newtheorem{remark}[theorem]{Remark}

\numberwithin{equation}{section}
\allowdisplaybreaks[2]

\newcommand{\bx}{\boldsymbol{x}}
\newcommand{\by}{\boldsymbol{y}}
\newcommand{\bz}{\boldsymbol{z}}
\newcommand{\bxi}{\boldsymbol{\xi}}
\newcommand{\Qp}{\mathbb{Q}_p}
\newcommand{\Qpn}{\mathbb{Q}_p^n}
\newcommand{\Zp}{\mathbb{Z}_p}
\newcommand{\Sspace}{\mathcal{S}(\Qpn)}
\newcommand{\Sp}{\mathcal{S}'(\Qpn)}
\newcommand{\rn}[1]{\lVert #1\rVert_p}

\newcommand{\dd}{\,d}
\newcommand{\ind}{\boldsymbol{1}}
\newcommand{\GammaN}{\Gamma_p^{(n)}}
\newcommand{\E}{\mathcal{E}}
\newcommand{\G}{\mathcal{G}}
\newcommand{\W}{\mathcal{W}}
\newcommand{\F}{\mathcal{F}}
\newcommand{\supp}{\operatorname{supp}}
\newcommand{\csharp}{C^{\sharp}_{p,n,\alpha}}
\newcommand{\Msharp}{M_{p,n,s,q}}
\newcommand{\Hsharp}{H_{p,n,s,q}}
\newcommand{\X}{\mathcal{X}_{s,q}}
\newcommand{\doi}[1]{\href{https://doi.org/#1}{\nolinkurl{doi:#1}}}
\newcommand{\arxiv}[1]{\href{https://arxiv.org/abs/#1}{\nolinkurl{arXiv:#1}}}

\title{Sharp Hardy inequalities for the Vladimirov--Taibleson operator on $\Qpn$}

\author{Yaojia Sun}
\date{}

\begin{document}
\maketitle

\begin{abstract}
We prove sharp Hardy inequalities on $\Qpn$ for the quadratic form of the Vladimirov--Taibleson operator, the $L^q$ norm of its fractional derivative, and a nonlinear difference energy. Under the self-dual Fourier normalization, the sharp quadratic-form constant for $0<\alpha<n$ is
$[(1-p^{-(n-\alpha)/2})/(1-p^{-(n+\alpha)/2})]^2$, with a remainder given by the ground state representation.
For $1<q<\infty$, $q'=q/(q-1)$, and $0<s<n/q$, a weighted Schur estimate determines the exact $L^q$ norm of the weighted Riesz potential $\rn{\bx}^{-s}I_s$, giving the sharp linear Hardy constant
$[\GammaN(n/q'+s)/\GammaN(n/q')]^q$. The two-point Picone inequality and shell decomposition give the sharp constant
$2(1-p^{-n})\sum_{m\ge1}p^{-sqm}(1-p^{-(n/q-s)m})^q$
for the Gagliardo-type energy with kernel $\rn{\bx-\by}^{-n-sq}$.
Annular truncations of the critical powers give explicit minimizing sequences with error estimates. Equality is attained only by the zero function in each stated class. For $q=2$, both $L^q$ inequalities reduce to the quadratic-form inequality. Uniform local constancy makes the difference energy finite for every Bruhat--Schwartz function and every $s>0$. The Hardy inequalities on the full test-function space therefore hold precisely when $sq<n$, without the restriction $s<1$.
\end{abstract}
\noindent\textbf{Keywords:} $p$-adic analysis; Vladimirov--Taibleson operator; Hardy inequality; sharp constant; ground state representation; Riesz potential; Picone inequality.

\noindent\textbf{2020 Mathematics Subject Classification.} Primary 26D10; Secondary 11S80, 47G30.

\section{Introduction}\label{sec:intro}
Hardy inequalities bound singularly weighted integrals by differential energies; their sharp constants determine critical coupling strengths for a class of singular potentials. Classical sources include Hardy's work and the monograph of Hardy, Littlewood, and P\'olya~\cite{Hardy,HLP}. In Euclidean space, the classical Hardy inequality reads
\begin{equation*}
 \left(\frac{n-q}{q}\right)^q\int_{\mathbb R^n}\frac{|u(x)|^q}{|x|^q}\dd x
 \le \int_{\mathbb R^n}|\nabla u(x)|^q\dd x,
 \qquad 1<q<n,
\end{equation*}
for $u\in C_c^\infty(\mathbb R^n)$. The corresponding critical homogeneous function lies outside the weighted space and is not an admissible extremizer. We determine the sharp constants and construct minimizing sequences for three fractional Hardy inequalities on $\Qpn$: those involving the Vladimirov--Taibleson quadratic form, the $L^q$ norm of the fractional derivative, and a nonlinear difference energy.

Fractional Hardy inequalities can be formulated in terms of Fourier multipliers, Riesz potentials, or difference energies. The weighted fractional integration theory of Stein and Weiss~\cite{SteinWeiss} derives Hardy inequalities from Riesz potential estimates. Lieb~\cite{Lieb} determined sharp constants in the Hardy--Littlewood--Sobolev inequality and related inequalities. In the quadratic setting, Herbst~\cite{Herbst} studied the critical coupling of a relativistic operator with a Coulomb potential. Beckner~\cite{Beckner} and Yafaev~\cite{Yafaev} investigated sharp constants in Pitt's inequality and the Hardy--Rellich inequalities, respectively, while Frank, Lieb, and Seiringer~\cite{FLS} used fractional Hardy inequalities in spectral estimates for Schr\"odinger operators. In these estimates, the order of the singular potential matches the homogeneity of the operator.

For general exponents $q$, Frank and Seiringer~\cite{FS} developed a nonlinear ground state substitution that expresses the difference between a nonlocal energy and the Hardy term as a nonnegative remainder. Their half-space results~\cite{FShalf}, together with the work of Bogdan and Dyda~\cite{BD}, address the dependence of sharp constants on boundary geometry. Brasco and Franzina~\cite{BF} studied the connection between convexity and Picone-type inequalities. We prove the two-point Picone inequality and its equality case in Section~\ref{sec:nonlinear}.

Different energies lead to different Hardy inequalities in $L^q$. Bogdan, Jakubowski, Lenczewska, and Pietruska-Pa\l uba~\cite{BJLP} considered the energy obtained by pairing the fractional Laplacian with $|u|^{q-2}u$, and its relation to semigroups on $L^q$. For $q\ne2$, this dissipative energy, the functional $\|(-\Delta)^{s/2}u\|_q^q$, and the Gagliardo difference energy are distinct. We consider the $p$-adic counterparts of the latter two.

Taibleson's monograph~\cite{Taibleson} develops Fourier analysis on local fields, including radial multipliers and convolution kernels. Background on $p$-adic analysis, pseudodifferential equations, and distribution theory is provided by Vladimirov, Volovich, and Zelenov~\cite{VVZ}, Kochubei~\cite{KochBook}, and Albeverio, Khrennikov, and Shelkovich~\cite{AKS}, respectively. Haran~\cite{Haran} studied potential theory over the $p$-adics. Rodr\'iguez-Vega and Z\'u\~niga-Galindo~\cite{RZ} used multidimensional Riesz kernels and hypersingular integral representations to study the Vladimirov--Taibleson operator, parabolic equations, and ultrametric diffusion on $\Qpn$. Bendikov, Grigor'yan, Pittet, and Woess~\cite{BGPW} investigated isotropic Markov semigroups on ultrametric spaces and the relations between radial jump kernels, the hierarchy of balls, and diffusion processes.

Kochubei~\cite{Koch2023} established functional inequalities for the Vladimirov--Taibleson operator and studied its Dirichlet problem and boundary H\"older regularity. Bendikov, Grigor'yan, and Molchanov~\cite{BGM} considered hierarchical Schr\"odinger operators with singular potentials. On $\Qp$, for $0<\alpha<1$, their critical potential coefficient is
\[
 -\left[\Gamma_p\left(\frac{1+\alpha}{2}\right)\right]^2.
\]
For $n=1$, this coefficient is the negative of our sharp quadratic-form Hardy constant. We prove the corresponding ground state representation on $\Qpn$ and construct annular minimizing sequences in Section~\ref{sec:L2}; the relations for $q=2$ are given in Section~\ref{sec:compatibility}.

Fu, Wu, and Lu~\cite{FWL} obtained sharp bounds for several $p$-adic Hardy and Hardy--Littlewood--P\'olya averaging operators. Their kernels differ from the Riesz kernel used in the derivative-norm estimate below. Kassymov and Velasquez-Rodriguez~\cite{KV} studied Hardy, Hardy--Littlewood--Sobolev, and Stein--Weiss estimates on constant-order Vilenkin groups and related non-Archimedean groups. Their results include exact norms for some averaging operators, as well as weighted estimates for Riesz potentials and Vladimirov--Taibleson operators. The constants in their Theorem~2.34 and Corollary~2.35 are not specified explicitly. Here we compute the exact norm of $\rn{\bx}^{-s}I_s$ on $L^q(\Qpn)$; the sharp constant is a ratio of $p$-adic Gamma factors.

The nonlinear estimate uses the critical power function and Picone's inequality~\cite{FS,BF}. Points on shells of different radii are separated by the larger radius. The energy of a truncated power function thus reduces to a sum over shell pairs; for pairs inside the annulus, each contribution depends only on the difference of the shell indices. Formulas for the action of fractional operators on power functions are a standard tool in fractional analysis; see, for example, Dyda~\cite{DydaPower} for Euclidean power functions and related eigenvalue problems. The formulas used here follow from $p$-adic Riesz kernels and shell decompositions.

Unlike the Euclidean seminorm defined by first differences~\cite{DNPV}, the difference energy of a Bruhat--Schwartz function has no diagonal singularity: the difference vanishes in a neighborhood of the diagonal. There is therefore no need to assume $s<1$. The condition $sq<n$ instead comes from local integrability of the Hardy weight. The critical case $sq=n$ requires a different weight or function class. Quantitative lower bounds for the subcritical remainder are a further question.

To state the main results, fix a prime $p$ and an integer $n\ge1$, and write
\[
 \rn{\bx}=\max_{1\le j\le n}|x_j|_p,
\]
and normalize Haar measure by $|\Zp^n|=1$. Let $\Sspace$ denote the Bruhat--Schwartz space of compactly supported, locally constant complex-valued functions. For $a>0$, let $D^a$ be the nonnegative Vladimirov--Taibleson operator with symbol $\rn{\bxi}^a$. Our Fourier transform convention is specified in Section~\ref{subsec:Fourier}. Define the $p$-adic Gamma factor by
\begin{equation}\label{eq:Gammadef}
 \GammaN(z)=\frac{1-p^{z-n}}{1-p^{-z}},
 \qquad z\in\mathbb R\setminus\{0\},
\end{equation}
where, for $n=1$, we write $\Gamma_p=\Gamma_p^{(1)}$. We have $\GammaN(z)>0$ for $0<z<n$. Set
\begin{equation}\label{eq:kernelconstant}
 C_{n,a}=-\frac1{\GammaN(-a)}=\frac{p^a-1}{1-p^{-n-a}}>0.
\end{equation}
For $u\in\Sspace$, define
\begin{align}
 \E_\alpha[u]&=\langle D^\alpha u,u\rangle_{L^2},\label{eq:Edef}\\
 \W_{s,q}[u]&=\int_{\Qpn}\rn{\bx}^{-sq}|u(\bx)|^q\dd \bx,\notag\\
 \G_{s,q}[u]&=\iint_{\Qpn\times\Qpn}
 \frac{|u(\bx)-u(\by)|^q}{\rn{\bx-\by}^{n+sq}}\dd \bx\dd \by.\notag
\end{align}

\begin{theorem}[Sharp quadratic-form Hardy inequality]\label{thm:mainL2}
Let $0<\alpha<n$. Then, for every $u\in\Sspace$,
\begin{equation}\label{eq:mainL2}
 \begin{gathered}
 \csharp\int_{\Qpn}\frac{|u(\bx)|^2}{\rn{\bx}^{\alpha}}\dd \bx
 \le \E_\alpha[u],\qquad
 \csharp=\left[\GammaN\left(\frac{n+\alpha}{2}\right)\right]^2
 =\left(\frac{1-p^{-(n-\alpha)/2}}{1-p^{-(n+\alpha)/2}}\right)^2.
 \end{gathered}
\end{equation}
For $h(\bx)=\rn{\bx}^{-(n-\alpha)/2}$, the ground state representation is
\begin{equation}\label{eq:mainGS}
 \E_\alpha[u]-\csharp\int_{\Qpn}\rn{\bx}^{-\alpha}|u(\bx)|^2\dd \bx
 =\frac{C_{n,\alpha}}2\iint
 \frac{h(\bx)h(\by)}{\rn{\bx-\by}^{n+\alpha}}
 \left|\frac{u(\bx)}{h(\bx)}-\frac{u(\by)}{h(\by)}\right|^2\dd \bx\dd \by.
\end{equation}
The constant $\csharp$ is sharp. Both \eqref{eq:mainL2} and \eqref{eq:mainGS} remain valid for measurable functions whose Hardy term and quadratic difference energy are finite, with $\E_\alpha$ defined by the difference integral. In this class, equality holds in \eqref{eq:mainL2} if and only if $u=0$ almost everywhere.
\end{theorem}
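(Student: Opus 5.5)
We prove the ground state representation \eqref{eq:mainGS} first; \eqref{eq:mainL2} and the equality statement follow from it, and sharpness is shown separately.

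\textbf{Hypersingular representation.} For $u\in\Sspace$ we use the standard representation of the Vladimirov--Taibleson operator,
\[
D^\alpha u(\bx)=C_{n,\alpha}\int_{\Qpn}\frac{u(\bx)-u(\by)}{\rn{\bx-\by}^{n+\alpha}}\dd\by .
\]
This follows from Fourier inversion together with $\widehat{\rn{\cdot}^{-n-\alpha}}$, interpreted as the Riesz distribution with constant $-1/\GammaN(-\alpha)$. Consequently
\[
\E_\alpha[u]=\frac{C_{n,\alpha}}2\iint\frac{|u(\bx)-u(\by)|^2}{\rn{\bx-\by}^{n+\alpha}}\dd\bx\dd\by .
\]

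\textbf{Ground state identity.} The key computation is that $h=\rn{\cdot}^{-(n-\alpha)/2}$ satisfies
\[
C_{n,\alpha}\,\mathrm{p.v.}\!\int\frac{h(\bx)-h(\by)}{\rn{\bx-\by}^{n+\alpha}}\dd\by=\csharp\,\rn{\bx}^{-\alpha}h(\bx)
\]
pointwise for $\bx\ne0$. We verify this by a shell decomposition. Since the integrand is homogeneous, scaling reduces it to $\rn{\bx}=1$. Split $\by$ into three regions:
\begin{itemize}
\item $\rn{\by}<1$, where $\rn{\bx-\by}=1$;
\item $\rn{\by}>1$, where $\rn{\bx-\by}=\rn{\by}$;
\item $\rn{\by}=1$, where $h(\by)=h(\bx)$, so this region contributes nothing.
\end{itemize}
Each of the first two regions gives a geometric series in $p^{-m}$, using shell measures $(1-p^{-n})p^{mn}$. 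Summing them gives a rational function of $p^{-(n\pm\alpha)/2}$, which should simplify to $\bigl[\GammaN((n+\alpha)/2)\bigr]^2$. This is a Gamma-factor identity of the form $\GammaN(z)\GammaN(n-z)$-type, which I would check directly.

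Next write $u=hv$ and expand
\[
|u(\bx)-u(\by)|^2=h(\bx)h(\by)|v(\bx)-v(\by)|^2+(h(\bx)-h(\by))\bigl(h(\bx)|v(\bx)|^2-h(\by)|v(\by)|^2\bigr).
\]
Integrate the second term and symmetrize, using the identity above. This produces $\csharp\int\rn{\bx}^{-\alpha}|u|^2$, which gives \eqref{eq:mainGS}.

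\textbf{Main obstacle: justifying the exchange of integrals.} The splitting is justified as follows. For $u\in\Sspace$, $u/h$ is locally constant away from $0$. Near $0$, $h$ is singular but $u$ is constant on a ball around $0$, and $(n-\alpha)/2<n$ gives local integrability. I would truncate to $\rn{\bx-\by}>p^{-N}$, perform the algebra with absolutely convergent integrals, and let $N\to\infty$ using monotone convergence on the nonnegative pieces.

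For the measurable class, the same identity holds by approximation. Alternatively, apply the pointwise expansion directly: the cross term is integrable because the Hardy term is finite and the kernel estimate for $h$ holds with absolute values. Equality in \eqref{eq:mainL2} forces the remainder to vanish. Since the kernel is positive everywhere, $u/h$ is then a.e. constant, so $u=ch$. But $h^2\rn{\bx}^{-\alpha}=\rn{\bx}^{-n}$ is not integrable, and finiteness of the Hardy term then forces $c=0$.

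\textbf{Sharpness.} Take annular truncations
\[
u_N=\rn{\bx}^{-(n-\alpha)/2}\,\ind_{\{p^{-N}\le\rn{\bx}\le p^{N}\}},
\]
which lie in $\Sspace$. The Hardy term grows like $2N(1-p^{-n})$. The remainder in \eqref{eq:mainGS} only sees pairs with $\by$ outside the annulus and $\bx$ inside; by the shell computation it is bounded by a geometric sum that is $O(1)$ uniformly in $N$. Hence the ratio $\E_\alpha[u_N]/\int\rn{\bx}^{-\alpha}|u_N|^2$ tends to $\csharp$, so the constant cannot be improved.
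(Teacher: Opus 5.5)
Your proposal is correct and follows essentially the same route as the paper. The shared steps are the hypersingular representation of $D^\alpha$, the shell computation giving $D^\alpha h=\csharp\rn{\bx}^{-\alpha}h$ (the paper obtains it from its general power-function formula plus the reflection identity $\GammaN(z)\GammaN(n-z)=1$), and the identity $|Az-Bt|^2-AB|z-t|^2=(A-B)(A|z|^2-B|t|^2)$, made rigorous by the absolute kernel estimate for $h$ and finiteness of the Hardy term. Sharpness likewise comes from annular truncations whose ground-state remainder is bounded independently of $N$. Of your two justifications for the measurable class, the direct one is exactly the paper's argument and suffices, so the vaguer ``by approximation'' route can be dropped.
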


\begin{theorem}[Sharp linear $L^q$ Hardy inequality]\label{thm:mainlinear}
Let $1<q<\infty$, $q'=q/(q-1)$, and $0<s<n/q$. Then
\begin{equation}\label{eq:mainlinear}
 \Msharp^q\W_{s,q}[u]\le\|D^su\|_{L^q}^q,
 \qquad u\in\Sspace,
 \qquad
 \Msharp=\frac{\GammaN(n/q'+s)}{\GammaN(n/q')}.
\end{equation}
The constant $\Msharp^q$ is sharp, and equality holds in \eqref{eq:mainlinear} if and only if $u\equiv0$. Let $I_sf=k_s*f$ be the Riesz potential, where $k_s$ is the Riesz kernel defined in Section~\ref{sec:prelim}. The positive integral operator
\[
 T_sf(\bx)=\rn{\bx}^{-s}I_sf(\bx)
\]
is bounded on $L^q(\Qpn)$, with
\begin{equation}\label{eq:mainnorm}
 \|T_s\|_{L^q\to L^q}=\Msharp^{-1}.
\end{equation}
For $f\in L^q(\Qpn)$, the equality $\|T_sf\|_q=\Msharp^{-1}\|f\|_q$ holds if and only if $f=0$ almost everywhere.
\end{theorem}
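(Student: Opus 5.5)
The plan is to reduce the Hardy inequality to the operator norm of $T_s$ and compute that norm with a Schur test whose test function is a critical power. For $u\in\Sspace$, put $f=D^su$. Then $u=I_sf$, since $D^s$ and $I_s$ are inverse multipliers $\rn{\bxi}^{\pm s}$ on the relevant classes; for $0<s<n/q$, $f\in L^q$ and the Riesz representation $u=k_s*f$ holds pointwise. Hence
\[
\W_{s,q}[u]=\|T_sf\|_q^q ,
\]
and \eqref{eq:mainlinear} with its sharpness is equivalent to \eqref{eq:mainnorm}. Sharpness requires that the functions $D^su$, $u\in\Sspace$, be dense enough in $L^q$ to approach the norm. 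Rather than prove density, I plan to show directly that truncated powers $u$ approach the constant.

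For the upper bound, the kernel of $T_s$ is $K(\bx,\by)=\rn{\bx}^{-s}\,c\,\rn{\bx-\by}^{s-n}$ with $c=1/\GammaN(s)$. I apply the weighted Schur test with $w(\bx)=\rn{\bx}^{-n/q}$. The required identities are
\[
\int K(\bx,\by)\,w(\by)^{q'}\dd\by=\lambda\, w(\bx)^{q'},\qquad
\int K(\bx,\by)\,w(\bx)^{q}\dd\bx=\mu\, w(\by)^{q}.
\]
By homogeneity and the Riesz composition formula $k_s*\rn{\cdot}^{-\beta}=\frac{\GammaN(n-\beta)}{\GammaN(n-\beta+s)}\cdot$(a power), valid for $s<\beta<n$, both reduce to Gamma-factor ratios. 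The first uses $\beta=n/q$. The second uses the transpose $\rn{\bx}^{-s-n}$ integrated against $\rn{\bx-\by}^{s-n}$, so effectively $\beta=n/q'$-type exponents. I expect
\[
\lambda=\frac{\GammaN(n/q)}{\GammaN(n/q-s)}\cdot\frac{1}{\GammaN(s)}\cdots,
\]
and the final norm bound $\lambda^{1/q'}\mu^{1/q}=\GammaN(n/q')/\GammaN(n/q'+s)$, using the reflection $\GammaN(z)\GammaN(n-z)=1$. I will compute these with shell sums directly from \eqref{eq:Gammadef}; the convergence conditions are exactly $0<s<n/q$. Equality in Schur's test (Hölder) forces $|f|^q$ to be proportional to $w^{-q}$ times a function that cannot lie in $L^q$, so only $f=0$ attains equality. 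I will make this precise through the equality case of Hölder with positive kernel: $f(\by)w(\by)^{-q'/q}$ would have to be constant a.e.

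For the lower bound I take $f_N=\rn{\bx}^{-n/q}\ind_{\{p^{-N}\le\rn{\bx}\le p^{N}\}}$, which gives $\|f_N\|_q^q\sim 2N(1-p^{-n})$. For $\bx$ in the middle of the annulus, $T_sf_N(\bx)$ equals the full eigenvalue times $w$ up to tail errors that are geometrically small in the distance to the boundary shells. Thus $\|T_sf_N\|_q/\|f_N\|_q\to\Msharp^{-1}$. To transfer this to the Hardy inequality, I need $u_N\in\Sspace$ with $D^su_N$ close to $f_N$. I will take $u_N=I_sf_N$ cut off at a large radius (this is locally constant away from $0$, but near the origin it is fine since $f_N$ vanishes there), or alternatively $u_N=\rn{\bx}^{s-n/q}$ truncated to the annulus. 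In the latter case I compute $D^su_N$ by shells and show that it equals $\Msharp\,\rn{\bx}^{-s}u_N$ plus boundary errors of size $O(1)$ in $L^q$, while $\W_{s,q}[u_N]\sim N$.

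The main obstacle is controlling these boundary and tail error terms uniformly. In particular, $D^s$ of the truncated power does not have compact support, so its $L^q$ norm over $\rn{\bx}>p^N$ must be shown to be $O(1)$, which follows from its decay $\rn{\bx}^{-n-s}$ times the mass.
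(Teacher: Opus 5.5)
Your architecture matches the paper's. Riesz inversion $u=I_sD^su$ turns $\W_{s,q}[u]$ into $\|T_sf\|_q^q$. A weighted Schur test, with critical powers evaluated by $k_s*k_b=k_{s+b}$ and the reflection identity, gives the norm bound. The H\"older equality case excludes extremizers. Annular truncations of $\rn{\bx}^{s-n/q}$ give sharpness.

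\textbf{The Schur step fails as written.} With the Schur function $w=\rn{\bx}^{-n/q}$ you have $w^q=\rn{\bx}^{-n}$, so the adjoint integral
\[
\int\rn{\bx}^{-s}k_s(\bx-\by)\rn{\bx}^{-n}\dd\bx
\]
diverges at $\bx=0$ for every $\by$: for $\rn{\bx}<\rn{\by}$ one has $k_s(\bx-\by)=k_s(\by)>0$, and $\rn{\bx}^{-n-s}$ is not integrable. The forward integral also diverges when $q\le2$, since $w^{q'}=\rn{\bx}^{-n/(q-1)}$.

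The exponents you actually quote ($\beta=n/q$ forward, $n/q'$ on the adjoint side) correspond instead to the weight $h=\rn{\bx}^{-n/(qq')}$. For it, $h^{q'}=\rn{\bx}^{-n/q}$ and $h^q=\rn{\bx}^{-n/q'}$. Apply your composition formula with $\beta=n/q$ and with $\beta=s+n/q'$; both lie in $(s,n)$ precisely because $s<n/q$. This gives
\[
T_s(h^{q'})=\frac{\GammaN(n/q')}{\GammaN(n/q'+s)}\,h^{q'},\qquad
T_s^*(h^q)=\frac{\GammaN(n/q-s)}{\GammaN(n/q)}\,h^q,
\]
and reflection shows that both eigenvalues equal $\Msharp^{-1}$. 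Your tentative $\lambda$ (the reciprocal ratio times $1/\GammaN(s)$) is therefore wrong; the factor $1/\GammaN(s)$ is already built into $k_s$.

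\textbf{Equality case.} Both Schur bounds are identities, so equality in H\"older's inequality with respect to $K_s(\bx,\by)\dd\by$ forces $|f|^qh^{-q}\propto h^{q'}$. That is, $|f|\propto h^{q'}=\rn{\by}^{-n/q}$, not $|f|\propto w^{q'/q}$ as you wrote. No nonzero pure power lies in $L^q(\Qpn)$, so the conclusion $f=0$ still stands. You should also record that equality for complex $f$ passes to $|f|$, via $\|T_sf\|_q\le\|T_s|f|\|_q\le\Msharp^{-1}\|f\|_q$.

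\textbf{Sharpness.} Your second option is exactly the paper's argument: a shell formula for $R_N=D^su_N-\Msharp\rn{\bx}^{-s}u_N$ and a uniform bound $\|R_N\|_q\le C$. Besides the outer tail you mention, two other regions must be shown to contribute $O(1)$:
\begin{itemize}
\item the inner ball $B_{N+1}$, where $R_N$ is a constant of size about $p^{nN/q}$ on a set of measure $p^{-n(N+1)}$;
\item the annulus itself, where $R_N$ is controlled by geometric tails.
\end{itemize}
This option also makes your first option (the detour through $f_N=\rn{\bx}^{-n/q}\boldsymbol{1}_{A_N}$ and a transfer step) unnecessary. Taking $f_N=D^su_N$, inversion gives $T_sf_N=\rn{\bx}^{-s}u_N$. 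The same sequence then yields both the sharp Hardy constant and the lower bound $\|T_s\|_{q\to q}\ge\Msharp^{-1}$.
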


\begin{theorem}[Sharp Hardy inequality for the nonlinear difference energy]\label{thm:mainnonlinear}
Under the assumptions of Theorem~\ref{thm:mainlinear}, let
\begin{equation}\label{eq:Hseries}
 \Hsharp=2(1-p^{-n})\sum_{m=1}^{\infty}
 p^{-sqm}\bigl(1-p^{-(n/q-s)m}\bigr)^q.
\end{equation}
Then
\begin{equation}\label{eq:mainnonlinear}
 \Hsharp\W_{s,q}[u]\le\G_{s,q}[u],\qquad u\in\Sspace.
\end{equation}
The constant $\Hsharp$ is sharp. Define
\begin{equation}\label{eq:Xdef}
 \X=\{u:\Qpn\to\mathbb C\text{ measurable}:\ \W_{s,q}[u]<\infty,\ \G_{s,q}[u]<\infty\}.
\end{equation}
Inequality \eqref{eq:mainnonlinear} holds for every $u\in\X$, with equality if and only if $u=0$ almost everywhere. The corresponding remainder identity is given in Theorem~\ref{thm:nonlinearidentity}.
\end{theorem}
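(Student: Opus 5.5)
Our approach to Theorem~\ref{thm:mainnonlinear} is the Frank--Seiringer ground state substitution with the critical power $w(\bx)=\rn{\bx}^{-(n-sq)/q}=\rn{\bx}^{-(n/q-s)}$. The first step is to compute, for $\bx\ne0$, the nonlinear ``eigenvalue'' quantity
\[
 V(\bx)=2\,\mathrm{p.v.}\!\int_{\Qpn}\frac{|w(\bx)-w(\by)|^{q-2}(w(\bx)-w(\by))}{\rn{\bx-\by}^{n+sq}}\dd\by,
\]
and to show that $V(\bx)=\Hsharp\,\rn{\bx}^{-sq}w(\bx)^{q-1}$. We do this by a shell decomposition. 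Fix $\rn{\bx}=p^{k}$ and split $\by$ over the shells $S_j=\{\rn{\by}=p^j\}$. For $j\ne k$ we have $\rn{\bx-\by}=p^{\max(j,k)}$. On the shell $j=k$ the difference $w(\bx)-w(\by)$ vanishes, so there is no diagonal singularity and the principal value is an honest integral. Summing the contributions of the shells with $j<k$ (inner shells, where $w(\by)>w(\bx)$) and $j>k$ (outer shells), and using $|S_j|=(1-p^{-n})p^{jn}$ together with homogeneity, we obtain a sum over $m=|j-k|\ge1$. Pairing the inner and outer terms, we then check that the total collapses to the stated series, whose terms are $2(1-p^{-n})p^{-sqm}(1-p^{-(n/q-s)m})^q$ in the form \eqref{eq:Hseries}. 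The most transparent way to obtain that form is not to evaluate $V$ pointwise but to evaluate $\G_{s,q}$ on an annular truncation $w\ind_{\{p^{-N}\le\rn{\bx}\le p^{N}\}}$: pairs of shells inside the annulus contribute terms depending only on $m$, and the sum over each $m$ produces the factor $(1-p^{-(n/q-s)m})^q$. The pointwise identity for $V$ then follows by a discrete summation-by-parts identity. I expect this bookkeeping, that is, matching the signed $(q-1)$-power sum with the $q$-power series, to be the main computational obstacle. If the direct pointwise evaluation is messy, we will instead derive the inequality through the truncation route described below.

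Next comes the inequality. For $u\in\Sspace$, or more generally $u\in\X$, write $u=wv$. We apply the two-point Picone inequality
\[
 |a-b|^q\ge(|a|^{q}/w_1^{q-1}-|b|^{q}/w_2^{q-1})\,|w_1-w_2|^{q-2}(w_1-w_2),
\]
valid for $w_1,w_2>0$, whose equality case is $a/w_1=b/w_2$ with a common phase; this is the two-point inequality stated in the paper's Section~\ref{sec:nonlinear}. Integrating against $\rn{\bx-\by}^{-n-sq}$ and symmetrizing gives
\[
 \G_{s,q}[u]\ge\int |u|^q w^{1-q}V\dd\bx=\Hsharp\W_{s,q}[u].
\]
The remainder identity of Theorem~\ref{thm:nonlinearidentity} is exactly the integrated Picone defect. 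To justify exchanging the integrals, we first treat $u\in\Sspace$, where $u$ vanishes near $0$ up to a constant and is locally constant, so every integral is absolutely convergent; note that $sq<n$ gives local integrability of $\rn{\bx}^{-sq}$. We then extend to $\X$ by truncating $u$ by level and by shell and applying Fatou's lemma to the nonnegative Picone integrand.

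For sharpness, we take $u_N=w\ind_{\{p^{-N}\le\rn{\bx}\le p^N\}}\in\Sspace$. We have $\W_{s,q}[u_N]=(1-p^{-n})(2N+1)$, which grows linearly in $N$. The shell-pair computation gives $\G_{s,q}[u_N]=\Hsharp(2N+1)(1-p^{-n})+O(1)$, where the boundary terms are bounded because the series in $m$ converges geometrically (since $sq>0$ and $n/q-s>0$). Hence the ratio $\G_{s,q}[u_N]/\W_{s,q}[u_N]$ tends to $\Hsharp$.

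Finally, for the equality case in $\X$, equality forces the Picone defect to vanish almost everywhere. Then $u/w$ is constant off a null set, because the kernel is positive for all pairs. So $u=cw$, and since $\W_{s,q}[w]=\infty$, we must have $c=0$.
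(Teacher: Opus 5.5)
Your main line is the paper's proof. It uses the critical power $w$, the shell evaluation of $\mathscr L_{s,q}w$ (Proposition~\ref{prop:nonlinearground}), the two-point Picone inequality with symmetrized integration, the equality case via $\W_{s,q}[w]=\infty$, and the shell-pair count for $u_N$ (Lemma~\ref{lem:shellpairs}, Proposition~\ref{prop:nonlinearenergy}). What does not hold together as written is your justification for integrating the Picone inequality and passing to $\X$.

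Absolute integrability of the signed term $J_q(w(\bx)-w(\by))\bigl(|u(\bx)|^qw(\bx)^{1-q}-|u(\by)|^qw(\by)^{1-q}\bigr)\rn{\bx-\by}^{-n-sq}$ does not come from local constancy of $u$. Near the origin, $|u|^qw^{1-q}=|u(\boldsymbol0)|^q\rn{\bx}^{(n/q-s)(q-1)}$ even for $u\in\Sspace$. Your extension to $\X$ also does not close, for three reasons:
\begin{itemize}
\item Level and shell truncations of a general $u\in\X$ are not test functions, so your $\Sspace$ case does not apply to them.
\item Multiplying by $\ind_{A_N}$ can increase $\G_{s,q}$.
\item Fatou's lemma on the nonnegative defect only gives $\iint\mathfrak D_w[u]\,\rn{\bx-\by}^{-n-sq}\le\liminf_M\bigl(\G_{s,q}[u_M]-\Hsharp\W_{s,q}[u_M]\bigr)$. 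This yields nothing without a separate proof that $\G_{s,q}[u_M]\to\G_{s,q}[u]$.
\end{itemize}
The missing ingredient is the absolute estimate of Lemma~\ref{lem:nonlinearabs}. Running your shell sums with absolute values gives
\[
 2\int\frac{|w(\bx)-w(\by)|^{q-1}}{\rn{\bx-\by}^{n+sq}}\dd\by
 \le\frac{4(1-p^{-n})}{p^{sq}-1}\rn{\bx}^{-sq}w(\bx)^{q-1}.
\]
Hence the signed term has absolute integral at most $\frac{4(1-p^{-n})}{p^{sq}-1}\W_{s,q}[u]$ for every measurable $u$. Fubini's theorem then applies directly on all of $\X$, with no truncation, and gives the exact remainder identity of Theorem~\ref{thm:nonlinearidentity}. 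That identity is what your equality argument actually needs, since it rests on the defect vanishing almost everywhere.

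The pointwise identity you expect to be the main obstacle is immediate. For $\rn{\bx}=1$, the inner shell at index distance $m$ contributes $-(1-p^{-n})p^{-(sq+n/q-s)m}\bigl(1-p^{-(n/q-s)m}\bigr)^{q-1}$. This is exactly $-p^{-(n/q-s)m}$ times the outer contribution $(1-p^{-n})p^{-sqm}\bigl(1-p^{-(n/q-s)m}\bigr)^{q-1}$. Each pair therefore sums to $(1-p^{-n})p^{-sqm}\bigl(1-p^{-(n/q-s)m}\bigr)^q$, and the series \eqref{eq:Hseries} appears at once, with no summation by parts.

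Your stated fallback would not have worked anyway. Evaluating $\G_{s,q}[u_N]/\W_{s,q}[u_N]$ on one sequence only bounds the best constant from above. It cannot prove \eqref{eq:mainnonlinear} for all $u$.
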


The results apply to complex-valued, not necessarily radial functions. Local constancy implies that a test function vanishing almost everywhere vanishes identically. Each inequality is therefore strict for nonzero $u\in\Sspace$.

Sharpness of the two $L^q$ inequalities is established using the radial sequence
\begin{equation}\label{eq:criticalseqintro}
 \begin{gathered}
 w(\bx)=\rn{\bx}^{-(n/q-s)},\qquad u_N=w\ind_{A_N},\\
 A_N=\{\bx:p^{-N}\le\rn{\bx}\le p^N\}=B_{-N}\setminus B_{N+1}.
 \end{gathered}
\end{equation}
Here $N$ is a positive integer and $B_k=p^k\Zp^n$. Since $sq+q(n/q-s)=n$, each shell contributes equally to $\W_{s,q}[u_N]$, so the Hardy term grows linearly with the number of shells. The linear truncation error is uniformly bounded in $L^q$; shell summation controls the boundary contributions to the quadratic and nonlinear energies. Dividing by the Hardy term gives convergence to the sharp constants.

The proofs use, respectively, a quadratic ground state representation, a pair of adjoint Schur identities, and Picone's inequality applied to the critical power function. The equality cases follow from the ground state representation, H\"older's inequality, and Picone's inequality, respectively. For $q=2$, Plancherel's identity and the jump-kernel formula relate the three results.

Section~\ref{sec:prelim} collects the shell and Riesz kernel formulas, the integral representation of the Vladimirov--Taibleson operator, Riesz inversion, and the necessity of the parameter range. Section~\ref{sec:L2} proves Theorem~\ref{thm:mainL2} and determines the critical coupling of the associated singular Schr\"odinger form. Sections~\ref{sec:schur} and \ref{sec:nonlinear} prove Theorems~\ref{thm:mainlinear} and \ref{thm:mainnonlinear}, respectively. The final subsection gives the relations for $q=2$.

\section{Preliminaries}\label{sec:prelim}
\subsection{The \texorpdfstring{$p$}{p}-adic field and the hierarchy of balls}\label{subsec:setting}
Fix a prime $p$. For a nonzero rational number $t=p^v a/b$, where $v\in\mathbb Z$ and $p\nmid ab$, set $|t|_p=p^{-v}$, and put $|0|_p=0$. The completion of $\mathbb Q$ with respect to this absolute value is denoted by $\Qp$. The $p$-adic valuation is $v_p(t)=-\log_p|t|_p$, with $v_p(0)=+\infty$. Every nonzero $t\in\Qp$ has a unique expansion
\[
 t=\sum_{j=v_p(t)}^\infty a_jp^j,
 \qquad a_j\in\{0,\ldots,p-1\},\quad a_{v_p(t)}\ne0.
\]
Write $\Zp=\{t\in\Qp:|t|_p\le1\}$, whose group of units is $\Zp^*=\{t\in\Qp:|t|_p=1\}$. For background on $p$-adic analysis, see \cite{Taibleson,VVZ,AKS}.

For $\bx=(x_1,\ldots,x_n)\in\Qpn$, define
\[
 \rn{\bx}=\max_{1\le i\le n}|x_i|_p,
 \qquad v_p(\bx)=\min_{1\le i\le n}v_p(x_i).
\]
Then $\rn{\bx}=p^{-v_p(\bx)}$ and $\rn{\bx+\by}\le\max\{\rn{\bx},\rn{\by}\}$. We use product Haar measure $\dd \bx=\dd x_1\cdots\dd x_n$, normalized by $|\Zp^n|=1$. For a measurable set $A$, its Haar measure is denoted by $|A|$ and its indicator by $\ind_A$. For $k\in\mathbb Z$ and $\boldsymbol a\in\Qpn$, set
\begin{equation*}
 B_k(\boldsymbol a)=\boldsymbol a+p^k\Zp^n
 =\{\bx\in\Qpn:\rn{\bx-\boldsymbol a}\le p^{-k}\},
 \qquad B_k=B_k(\boldsymbol0).
\end{equation*}
Thus $B_{k+1}\subset B_k$, with radius tending to zero as $k\to+\infty$. Set
\begin{equation*}
 S_k=B_k\setminus B_{k+1}
 =\{\bx\in\Qpn:v_p(\bx)=k\}
 =\{\bx\in\Qpn:\rn{\bx}=p^{-k}\}.
\end{equation*}
The index of $S_k$ is the common valuation of its points; thus $S_k$ lies outside $S_\ell$ when $k<\ell$.

\subsection{Function spaces and the Fourier transform}\label{subsec:Fourier}
\begin{definition}[Bruhat--Schwartz space]\label{def:Schwartz}
A function $u$ is locally constant if every $\bx\in\Qpn$ has a neighborhood $B_\ell(\bx)$ on which $u$ is constant. We use the test-function space
\[
 \Sspace=\{u:\Qpn\to\mathbb C:\ u\text{ is locally constant and compactly supported}\},
\]
equipped with its usual test-function topology. Its continuous dual $\Sp$ is the space of distributions.
\end{definition}

For $t\in\Qp$, the $p$-adic fractional part and the standard additive character are defined by
\[
 \{t\}_p=
 \begin{cases}
 0,&v_p(t)\ge0,\\
 \displaystyle\sum_{j=v_p(t)}^{-1}a_jp^j,&v_p(t)<0,
 \end{cases}
 \qquad \chi_p(t)=e^{2\pi i\{t\}_p}.
\]
In particular, the kernel of $\chi_p$ is $\Zp$. Write $\bx\cdot\bxi=\sum_{i=1}^n x_i\xi_i$ and set $\chi^p_{\bxi}(\bx)=\chi_p(-\bx\cdot\bxi)$. For $u\in L^1(\Qpn)$, define
\begin{equation*}
 \hat u(\bxi)
 =\int_{\Qpn}u(\bx)\overline{\chi^p_{\bxi}(\bx)}\dd \bx
 =\int_{\Qpn}u(\bx)\chi_p(\bx\cdot\bxi)\dd \bx.
\end{equation*}
For $u\in\Sspace$, Fourier inversion takes the form
\begin{equation*}
 u(\bx)=\int_{\Qpn}\hat u(\bxi)\chi_p(-\bx\cdot\bxi)\dd\bxi.
\end{equation*}
The Fourier transform maps $\Sspace$ onto itself and extends to a unitary operator on $L^2(\Qpn)$. We abbreviate $\|f\|_r=\|f\|_{L^r(\Qpn)}$ and use the inner product
\[
 \langle f,g\rangle_{L^2}
 =\int_{\Qpn}f(\bx)\overline{g(\bx)}\dd \bx
 =\int_{\Qpn}\hat f(\bxi)\overline{\hat g(\bxi)}\dd\bxi.
\]
Unless otherwise stated, single integrals are over $\Qpn$ and double integrals are over $\Qpn\times\Qpn$. In estimates, $C>0$ denotes a constant depending only on the fixed parameters; its value may change from line to line. Constants in the truncation estimates are independent of $N$. Values at the origin or on the diagonal do not affect the integrals. Identities involving power functions or kernels hold almost everywhere unless stated pointwise.

\begin{definition}[Vladimirov--Taibleson operator]\label{def:Taibleson}
For $a>0$, the operator $D^a$ is the nonnegative self-adjoint Fourier multiplier on $L^2(\Qpn)$ given by
\begin{equation*}
 \widehat{D^au}(\bxi)=\rn{\bxi}^a\hat u(\bxi),
 \qquad
 \mathcal D(D^a)=\{u\in L^2(\Qpn):\rn{\bxi}^a\hat u(\bxi)\in L^2(\Qpn)\}.
\end{equation*}
\end{definition}

For $t\ge0$, define the Sobolev space
\[
 H^t(\Qpn)=\left\{u\in L^2(\Qpn):
 \int_{\Qpn}\max\{1,\rn{\bxi}\}^{2t}|\hat u(\bxi)|^2\dd\bxi<\infty\right\}.
\]
Then $\mathcal D(D^a)=H^a(\Qpn)$, and the quadratic-form domain is $H^{a/2}(\Qpn)$. For functions outside $L^2$, we define the energy by the corresponding difference integral.

\begin{lemma}[Shell integration and the Fourier transform of a ball]\label{lem:shell}
The measures of balls and shells are $|B_k|=p^{-nk}$ and $|S_k|=(1-p^{-n})p^{-nk}$, respectively. If $\rn{\bx}\ne\rn{\by}$, then
\begin{equation}\label{eq:ultradistance}
 \rn{\bx-\by}=\max\{\rn{\bx},\rn{\by}\}.
\end{equation}
In particular, if $\bx\in S_k$, $\by\in S_\ell$, and $k\ne\ell$, then $\rn{\bx-\by}=p^{-\min\{k,\ell\}}$. For every integrable or nonnegative radial function $\bx\mapsto F(\rn{\bx})$,
\begin{equation}\label{eq:shellsum}
 \int_{\Qpn}F(\rn{\bx})\dd \bx=(1-p^{-n})\sum_{k\in\mathbb Z}p^{-nk}F(p^{-k}).
\end{equation}
Moreover,
\begin{equation}\label{eq:ballFourier}
 \widehat{\ind_{B_k}}(\bxi)=p^{-nk}\ind_{B_{-k}}(\bxi).
\end{equation}
\end{lemma}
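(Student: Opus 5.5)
The lemma has four parts: the measures of balls and shells, the ultrametric equality, the shell summation formula, and the Fourier transform of a ball indicator. I will prove them in that order. Each part uses only the product structure of Haar measure and the properties of $\chi_p$ recorded in Section~\ref{subsec:Fourier}.

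\textbf{Measures.} The map $\bx\mapsto p^k\bx$ scales each coordinate's Haar measure by $|p^k|_p=p^{-k}$, so it scales $n$-dimensional product measure by $p^{-nk}$. Since $B_k=p^k\Zp^n$ and $|\Zp^n|=1$, this gives $|B_k|=p^{-nk}$. Because $B_{k+1}\subset B_k$, we get $|S_k|=p^{-nk}-p^{-n(k+1)}=(1-p^{-n})p^{-nk}$.

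\textbf{Ultrametric equality.} Assume without loss of generality that $\rn{\bx}>\rn{\by}$. The strong triangle inequality gives $\rn{\bx-\by}\le\rn{\bx}$. For the reverse, the same inequality gives
\[
\rn{\bx}\le\max\{\rn{\bx-\by},\rn{\by}\},
\]
and the maximum cannot be attained by $\rn{\by}<\rn{\bx}$. Hence $\rn{\bx-\by}\ge\rn{\bx}$, which proves \eqref{eq:ultradistance}. The statement about $\bx\in S_k$ and $\by\in S_\ell$ with $k\ne\ell$ follows because $\rn{\bx}=p^{-k}$ and $\rn{\by}=p^{-\ell}$, so the maximum is $p^{-\min\{k,\ell\}}$.

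\textbf{Shell summation.} Up to the null set $\{\boldsymbol 0\}$, the space $\Qpn$ is the disjoint union $\bigsqcup_{k\in\mathbb Z}S_k$. The function $F(\rn{\bx})$ equals the constant $F(p^{-k})$ on $S_k$. For nonnegative $F$, countable additivity (monotone convergence) gives \eqref{eq:shellsum}. For integrable $F$, split into positive and negative parts (or real and imaginary parts) and use absolute convergence of the resulting sums.

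\textbf{Fourier transform of a ball.} This is the only step with any content. By the product structure,
\[
\widehat{\ind_{B_k}}(\bxi)=\prod_{i=1}^n\int_{p^k\Zp}\chi_p(x_i\xi_i)\dd x_i,
\]
so it suffices to treat $n=1$. Substituting $x=p^kt$ gives $p^{-k}\int_{\Zp}\chi_p(tp^k\xi)\dd t$. I will use the standard fact
\[
\int_{\Zp}\chi_p(t\eta)\dd t=\ind_{\Zp}(\eta).
\]
If $|\eta|_p\le1$, then $t\eta\in\Zp$ for all $t\in\Zp$, so the integrand is identically $1$, since $\Zp$ is the kernel of $\chi_p$. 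If $|\eta|_p>1$, then $t\mapsto\chi_p(t\eta)$ is a nontrivial continuous character of the compact group $\Zp$, so its integral vanishes. To see nontriviality, pick $t=\eta^{-1}p^{-1}\in\Zp$: then $\chi_p(t\eta)=e^{2\pi i/p}\ne1$. The vanishing follows from translation invariance: if $\chi_p(t_0\eta)\ne1$, then $I=\chi_p(t_0\eta)I$ forces $I=0$.

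Hence the one-dimensional transform is $p^{-k}\ind\{|p^k\xi|_p\le1\}=p^{-k}\ind_{p^{-k}\Zp}(\xi)$. Taking the product over coordinates gives $p^{-nk}\ind_{B_{-k}}(\bxi)$, which is \eqref{eq:ballFourier}.
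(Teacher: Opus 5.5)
Your proof is correct and follows the same route as the paper: Haar scaling for the measures, the strong triangle inequality for the distance formula, shell decomposition for the radial integral, and the ``character identically $1$ versus translation invariance'' argument for the Fourier transform of a ball. The one difference is that you factor the transform into one-dimensional integrals and give the explicit witness $t=\eta^{-1}p^{-1}\in\Zp$, with $\chi_p(t\eta)=e^{2\pi i/p}\ne1$; this supplies a detail the paper leaves implicit when it asserts that some $\bz\in B_k$ satisfies $\chi_p(\bz\cdot\bxi)\ne1$.
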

\begin{proof}
The measure formulas follow from Haar scaling, $\dd(a\bx)=|a|_p^n\dd \bx$. Suppose without loss of generality that $\rn{\bx}<\rn{\by}$. The ultrametric inequality gives $\rn{\bx-\by}\le\rn{\by}$. A strict inequality would contradict the ultrametric inequality applied to $\by=\bx-(\bx-\by)$, so \eqref{eq:ultradistance} follows. Decomposition into shells gives \eqref{eq:shellsum}.

If $\bxi\in B_{-k}$, then $\bx\cdot\bxi\in\Zp$ for every $\bx\in B_k$. The character is therefore identically $1$ on $B_k$, and its integral equals $p^{-nk}$. If $\bxi\notin B_{-k}$, choose $\bz\in B_k$ such that $\chi_p(\bz\cdot\bxi)\ne1$. Translation invariance shows that the integral equals itself multiplied by $\chi_p(\bz\cdot\bxi)$; hence it vanishes, proving \eqref{eq:ballFourier}.
\end{proof}

\begin{lemma}[Uniform local constancy of test functions]\label{lem:localconstant}
For every $u\in\Sspace$, there is an integer $\ell$ such that
\begin{equation}\label{eq:uniformlocal}
 u(\bx+\bz)=u(\bx)\qquad(\bx\in\Qpn,\ \bz\in B_\ell).
\end{equation}
Consequently, for all $a>0$ and $q\ge1$,
\[
 \iint\frac{|u(\bx)-u(\by)|^q}{\rn{\bx-\by}^{n+a}}\dd \bx\dd \by<\infty.
\]
If $0<a<n$, then $\int\rn{\bx}^{-a}|u(\bx)|^q\dd \bx<\infty$.
\end{lemma}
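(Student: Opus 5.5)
We first establish the uniform local constancy \eqref{eq:uniformlocal} by compactness. Since $u$ is compactly supported, fix $k_0$ with $\supp u\subset B_{k_0}$. Local constancy gives, for each $\bx\in B_{k_0}$, a ball $B_{\ell_{\bx}}(\bx)$ on which $u$ is constant. The ball $B_{k_0}$ is compact, and these balls form an open cover of it, so finitely many $B_{\ell_j}(\bx_j)$ cover $B_{k_0}$.

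Set $\ell=\max\{k_0,\max_j\ell_j\}$. Every ball of radius $p^{-\ell}$ meeting $B_{k_0}$ lies inside $B_{k_0}$, because $\ell\ge k_0$ and in an ultrametric space two balls are either nested or disjoint. Such a ball contains a point of some $B_{\ell_j}(\bx_j)$, and since $\ell\ge\ell_j$ it is contained in that larger ball. Hence $u$ is constant on every coset $\bx+B_\ell$ with $\bx\in B_{k_0}$.

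If instead $\bx\notin B_{k_0}$ and $\bz\in B_\ell\subset B_{k_0}$, then $\bx+\bz\notin B_{k_0}$ by the ultrametric inequality. In that case $u(\bx+\bz)=0=u(\bx)$, which proves \eqref{eq:uniformlocal}.

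For the energy bound, \eqref{eq:uniformlocal} shows that the integrand vanishes whenever $\rn{\bx-\by}\le p^{-\ell}$. So it suffices to integrate over $\rn{\bx-\by}>p^{-\ell}$, where we bound
\[
|u(\bx)-u(\by)|^q\le 2^{q-1}\bigl(|u(\bx)|^q+|u(\by)|^q\bigr).
\]
By symmetry the double integral is at most
\[
2^q\int|u(\bx)|^q\int_{\rn{\bz}>p^{-\ell}}\rn{\bz}^{-n-a}\dd\bz\dd\bx .
\]
The inner integral is computed with \eqref{eq:shellsum}:
\[
(1-p^{-n})\sum_{k<\ell}p^{-nk}p^{k(n+a)}=(1-p^{-n})\sum_{k<\ell}p^{ak}<\infty,
\]
which converges because $a>0$. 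Finally, $u$ is bounded, since it takes finitely many values on the finitely many cosets of $B_\ell$ in $B_{k_0}$, and it is compactly supported. Thus $\|u\|_q<\infty$, and the energy is finite.

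For the weighted integral, bound it by $\|u\|_\infty^q\int_{B_{k_0}}\rn{\bx}^{-a}\dd\bx$. By \eqref{eq:shellsum} this integral equals
\[
(1-p^{-n})\sum_{k\ge k_0}p^{-k(n-a)},
\]
which is finite exactly because $a<n$.

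The only step needing care is the compactness argument producing a single uniform $\ell$. The remaining steps are routine shell sums.
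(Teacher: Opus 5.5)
Your proof is correct and follows the paper's argument. A finite cover by balls of constancy gives a uniform radius $p^{-\ell}$; the energy then reduces to $2^q\|u\|_q^q$ times the tail integral $(1-p^{-n})\sum_{k<\ell}p^{ak}$, and the Hardy integral to a convergent shell sum near the origin. The only differences are cosmetic: you cover the whole ball $B_{k_0}\supset\supp u$ and handle exterior points by the ultrametric inequality, whereas the paper covers $\supp u$ itself and treats points outside the support by applying the conclusion at $\bx+\bz$.
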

\begin{proof}
Compactness of $\supp u$ gives a finite cover by balls on which $u$ is constant. Let $p^{-\ell}$ be the smallest radius among these balls. Then \eqref{eq:uniformlocal} holds for $\bx\in\supp u$. If $\bx\notin\supp u$ but $\bx+\bz\in\supp u$, applying this conclusion at $\bx+\bz$ would give $u(\bx)=u(\bx+\bz)\ne0$, a contradiction. Thus \eqref{eq:uniformlocal} holds on all of $\Qpn$.

Set $\by=\bx-\bz$. The difference vanishes for $\bz\in B_\ell$, while
\[
 \int|u(\bx)-u(\bx-\bz)|^q\dd \bx\le2^q\|u\|_q^q.
\]
Together with
\[
 \int_{\Qpn\setminus B_\ell}\rn{\bz}^{-n-a}\dd\bz
 =(1-p^{-n})\sum_{k<\ell}p^{ak}<\infty
\]
this proves finiteness of the difference integral. Since $u$ is bounded and compactly supported, finiteness of the Hardy integral follows from $(1-p^{-n})\sum_{k\ge0}p^{-(n-a)k}<\infty$ for $a<n$.
\end{proof}

\subsection{The Fourier transform and semigroup property of Riesz kernels}
The Gamma factor \eqref{eq:Gammadef} satisfies
\begin{equation}\label{eq:reflection}
 \GammaN(z)\GammaN(n-z)=1,\qquad 0<z<n.
\end{equation}
For $0<z<n$, set
\begin{equation*}
 k_z(\bx)=\frac{\rn{\bx}^{z-n}}{\GammaN(z)},\qquad \bx\ne\boldsymbol0.
\end{equation*}
The function $k_z$ is positive and locally integrable, and hence defines a distribution in $\Sp$.

\begin{lemma}[Fourier transform of the Riesz kernel]\label{lem:RieszFourier}
In $\Sp$,
\begin{equation*}
 \widehat{k_z}(\bxi)=\rn{\bxi}^{-z},\qquad 0<z<n.
\end{equation*}
\end{lemma}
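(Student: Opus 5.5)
We will show the identity by pairing $k_z$ with test functions. Since $\Sspace$ is spanned by the translates $\ind_{B_k(\boldsymbol a)}$ and translations correspond to multiplication by characters, the cleanest route is to verify the distributional identity $\langle\widehat{k_z},\varphi\rangle=\langle k_z,\hat\varphi\rangle$ for a convenient spanning family. Because $k_z$ is radial, we will first reduce to radial test functions. Averaging a test function over the compact group $GL_n(\Zp)$, which preserves $\rn{\cdot}$ and Haar measure and commutes with the Fourier transform up to the transpose action, leaves both sides unchanged. This leaves the indicators $\ind_{B_k}$. Concretely, we will prove
\[
 \int_{\Qpn}k_z(\bx)\,\widehat{\ind_{B_k}}(\bx)\dd\bx
 =\int_{\Qpn}\rn{\bxi}^{-z}\ind_{B_k}(\bxi)\dd\bxi,\qquad k\in\mathbb Z.
\]

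Both sides are computed by Lemma~\ref{lem:shell}. By \eqref{eq:ballFourier}, the left side equals $p^{-nk}\GammaN(z)^{-1}\int_{B_{-k}}\rn{\bx}^{z-n}\dd\bx$. By \eqref{eq:shellsum} this is
\[
 p^{-nk}\GammaN(z)^{-1}(1-p^{-n})\sum_{j\ge -k}p^{-zj}
 =\frac{p^{-nk}(1-p^{-n})p^{zk}}{\GammaN(z)(1-p^{-z})},
\]
which converges since $z>0$. The right side is $(1-p^{-n})\sum_{j\ge k}p^{-nj}p^{zj}=(1-p^{-n})p^{-(n-z)k}/(1-p^{z-n})$, which converges since $z<n$. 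By \eqref{eq:Gammadef}, $\GammaN(z)(1-p^{-z})=1-p^{z-n}$, so the two sides agree.

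It remains to justify the reduction to balls centred at $\boldsymbol0$. The alternative to averaging is a direct argument that is more elementary. Every $\varphi\in\Sspace$ is a finite combination of $\ind_{B_k(\boldsymbol a)}$ with a common $k$, and $\widehat{\ind_{B_k(\boldsymbol a)}}(\bx)=\chi_p(\boldsymbol a\cdot\bx)p^{-nk}\ind_{B_{-k}}(\bx)$. We then need
\[
 \int_{B_{-k}}k_z(\bx)\chi_p(\boldsymbol a\cdot\bx)\dd\bx\,p^{-nk}
 =\int_{B_k(\boldsymbol a)}\rn{\bxi}^{-z}\dd\bxi .
\]
If $\rn{\boldsymbol a}\le p^{-k}$, this is the centred case. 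Otherwise $\rn{\boldsymbol a}=p^{-m}$ with $m<k$, and the right side is $p^{-nk}\rn{\boldsymbol a}^{-z}$ by \eqref{eq:ultradistance}. The left side then needs the integrals of $\chi_p(\boldsymbol a\cdot\bx)$ over the shells $S_j$: this integral equals $|S_j|$ when $j\ge -m$, equals $-p^{-n}|B_{-m-1}|$-type boundary values when $j=-m-1$, and vanishes otherwise. These follow from \eqref{eq:ballFourier}, since $\ind_{S_j}=\ind_{B_j}-\ind_{B_{j+1}}$. Summing the geometric series and using \eqref{eq:Gammadef} again should give $\rn{\boldsymbol a}^{-z}$.

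This shell-character bookkeeping in the off-centre case is the main obstacle, and it is exactly where the normalization of $\GammaN$ must match. I would first try the $GL_n(\Zp)$-averaging route to avoid it. If that introduces subtleties, I would instead carry out the direct computation, which amounts to checking that $\widehat{k_z}$ and $\rn{\bxi}^{-z}$ integrate identically over every ball.
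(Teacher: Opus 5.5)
Your proposal is correct, but it runs in the opposite direction from the paper's proof.

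The paper decomposes the \emph{kernel}. It writes $\rn{\bx}^{z-n}=(1-p^{z-n})\sum_{k\in\mathbb Z}p^{(n-z)k}\ind_{B_k}(\bx)$, transforms term by term with \eqref{eq:ballFourier}, and resums to get $\GammaN(z)\rn{\bxi}^{-z}$. The only analytic point is justifying the termwise pairing, which follows from the decay $p^{-zk}$ and $p^{(n-z)k}$ of the paired terms. Because the pieces are centred balls, $\int\ind_{B_k}\hat\varphi=\int\widehat{\ind_{B_k}}\varphi$ holds for every $\varphi\in\Sspace$. So no reduction on the test-function side is needed, and the argument derives the transform rather than checking a guessed answer.

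You instead decompose the \emph{test function} and check the identity one ball at a time. Each check is a finite closed-form computation, and no infinite series is interchanged with a distributional pairing. The price is that non-centred balls must be handled.

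Your averaging route uses three facts you do not state:
\begin{itemize}
\item The $GL_n(\Zp)$-average of $\varphi$ is radial because $GL_n(\Zp)$ acts transitively on each shell $S_k$ (a vector with a unit coordinate extends to a basis of $\Zp^n$).
\item The average is again a test function, since $gB_\ell=B_\ell$.
\item Radial test functions are finite combinations of the $\ind_{B_k}$.
\end{itemize}

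Your fallback computation also closes. Take $\rn{\boldsymbol a}=p^{-m}$ with $m<k$. The shell $S_{-m-1}\subset B_{-k}$ gives $\int_{S_{-m-1}}\chi_p(\boldsymbol a\cdot\bx)\dd\bx=-p^{nm}$, and shells $S_j$ with $j\ge-m$ give $|S_j|$. Hence
\[
 \int_{B_{-k}}k_z(\bx)\chi_p(\boldsymbol a\cdot\bx)\dd\bx
 =\frac{1}{\GammaN(z)}\left(\frac{(1-p^{-n})p^{zm}}{1-p^{-z}}-p^{zm}p^{z-n}\right)
 =\frac{p^{zm}}{\GammaN(z)}\cdot\frac{1-p^{z-n}}{1-p^{-z}}=\rn{\boldsymbol a}^{-z},
\]
which matches $p^{nk}\int_{B_k(\boldsymbol a)}\rn{\bxi}^{-z}\dd\bxi$, as required.
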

\begin{proof}
If $\rn{\bx}=p^{-j}$, then $\ind_{B_k}(\bx)=1$ precisely when $k\le j$. Summing a geometric series gives
\[
 \rn{\bx}^{z-n}=(1-p^{z-n})\sum_{k\in\mathbb Z}p^{(n-z)k}\ind_{B_k}(\bx).
\]
After pairing with a test function, the terms as $k\to+\infty$ and $k\to-\infty$ are bounded by constant multiples of $p^{-zk}$ and $p^{(n-z)k}$, respectively. Termwise integration is therefore justified. By \eqref{eq:ballFourier},
\[
 \F(\rn{\bx}^{z-n})
 =(1-p^{z-n})\sum_{k\in\mathbb Z}p^{-zk}\ind_{B_{-k}}(\bxi).
\]
For $\rn{\bxi}=p^{-j}$, only the terms with $k\ge-j$ remain, and the sum equals
\[
 \frac{1-p^{z-n}}{1-p^{-z}}\,p^{zj}
 =\GammaN(z)\rn{\bxi}^{-z}.
\]
The right-hand side is locally integrable because $z<n$. The transformed series also converges absolutely when paired with a test function, so the identity holds in $\Sp$.
\end{proof}

\begin{lemma}[Riesz semigroup identity]\label{lem:semigroup}
Let $a,b>0$ and $a+b<n$. For every $\bx\ne\boldsymbol0$, the convolution $k_a*k_b$ converges absolutely and satisfies
\begin{equation}\label{eq:semigroup}
 (k_a*k_b)(\bx)=k_{a+b}(\bx).
\end{equation}
\end{lemma}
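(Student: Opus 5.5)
The plan is a direct shell computation. Absolute convergence and the pointwise formula both come out of decomposing $\Qpn$ according to $\rn{\by}$ and $\rn{\bx-\by}$, using the ultrametric identity \eqref{eq:ultradistance} and the shell measures of Lemma~\ref{lem:shell}.

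First I reduce to $\rn{\bx}=1$. The integrand $k_a(\bx-\by)k_b(\by)$ is nonnegative, so absolute convergence is the same as finiteness of the integral. The substitution $\by\mapsto p^j\by$ gives, for $\rn{\bx}=p^{-j}$,
\[
 (k_a*k_b)(p^j\bx)=p^{-j(a+b-n)}(k_a*k_b)(\bx),
\]
which is exactly the homogeneity of $k_{a+b}$. In fact the computation below uses only $\rn{\bx}$ and not the direction of $\bx$, so no rotation-invariance argument is needed. Fix $\rn{\bx}=1$ and split the $\by$-integral into three regions.
\begin{itemize}
 \item[(i)] $\rn{\by}<1$. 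Here $\rn{\bx-\by}=1$, and the contribution is $\GammaN(a)^{-1}\GammaN(b)^{-1}\int_{B_1}\rn{\by}^{b-n}\dd\by$. By \eqref{eq:shellsum} this equals $(1-p^{-n})\sum_{k\ge1}p^{-bk}$ times that prefactor, which is finite since $b>0$.
 \item[(ii)] $\rn{\by}>1$. Here $\rn{\bx-\by}=\rn{\by}$, and the integral is $(1-p^{-n})\sum_{k\le -1}p^{-nk}p^{-k(a+b-2n)}=(1-p^{-n})\sum_{k\le-1}p^{k(n-a-b)}$. This is finite precisely because $a+b<n$.
 \item[(iii)] $\rn{\by}=1$. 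Split further by $\rn{\bx-\by}=p^{-m}$ with $m\ge0$. For $m\ge1$, the set $\{\by:\rn{\bx-\by}=p^{-m}\}$ is $\bx-S_m$, lies inside $S_0$, and has measure $(1-p^{-n})p^{-nm}$. It contributes $(1-p^{-n})\sum_{m\ge1}p^{-am}$, which is finite since $a>0$. For $m=0$, the set $\{\by\in S_0:\rn{\bx-\by}=1\}$ is $S_0$ minus the disjoint balls $B_1$-translates $\bx+B_1$ (contained in $S_0$). Its measure is therefore $(1-p^{-n})-p^{-n}=1-2p^{-n}$, and there the integrand is constant.
\end{itemize}
Adding the four finite geometric sums gives $(k_a*k_b)(\bx)=c$ for an explicit constant $c$, and hence absolute convergence for every $\bx\ne\boldsymbol0$.

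The remaining and, I expect, main obstacle is the algebra: showing that
\[
 \frac{1}{\GammaN(a)\GammaN(b)}\Bigl[(1-p^{-n})\Bigl(\frac{p^{-b}}{1-p^{-b}}+\frac{p^{-(n-a-b)}}{1-p^{-(n-a-b)}}+\frac{p^{-a}}{1-p^{-a}}\Bigr)+1-2p^{-n}\Bigr]=\frac1{\GammaN(a+b)}.
\]
I would verify this by writing $X=p^{-a}$, $Y=p^{-b}$, $P=p^{-n}$, so that $\GammaN(a)=(1-X^{-1}P)/(1-X)$ and similarly for the others. Clearing denominators should reduce the identity to a polynomial identity in $X,Y,P$. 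As a cross-check, or as a fallback if the algebra becomes opaque, one can identify $c$ by Fourier analysis instead. Both sides are $c'\rn{\bx}^{a+b-n}$. Pairing with $\ind_{B_0}$ and using Fubini (legitimate by positivity) gives $\int (k_a*k_b)\ind_{B_0}=\langle k_a,k_b*\ind_{B_0}\rangle$. Computing $k_b*\ind_{B_0}$ and pairing it with $k_a$ through Lemma~\ref{lem:RieszFourier} and \eqref{eq:ballFourier} should yield $\int_{B_0}\rn{\bxi}^{-a-b}\dd\bxi$. Lemma~\ref{lem:RieszFourier} then gives the same value for $k_{a+b}$, which forces $c=1/\GammaN(a+b)$ and proves \eqref{eq:semigroup}.
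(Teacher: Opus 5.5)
Your proposal is correct and is essentially the paper's own proof. It uses the same scaling reduction to $\rn{\bx}=1$ and the same decomposition: your region (iii) with $m\ge1$ is the paper's ball $\bx+B_1$, and your $m=0$ piece is its set of measure $1-2p^{-n}$. It also produces the same three geometric series and ends with the same closing identity, which the paper likewise states without expanding. That identity does hold: with $A=p^a$, $B=p^b$, $N=p^n$, multiplying both the bracket and $\GammaN(a)\GammaN(b)/\GammaN(a+b)$ by $N(A-1)(B-1)(N-AB)$ turns each into $(N-A)(N-B)(AB-1)$, so your Fourier fallback is unnecessary (it would in any case need extra justification for the pairing $\langle k_a,k_b*\ind_{B_0}\rangle$, since neither factor is a test function).
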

\begin{proof}
First suppose $\rn{\bx}=1$. Decompose the integral
\[
 \int_{\Qpn}\rn{\bx-\by}^{a-n}\rn{\by}^{b-n}\dd \by
\]
into the regions $\rn{\by}<1$, $\rn{\bx-\by}<1$, $\rn{\by}>1$, and $\rn{\by}=\rn{\bx-\by}=1$. The first two regions are the disjoint balls $B_1$ and $\bx+B_1$, and the fourth has measure $1-2p^{-n}$. By \eqref{eq:ultradistance} and shell decomposition,
\begin{align*}
 \int_{\Qpn}\rn{\bx-\by}^{a-n}\rn{\by}^{b-n}\dd \by&=(1-p^{-n})\left(\sum_{m\ge1}p^{-bm}+\sum_{m\ge1}p^{-am}
   +\sum_{m\ge1}p^{-(n-a-b)m}\right)+1-2p^{-n}\\
 &=\frac{1-p^{-n}}{p^b-1}+\frac{1-p^{-n}}{p^a-1}
   +\frac{1-p^{-n}}{p^{n-a-b}-1}+1-2p^{-n}\\
 &=\frac{(1-p^{a-n})(1-p^{b-n})(1-p^{-a-b})}
 {(1-p^{-a})(1-p^{-b})(1-p^{a+b-n})}\\
 &=\frac{\GammaN(a)\GammaN(b)}{\GammaN(a+b)}.
\end{align*}
The last equality uses \eqref{eq:Gammadef}. The three series converge because $b>0$, $a>0$, and $a+b<n$, respectively.

For a general point with $\rn{\bx}=p^{-j}$, write $\bx=p^j\bx_0$ and $\by=p^j\bz$, where $\rn{\bx_0}=1$. Scaling contributes $p^{-(a+b-n)j}$, and the calculation applies to every $\rn{\bx_0}=1$. Hence
\[
 \int_{\Qpn}\rn{\bx-\by}^{a-n}\rn{\by}^{b-n}\dd \by
 =\frac{\GammaN(a)\GammaN(b)}{\GammaN(a+b)}\rn{\bx}^{a+b-n}.
\]
Dividing by $\GammaN(a)\GammaN(b)$ proves \eqref{eq:semigroup}.
\end{proof}

\subsection{Hypersingular integrals and power functions}
\begin{proposition}[Integral representation of the Vladimirov--Taibleson operator]\label{prop:integral}
For $a>0$ and $u\in\Sspace$,
\begin{equation}\label{eq:Taiblesonintegral}
 \begin{aligned}
 D^au(\bx)
 &=\frac{1-p^a}{1-p^{-a-n}}
   \int_{\Qpn}\frac{u(\by)-u(\bx)}{\rn{\by-\bx}^{n+a}}\dd \by\\
 &=C_{n,a}\int_{\Qpn}\frac{u(\bx)-u(\by)}{\rn{\bx-\by}^{n+a}}\dd \by.
 \end{aligned}
\end{equation}
The integrals converge absolutely at every $\bx$. Moreover, $D^au\in L^1\cap L^\infty$, and hence $D^au\in L^r$ for $1\le r\le\infty$. If $\supp u\subset B_K$, then
\begin{equation}\label{eq:Dstail}
 D^au(\bx)=-C_{n,a}\rn{\bx}^{-n-a}\int_{\Qpn}u(\by)\dd \by,
 \qquad \rn{\bx}>p^{-K}.
\end{equation}
In addition, $\int D^au\dd \bx=0$.
\end{proposition}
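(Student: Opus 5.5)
Fix $u\in\Sspace$ and use the uniform local constancy of Lemma~\ref{lem:localconstant}: choose $\ell$ with $u(\bx-\bz)=u(\bx)$ for all $\bz\in B_\ell$. Then for every $\bx$ the integrand $(u(\bx)-u(\by))\rn{\bx-\by}^{-n-a}$ vanishes on $\bx+B_\ell$. Outside that ball it is bounded by $2\|u\|_\infty\rn{\bx-\by}^{-n-a}$, and
\[
 \int_{\rn{\bz}>p^{-\ell}}\rn{\bz}^{-n-a}\dd\bz=(1-p^{-n})\sum_{k<\ell}p^{ak}<\infty .
\]
Hence the integral converges absolutely at every $\bx$, and the resulting function is bounded. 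The two displayed forms in \eqref{eq:Taiblesonintegral} agree because $\frac{1-p^a}{1-p^{-a-n}}=-C_{n,a}$ by \eqref{eq:kernelconstant}.

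To identify this integral with $D^au$, compute it on indicators of balls and extend by linearity. Every $u\in\Sspace$ is a finite linear combination of indicators $\ind_{B_k(\boldsymbol b)}$, and both sides commute with translations. So it suffices to treat $u=\ind_{B_k}$.

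\emph{Fourier side.} By \eqref{eq:ballFourier}, $\widehat{D^a\ind_{B_k}}(\bxi)=p^{-nk}\rn{\bxi}^a\ind_{B_{-k}}(\bxi)$. Inverting shell by shell, using $\ind_{S_j}=\ind_{B_j}-\ind_{B_{j+1}}$ together with \eqref{eq:ballFourier} again, gives an explicit radial function. It equals a constant $c_k$ on $B_k$, and it equals a multiple of $\rn{\bx}^{-n-a}$ for $\rn{\bx}>p^{-k}$. This inversion is a finite geometric sum.

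\emph{Integral side.} For $\bx\notin B_k$, \eqref{eq:ultradistance} gives $\rn{\bx-\by}=\rn{\bx}$ for all $\by\in B_k$. The integral is therefore $-C_{n,a}p^{-nk}\rn{\bx}^{-n-a}$. For $\bx\in B_k$, it equals $C_{n,a}\int_{\rn{\bz}>p^{-k}}\rn{\bz}^{-n-a}\dd\bz$, which is computed by \eqref{eq:shellsum}.

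Matching the two sides is a routine but fiddly check of geometric series. I expect it to be the main obstacle, since the sign and the value of $C_{n,a}$ must come out exactly. To streamline it, one can instead check that both sides have the same Fourier transform. The map $u\mapsto\int(u(\bx)-u(\by))\rn{\bx-\by}^{-n-a}\dd\by$ is a convolution-type operator. Applied to a character, it gives the multiplier
\[
 m(\bxi)=\int\bigl(1-\chi_p(\bz\cdot\bxi)\bigr)\rn{\bz}^{-n-a}\dd\bz .
\]
Computing shell by shell with \eqref{eq:ballFourier} shows $m(\bxi)=\rn{\bxi}^a/C_{n,a}$ for $\bxi\neq\boldsymbol 0$. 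Fubini is justified on test functions because of the cutoff at radius $p^{-\ell}$.

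The tail formula \eqref{eq:Dstail} follows from the same ultrametric observation. If $\supp u\subset B_K$ and $\rn{\bx}>p^{-K}$, then $u(\bx)=0$ and $\rn{\bx-\by}=\rn{\bx}$ on the support of $u$. Hence
\[
 D^au(\bx)=-C_{n,a}\rn{\bx}^{-n-a}\int u .
\]
This tail is integrable at infinity, since the shell sum $\sum_{k<-K}p^{-nk}p^{(n+a)k}$ converges. On the compact set $B_K$, $D^au$ is bounded. Thus $D^au\in L^1\cap L^\infty$, and interpolation gives all $L^r$.

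Finally, $D^au\in L^1$, so its Fourier transform is continuous. Its value at $\bxi=\boldsymbol0$ is $\rn{\boldsymbol0}^a\hat u(\boldsymbol0)=0$. This value is exactly $\int D^au\dd\bx$, so $\int D^au\dd\bx=0$.
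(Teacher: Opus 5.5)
Your proposal is correct and follows essentially the paper's proof. Absolute convergence and boundedness come from the uniform local-constancy cutoff. You identify the integral with $D^au$ via the multiplier $\int(1-\chi_p(\bz\cdot\bxi))\rn{\bz}^{-n-a}\dd\bz=\rn{\bxi}^a/C_{n,a}$, computed shell by shell with Fubini justified by the cutoff, and the tail formula follows from \eqref{eq:ultradistance}.

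The differences are minor. You obtain $L^1$ from the explicit tail plus boundedness, rather than from the paper's bound $2\|u\|_1\int_{\Qpn\setminus B_\ell}\rn{\bz}^{-n-a}\dd\bz$. Your ball-indicator detour is unnecessary; also, its inversion is an infinite convergent geometric series, not a finite one.
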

\begin{proof}
By Lemma~\ref{lem:localconstant}, there is an integer $\ell$ such that the numerator vanishes whenever $\rn{\bx-\by}\le p^{-\ell}$. The kernel is integrable on the remaining region, so the integral defines a bounded function. Furthermore,
\[
 \int_{\Qpn}\!\int_{\Qpn}
 \frac{|u(\bx)-u(\bx-\bz)|}{\rn{\bz}^{n+a}}\dd\bz\dd \bx
 \le2\|u\|_1\int_{\Qpn\setminus B_\ell}\rn{\bz}^{-n-a}\dd\bz<\infty,
\]
so the function belongs to $L^1$ and Fubini's theorem applies when computing its Fourier transform.

For $\rn{\bxi}=1$, formula \eqref{eq:ballFourier} gives
\[
 \int\frac{1-\chi_p(\bz\cdot\bxi)}{\rn{\bz}^{n+a}}\dd\bz
 =p^{-a}+(1-p^{-n})\sum_{m=2}^{\infty}p^{-am}
 =\frac{1-p^{-n-a}}{p^a-1}.
\]
Indeed, the integral of the character over $S_k$ is $-1$ when $k=-1$ and zero when $k\le-2$, while the character is identically $1$ when $k\ge0$. The series above is indexed by $m=-k\ge2$. For general $\bxi\ne\boldsymbol0$, scaling contributes the factor $\rn{\bxi}^a$; at $\bxi=\boldsymbol0$, the integral vanishes. Multiplication by $C_{n,a}\hat u(\bxi)$ therefore gives the Fourier multiplier $\rn{\bxi}^a\hat u(\bxi)$, proving \eqref{eq:Taiblesonintegral}. Evaluating at $\bxi=\boldsymbol0$ also gives $\int D^au\dd \bx=0$.

If $\rn{\bx}>p^{-K}$, then $u(\bx)=0$ and $\rn{\bx-\by}=\rn{\bx}$ for every $\by\in\supp u$. Substitution in the integral representation gives \eqref{eq:Dstail}.
\end{proof}

\begin{corollary}[Jump-kernel formula for the quadratic form]\label{cor:quadratic}
For $a>0$ and $u\in\Sspace$,
\begin{equation}\label{eq:quadratic}
 \E_a[u]=\int_{\Qpn}\rn{\bxi}^a|\hat u(\bxi)|^2\dd\bxi
 =\frac{C_{n,a}}2\iint\frac{|u(\bx)-u(\by)|^2}{\rn{\bx-\by}^{n+a}}\dd \bx\dd \by.
\end{equation}
\end{corollary}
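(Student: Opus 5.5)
The first equality in \eqref{eq:quadratic} is Plancherel applied to Definition~\ref{def:Taibleson}. Since $u\in\Sspace$, $\hat u\in\Sspace$ and $\rn{\bxi}^a\hat u(\bxi)\in L^2$, so $u\in\mathcal D(D^a)$. Because $\widehat{D^au}=\rn{\bxi}^a\hat u$, Plancherel gives
\[
 \E_a[u]=\langle D^au,u\rangle_{L^2}=\int\rn{\bxi}^a|\hat u(\bxi)|^2\dd\bxi .
\]

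For the second equality I will use the pointwise integral representation \eqref{eq:Taiblesonintegral} from Proposition~\ref{prop:integral}. Pairing with $\overline{u}$ gives
\[
 \E_a[u]=C_{n,a}\int\overline{u(\bx)}\int\frac{u(\bx)-u(\by)}{\rn{\bx-\by}^{n+a}}\dd\by\dd\bx .
\]
I then symmetrize in $(\bx,\by)$. Swapping the variables produces the same double integral with integrand $-\overline{u(\by)}(u(\bx)-u(\by))\rn{\bx-\by}^{-n-a}$. Averaging the two expressions gives
\[
 \E_a[u]=\frac{C_{n,a}}2\iint\frac{(u(\bx)-u(\by))\,\overline{(u(\bx)-u(\by))}}{\rn{\bx-\by}^{n+a}}\dd\bx\dd\by,
\]
which is the claimed formula.

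The only real point is justifying Fubini and the change of variables, and this is the step requiring care. By Lemma~\ref{lem:localconstant}, there is $\ell$ such that the numerator vanishes on $\rn{\bx-\by}\le p^{-\ell}$. Hence
\[
 \iint\frac{|u(\bx)|\,|u(\bx)-u(\by)|}{\rn{\bx-\by}^{n+a}}\dd\bx\dd\by
 \le 2\|u\|_\infty\|u\|_1\int_{\Qpn\setminus B_\ell}\rn{\bz}^{-n-a}\dd\bz<\infty .
\]
This bound uses $|u(\bx)|\,|u(\bx)-u(\bx-\bz)|\le |u(\bx)|^2+|u(\bx)||u(\bx-\bz)|$ and integrates in $\bx$ first, with both terms bounded by $\|u\|_\infty\|u\|_1$. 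The double integral therefore converges absolutely, so the swap of variables and the splitting into two halves are legitimate.

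Alternatively, one could work on the Fourier side and expand $|u(\bx)-u(\bx-\bz)|^2$ via Plancherel as $\int|1-\chi_p(\bz\cdot\bxi)|^2|\hat u|^2\dd\bxi$. The symbol computation from the proof of Proposition~\ref{prop:integral} then gives
\[
 \int\frac{2-2\operatorname{Re}\chi_p(\bz\cdot\bxi)}{\rn{\bz}^{n+a}}\dd\bz=\frac{2\rn{\bxi}^a}{C_{n,a}}.
\]
Tonelli's theorem applies there since all integrands are nonnegative, and this gives the same identity. I would present the first route as the main proof.
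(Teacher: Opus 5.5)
Your proposal is correct and follows the paper's proof exactly. You apply Plancherel for the first equality, then pair the integral representation \eqref{eq:Taiblesonintegral} with $\overline{u}$, swap variables, and average, with absolute integrability coming from Lemma~\ref{lem:localconstant}; your explicit bound $2\|u\|_\infty\|u\|_1\int_{\Qpn\setminus B_\ell}\rn{\bz}^{-n-a}\dd\bz$ just spells out the integrability check that the paper delegates to the proof of Proposition~\ref{prop:integral}.
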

\begin{proof}
The first equality is Plancherel's identity. For the second, multiply \eqref{eq:Taiblesonintegral} by $\overline{u(\bx)}$, integrate, interchange $\bx$ and $\by$, and average the two expressions. The required absolute integrability follows from Lemma~\ref{lem:localconstant} and the proof of Proposition~\ref{prop:integral}.
\end{proof}

\begin{lemma}[Action on power functions]\label{lem:power}
Let $a>0$ and $0<\delta<n-a$. Substituting $u(\bx)=\rn{\bx}^{-\delta}$ into \eqref{eq:Taiblesonintegral} gives an absolutely convergent integral at every $\bx\ne\boldsymbol0$, and
\begin{equation}\label{eq:power}
 D^a\bigl(\rn{\cdot}^{-\delta}\bigr)(\bx)
 =\frac{\GammaN(n-\delta)}{\GammaN(n-\delta-a)}
 \rn{\bx}^{-\delta-a}.
\end{equation}
Equivalently, if $0<\eta-a<\eta<n$, then
\[
 D^a\rn{\bx}^{\eta-n}
 =\frac{\GammaN(\eta)}{\GammaN(\eta-a)}\rn{\bx}^{\eta-a-n}.
\]
\end{lemma}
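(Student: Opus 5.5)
By homogeneity and scaling, it suffices to compute the integral at a single point $\bx$ with $\rn{\bx}=1$ and check that it equals $\GammaN(n-\delta)/\GammaN(n-\delta-a)$. The general case then follows by writing $\bx=p^j\bx_0$ and $\by=p^j\bz$. Haar scaling gives a factor $p^{-nj}$, the kernel contributes $p^{(n+a)j}$, and the numerator contributes $p^{\delta j}$. Together these produce exactly $\rn{\bx}^{-\delta-a}$.

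Fix $\rn{\bx}=1$ and split the $\by$-integral of $C_{n,a}(1-\rn{\by}^{-\delta})\rn{\bx-\by}^{-n-a}$ as in the proof of Lemma~\ref{lem:semigroup}. There are four regions.

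\begin{itemize}
\item[(i)] $\rn{\by}<1$, i.e.\ $\by\in S_m$ with $m\ge1$. Here $\rn{\bx-\by}=1$, and the contribution is $(1-p^{-n})\sum_{m\ge1}p^{-nm}(1-p^{\delta m})$. This converges because $\delta<n$.
\item[(ii)] $\rn{\by}=1$. The numerator vanishes, so there is no contribution. This also disposes of the region $\by\in\bx+B_1$, which is the only place where the kernel is singular, so absolute convergence there is automatic.
\item[(iii)] $\rn{\by}=p^{m}$ with $m\ge1$. Here $\rn{\bx-\by}=p^m$ by \eqref{eq:ultradistance}, and the contribution is $(1-p^{-n})\sum_{m\ge1}p^{nm}p^{-(n+a)m}(1-p^{-\delta m})$. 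This converges because $a>0$.
\end{itemize}

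Absolute convergence thus holds under $a>0$ and $\delta<n$. The stronger hypothesis $\delta<n-a$ is needed only so that the Gamma factors in the final formula are positive and the right-hand side has the stated form.

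Summing the geometric series gives
\[
 C_{n,a}(1-p^{-n})\left[\frac{1}{p^n-1}-\frac{1}{p^{n-\delta}-1}+\frac{1}{p^a-1}-\frac{1}{p^{a+\delta}-1}\right].
\]
The remaining task is to simplify this and show it equals
\[
 \frac{\GammaN(n-\delta)}{\GammaN(n-\delta-a)}
 =\frac{(1-p^{-\delta})(1-p^{-(n-\delta-a)})}{(1-p^{-(n-\delta)})(1-p^{-\delta-a})}.
\]
This algebraic simplification is the main, though routine, obstacle.

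To carry it out, combine the pairs as
\[
 \frac{1}{p^n-1}-\frac{1}{p^{n-\delta}-1}=\frac{p^{n-\delta}-p^n}{(p^n-1)(p^{n-\delta}-1)},
 \qquad
 \frac{1}{p^a-1}-\frac{1}{p^{a+\delta}-1}=\frac{p^{a+\delta}-p^a}{(p^a-1)(p^{a+\delta}-1)}.
\]
Multiply by $C_{n,a}(1-p^{-n})=(p^a-1)(1-p^{-n})/(1-p^{-n-a})$; the factors $p^n-1$ and $p^a-1$ cancel against the denominators. Factoring out $p^\delta-1$ should then leave an expression that reduces to $(1-p^{-n+\delta+a})$ over $(1-p^{-n+\delta})(1-p^{-\delta-a})$, times the appropriate powers. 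As a sanity check, the expression vanishes at $\delta=0$, consistent with $D^a1=0$.

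Finally, the equivalent form follows by substituting $\delta=n-\eta$. The condition $0<\delta<n-a$ becomes $a<\eta<n$, i.e.\ $0<\eta-a<\eta<n$.
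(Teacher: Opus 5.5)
Your proposal is correct and follows the paper's proof essentially step for step. It uses the same shell decomposition at $\rn{\bx}=1$ (inner shells, the unit shell where the integrand vanishes, outer shells), the same four geometric sums multiplied by $C_{n,a}(1-p^{-n})$, and the same scaling $\bx=p^j\bx_0$, $\by=p^j\bz$. The algebra you left unfinished does close as you predicted: after cancelling $p^n-1$ and $p^a-1$ and factoring out $p^\delta-1$, the remaining bracket equals $(p^{n+a}-1)(p^{-\delta}-p^{a-n})/[(p^{a+\delta}-1)(p^{n-\delta}-1)]$. Since $(p^{n+a}-1)/(1-p^{-n-a})=p^{n+a}$, this gives exactly $(1-p^{-\delta})(1-p^{-(n-\delta-a)})/[(1-p^{-(n-\delta)})(1-p^{-\delta-a})]$.
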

\begin{proof}
First let $\rn{\bx}=1$. If $\by$ lies on the same shell as $\bx$, then $\rn{\by}^{-\delta}=\rn{\bx}^{-\delta}=1$, so the integrand vanishes. The outer and inner shells contribute, respectively,
\[
 (1-p^{-n})\sum_{m\ge1}p^{-am}(1-p^{-\delta m}),\qquad
 (1-p^{-n})\sum_{m\ge1}p^{-nm}(1-p^{\delta m}).
\]
Both series converge absolutely because $a>0$ and $\delta<n$. Adding them and multiplying by $C_{n,a}$ gives
\begin{align*}
 D^a\bigl(\rn{\cdot}^{-\delta}\bigr)(\bx)
 &=C_{n,a}(1-p^{-n})\left(
 \frac1{p^a-1}-\frac1{p^{a+\delta}-1}
 -\frac1{p^{n-\delta}-1}+\frac1{p^n-1}\right)\\
 &=\frac{(1-p^{-\delta})(1-p^{-(n-\delta-a)})}
 {(1-p^{-(n-\delta)})(1-p^{-(a+\delta)})}
 =\frac{\GammaN(n-\delta)}{\GammaN(n-\delta-a)}.
\end{align*}
The last two equalities follow from \eqref{eq:kernelconstant} and \eqref{eq:Gammadef}. For general $\bx\ne\boldsymbol0$, write $\bx=p^j\bx_0$, where $\rn{\bx_0}=1$, and set $\by=p^j\bz$. The difference, kernel, and measure contribute the factors $p^{\delta j}$, $p^{(n+a)j}$, and $p^{-nj}$, respectively. Their product is $p^{(\delta+a)j}=\rn{\bx}^{-\delta-a}$, which proves \eqref{eq:power}.

One can also derive \eqref{eq:power} by Fourier transformation. By Lemma~\ref{lem:RieszFourier}, the Fourier transform of $\rn{\cdot}^{-\delta}$ is $\GammaN(n-\delta)\rn{\bxi}^{\delta-n}$. Multiply by $\rn{\bxi}^a$ and apply the same lemma with parameter $n-\delta-a\in(0,n)$. All the power kernels involved are locally integrable, so the formula also holds in the sense of distributions.
\end{proof}

\subsection{Riesz inversion and the parameter range}
\begin{lemma}[Riesz inversion for test functions]\label{lem:inverse}
Let $0<s<n$, $u\in\Sspace$, and $f=D^su$. Then
\begin{equation*}
 I_sf(\bx):=(k_s*f)(\bx)
\end{equation*}
converges absolutely at every $\bx$, and $I_sD^su=u$ almost everywhere.
\end{lemma}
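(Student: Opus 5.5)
The plan is to prove absolute convergence first, using the tail formula \eqref{eq:Dstail}, and then to identify $I_sD^su$ with $u$ on the Fourier side, using Lemma~\ref{lem:RieszFourier} and Definition~\ref{def:Taibleson}.

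\emph{Absolute convergence.} Fix $\bx$ and suppose $\supp u\subset B_K$. By Proposition~\ref{prop:integral}, $f=D^su$ is bounded. For $\rn{\by}>p^{-K}$ we have $|f(\by)|\le C\rn{\by}^{-n-s}$ by \eqref{eq:Dstail}. We split
\[
 \int k_s(\bx-\by)|f(\by)|\dd\by
\]
into two regions. On a large ball $B_{-R}$ containing $\bx$ and $B_K$, the kernel $k_s(\bx-\cdot)$ is locally integrable because $s>0$ (the singularity $\rn{\bx-\by}^{s-n}$ is integrable), and $f$ is bounded there. Outside that ball, $\rn{\bx-\by}=\rn{\by}$ by \eqref{eq:ultradistance}. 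The integrand is then at most $C\rn{\by}^{s-n}\rn{\by}^{-n-s}=C\rn{\by}^{-2n}$, which is integrable at infinity by \eqref{eq:shellsum}. This gives absolute convergence at every $\bx$, including $\bx=\boldsymbol0$. It also shows that $I_sf$ is bounded on compact sets and is $O(\rn{\bx}^{s-n})$ at infinity, so $I_sf$ is locally integrable and of moderate growth. Hence $I_sf$ defines an element of $\Sp$, and pointwise convolution coincides with distributional convolution when paired with test functions, by Fubini.

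\emph{Identification.} Let $\varphi\in\Sspace$. By Fubini, which the absolute bound justifies,
\[
 \int I_sf\,\varphi=\int f\,(\tilde k_s*\varphi).
\]
Here $\tilde k_s=k_s$ because $k_s$ is radial and even. Next we compute this pairing on the Fourier side. We have $\hat f=\rn{\bxi}^s\hat u$, with $\hat u\in\Sspace$. By Lemma~\ref{lem:RieszFourier}, the distribution $k_s*\varphi$ has Fourier transform $\rn{\bxi}^{-s}\hat\varphi$, which is locally integrable since $s<n$. Parseval's identity, in the form $\int f g=\int\hat f(\bxi)\,\hat g(-\bxi)\,\dd\bxi$ up to the paper's convention, then gives
\[
 \int\rn{\bxi}^{s}\hat u(\bxi)\,\rn{\bxi}^{-s}\hat\varphi(-\bxi)\,\dd\bxi=\int u\varphi.
\]
The factors $\rn{\bxi}^{\pm s}$ cancel away from $\bxi=\boldsymbol0$, which is a null set. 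Hence $I_sf=u$ as distributions. Two locally integrable functions that agree as distributions agree almost everywhere, which proves the lemma.

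\emph{Main obstacle.} The delicate point is justifying Parseval's identity for the pair $f$ and $k_s*\varphi$. Here $f\in L^1\cap L^\infty$, but $k_s*\varphi$ is not a test function: it decays only like $\rn{\bx}^{s-n}$. I would avoid this issue by using the layer expansion from the proof of Lemma~\ref{lem:RieszFourier}. That lemma writes $k_s$ as a sum of multiples of $\ind_{B_k}$. For each $k$, the identity
\[
 \int f\,(\ind_{B_k}*\varphi)=\int\hat f\cdot\overline{(\cdots)}
\]
is an honest Plancherel identity between $L^2$ functions, since $\ind_{B_k}*\varphi\in\Sspace$. It then remains to sum over $k$. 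Absolute summability follows from the bounds $p^{-sk}$ and $p^{(n-s)k}$ used in that proof, together with the boundedness of $f$, the fact that $\hat f=\rn{\bxi}^s\hat u$ has compact support, and $\hat f(\boldsymbol0)=\int D^su=0$. The last property makes the sum near $\bxi=\boldsymbol0$ harmless, and dominated convergence then completes the argument.
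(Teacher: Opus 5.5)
Your proof is correct. Its skeleton matches the paper's: absolute convergence, then pairing with $\varphi\in\Sspace$ and passing to the Fourier side. You execute both steps differently.

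\textbf{Absolute convergence.} The paper uses only $f\in L^1\cap L^\infty$. Splitting $k_s$ at $\rn{\bz}=1$ gives $\int k_s(\bx-\by)|f(\by)|\dd\by\le\|f\|_\infty\int_{B_0}k_s+\|f\|_1/\GammaN(s)$, uniformly in $\bx$, which makes the later Fubini step immediate. Your use of \eqref{eq:Dstail} gives local boundedness plus decay $O(\rn{\bx}^{s-n})$, which is more than is needed.

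\textbf{Identification.} The obstacle you flag disappears once you note that $\rn{\bxi}^{-s}\hat\varphi$ is not just locally integrable but integrable: it has compact support and $s<n$. The paper therefore treats $k_s*\varphi$ as the inverse Fourier transform of an $L^1$ function. Then $\int f\,(k_s*\varphi)=\int\hat f(-\bxi)\rn{\bxi}^{-s}\hat\varphi(\bxi)\dd\bxi$ follows from Fubini with $f\in L^1$ alone. No Plancherel is needed, and no use of $\hat f(\boldsymbol0)=0$.

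Your layer route via $k_s=(1-p^{-s})\sum_k p^{(n-s)k}\ind_{B_k}$ also works. On the spatial side, dominated convergence follows from Tonelli and finiteness of $\int|f|\,(k_s*|\varphi|)$. On the frequency side, the $k$th term is $(1-p^{-s})p^{-sk}\int_{B_{-k}}\hat f(-\bxi)\hat\varphi(\bxi)\dd\bxi$. This is $O(p^{-sk})$ as $k\to+\infty$ and $O(p^{(n-s)k})$ as $k\to-\infty$. So $s<n$ already suffices, and $\hat f(\boldsymbol0)=0$ is superfluous.

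What your route buys is that it sums $\widehat{\ind_{B_k}*\varphi}=p^{-nk}\ind_{B_{-k}}\hat\varphi$ to $\rn{\bxi}^{-s}\hat\varphi$ by hand. You therefore never need the distributional convolution theorem that the paper invokes implicitly when it writes $k_s*\varphi=\F^{-1}\bigl(\rn{\bxi}^{-s}\hat\varphi\bigr)$. The price is two interchange arguments in place of a single application of Fubini.
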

\begin{proof}
Proposition~\ref{prop:integral} gives $f\in L^1\cap L^\infty$. Since $k_s$ is integrable near the origin and bounded for $\rn{\bz}>1$,
\[
 \int k_s(\bx-\by)|f(\by)|\dd \by
 \le\|f\|_\infty\int_{B_0}k_s(\bz)\dd\bz
     +\frac{\|f\|_1}{\GammaN(s)}<\infty.
\]
The bound is independent of $\bx$, so $I_sf$ is bounded.

For $\varphi\in\Sspace$, Lemma~\ref{lem:RieszFourier} gives
\[
 I_s\varphi=\F^{-1}\big(\rn{\bxi}^{-s}\hat\varphi(\bxi)\big).
\]
Here $\rn{\bxi}^{-s}\hat\varphi(\bxi)$ has compact support and belongs to $L^1$ because $s<n$. Also, $I_s|\varphi|$ is bounded, and hence
\[
 \int\!\int |f(\by)|k_s(\bx-\by)|\varphi(\bx)|\dd \bx\dd \by<\infty.
\]
Symmetry of the kernel and Fourier inversion now give
\begin{align*}
 \int I_sf(\bx)\varphi(\bx)\dd \bx
 &=\int f(\by)I_s\varphi(\by)\dd \by\\
 &=\int\hat f(-\bxi)\rn{\bxi}^{-s}\hat\varphi(\bxi)\dd\bxi\\
 &=\int\hat u(-\bxi)\hat\varphi(\bxi)\dd\bxi\\
 &=\int u(\bx)\varphi(\bx)\dd \bx.
\end{align*}
The second step follows by Fubini's theorem, since $f\in L^1$ and $\rn{\bxi}^{-s}\hat\varphi\in L^1$. Thus $I_sf=u$ in $\Sp$ and, by local integrability, almost everywhere.
\end{proof}

\begin{remark}\label{rem:inverse}
By \eqref{eq:Dstail}, $D^su$ is not compactly supported if $\int u\ne0$, so $D^s$ need not preserve $\Sspace$. The proof of Lemma~\ref{lem:inverse} uses $D^su\in L^1\cap L^\infty$ without requiring $D^su$ to be a test function.
\end{remark}

For the $L^q$ Hardy inequalities we assume
\begin{equation}\label{eq:qrange}
 1<q<\infty,\qquad q'=\frac{q}{q-1},\qquad 0<s<\frac nq.
\end{equation}
The condition $sq<n$ is necessary on the full test-function space.

\begin{proposition}[Necessity of the parameter range]\label{prop:necessaryrange}
Fix $q>1$ and $s>0$. If $sq\ge n$, no positive constant $C$ makes either of the following inequalities valid for all $u\in\Sspace$:
\[
 C\W_{s,q}[u]\le\|D^su\|_q^q,
 \qquad
 C\W_{s,q}[u]\le\G_{s,q}[u].
\]
\end{proposition}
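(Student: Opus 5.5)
The plan is to exhibit a single test function whose Hardy term $\W_{s,q}$ is infinite while both right-hand sides are finite. Then neither inequality can hold with any $C>0$, because its left-hand side is $+\infty$. The natural choice is $u=\ind_{B_0}=\ind_{\Zp^n}$, which is locally constant and compactly supported, so $u\in\Sspace$.

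\emph{Step 1: the Hardy term diverges.} By the shell formula \eqref{eq:shellsum},
\[
 \W_{s,q}[\ind_{B_0}]=\int_{B_0}\rn{\bx}^{-sq}\dd\bx=(1-p^{-n})\sum_{k\ge0}p^{-nk}p^{sqk}=(1-p^{-n})\sum_{k\ge0}p^{(sq-n)k}.
\]
When $sq\ge n$ every term is at least $1$, so the series is $+\infty$. More generally, any $u\in\Sspace$ with $u(\boldsymbol0)\ne0$ equals $u(\boldsymbol0)$ on some ball $B_\ell$, and the same computation on $B_\ell$ shows $\W_{s,q}[u]=\infty$.

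\emph{Step 2: the right-hand sides are finite.} By Proposition~\ref{prop:integral}, which applies for every $a=s>0$, we have $D^s u\in L^1\cap L^\infty$. Hence $\|D^su\|_q<\infty$. Lemma~\ref{lem:localconstant}, applied with $a=sq>0$, gives $\G_{s,q}[u]<\infty$; this uses no restriction such as $s<1$ or $sq<n$. One may also note that both quantities are strictly positive, although this is not needed. For instance, \eqref{eq:Dstail} shows $D^su(\bx)=-C_{n,s}\rn{\bx}^{-n-s}\ne0$ for $\rn{\bx}>1$.

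\emph{Conclusion.} For any $C>0$ we get $C\W_{s,q}[u]=+\infty$, while $\|D^su\|_q^q<\infty$ and $\G_{s,q}[u]<\infty$, so both inequalities fail for this $u$. There is no real obstacle. The only point to state carefully is that the failure comes from local non-integrability of the weight at the origin, not from any diagonal singularity of the energies, and it is exactly the finiteness of $\G_{s,q}$ for every $s>0$ (Lemma~\ref{lem:localconstant}) that makes the argument work without assuming $s<1$.
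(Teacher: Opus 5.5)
Your proposal is correct and follows essentially the same route as the paper. It uses the test function $\ind_{B_0}$, the divergent shell sum for $\W_{s,q}$, and Proposition~\ref{prop:integral} for $D^su\in L^q$; the only minor difference is that you get $\G_{s,q}[\ind_{B_0}]<\infty$ from Lemma~\ref{lem:localconstant} with $a=sq$, whereas the paper computes it explicitly as $2(1-p^{-n})/(p^{sq}-1)$.
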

\begin{proof}
Take $u=\ind_{B_0}$. This is a test function, but
\[
 \W_{s,q}[u]=(1-p^{-n})\sum_{k\ge0}p^{-(n-sq)k}=\infty
 \qquad(sq\ge n).
\]
On the other hand, Proposition~\ref{prop:integral} gives $D^su\in L^q$. More explicitly,
\begin{equation*}
 D^s\ind_{B_0}(\bx)=
 \begin{cases}
 \displaystyle\frac{1-p^{-n}}{1-p^{-n-s}},&\rn{\bx}\le1,\\[4pt]
 -C_{n,s}\rn{\bx}^{-n-s},&\rn{\bx}>1.
 \end{cases}
\end{equation*}
The difference is nonzero only when one point lies in $B_0$ and the other in $B_0^c$, so
\[
 \G_{s,q}[\ind_{B_0}]
 =2\int_{B_0}\int_{B_0^c}\rn{\bx-\by}^{-n-sq}\dd \by\dd \bx
 =2(1-p^{-n})\sum_{k\le-1}p^{sqk}
 =\frac{2(1-p^{-n})}{p^{sq}-1}<\infty.
\]
Both right-hand sides are finite, while the left-hand side is infinite for $C>0$.
\end{proof}

\begin{remark}[Local constancy and the order of the operator]\label{rem:slarge}
In Euclidean space, first differences of a smooth function generally vanish to first order at the diagonal, leading to the usual range $0<s<1$ for the Gagliardo seminorm~\cite{DNPV}. For $u\in\Sspace$, the difference $u(\bx)-u(\by)$ vanishes below a fixed radius by Lemma~\ref{lem:localconstant}. The energy is therefore finite for all $s>0$, whereas $sq<n$ is needed for integrability of the Hardy weight at the origin.

Theorems~\ref{thm:mainlinear} and \ref{thm:mainnonlinear} and Proposition~\ref{prop:necessaryrange} give the exact range $0<s<n/q$ on the full test-function space. This does not assert finite difference energy for arbitrary measurable functions. For functions vanishing near the origin, the case $sq\ge n$ is a separate question.
\end{remark}

\section{The sharp quadratic-form Hardy inequality}\label{sec:L2}
We prove Theorem~\ref{thm:mainL2}. Fix $0<\alpha<n$ and consider the critical power function
\begin{equation*}
 h(\bx)=\rn{\bx}^{-(n-\alpha)/2}.
\end{equation*}
For a measurable function $u:\Qpn\to\mathbb C$, set
\[
 \E_\alpha[u]=\frac{C_{n,\alpha}}2
 \iint\frac{|u(\bx)-u(\by)|^2}{\rn{\bx-\by}^{n+\alpha}}\dd \bx\dd \by\in[0,\infty],
\]
and let
\begin{equation*}
 \mathcal Y_\alpha=
 \left\{u:\Qpn\to\mathbb C\text{ measurable}:\ \E_\alpha[u]<\infty,\quad
 \int\rn{\bx}^{-\alpha}|u(\bx)|^2\dd \bx<\infty\right\}.
\end{equation*}
By \eqref{eq:quadratic}, this agrees with \eqref{eq:Edef} on $\Sspace$; outside $L^2$, $\E_\alpha$ is understood as the difference integral.

\subsection{The critical power function and the ground state representation}
\begin{lemma}[Equation for the critical power function]\label{lem:L2ground}
For $\bx\ne\boldsymbol0$,
\begin{equation}\label{eq:L2ground}
 D^\alpha h(\bx)=\csharp\rn{\bx}^{-\alpha}h(\bx).
\end{equation}
Moreover,
\begin{equation}\label{eq:L2abs}
 \int\frac{|h(\bx)-h(\by)|}{\rn{\bx-\by}^{n+\alpha}}\dd \by
 =(1-p^{-n})\left(\frac1{p^\alpha-1}-\frac1{p^n-1}\right)
 \rn{\bx}^{-\alpha}h(\bx)<\infty.
\end{equation}
\end{lemma}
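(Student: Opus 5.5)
The first identity \eqref{eq:L2ground} follows by applying Lemma~\ref{lem:power} with $a=\alpha$ and $\delta=(n-\alpha)/2$. The hypothesis $0<\delta<n-a$ there reads $0<(n-\alpha)/2<n-\alpha$, which holds because $\alpha<n$. The lemma gives
\[
 D^\alpha h(\bx)=\frac{\GammaN\bigl(\tfrac{n+\alpha}2\bigr)}{\GammaN\bigl(\tfrac{n-\alpha}2\bigr)}\rn{\bx}^{-(n-\alpha)/2-\alpha}.
\]
By the reflection formula \eqref{eq:reflection} with $z=(n+\alpha)/2\in(0,n)$, we have $1/\GammaN((n-\alpha)/2)=\GammaN((n+\alpha)/2)$. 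Hence the coefficient is $[\GammaN((n+\alpha)/2)]^2=\csharp$, and the power $\rn{\bx}^{-(n-\alpha)/2-\alpha}$ equals $\rn{\bx}^{-\alpha}h(\bx)$. The explicit form of $\csharp$ comes from \eqref{eq:Gammadef}: $1-p^{(n+\alpha)/2-n}=1-p^{-(n-\alpha)/2}$.

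For \eqref{eq:L2abs}, I would repeat the shell computation in the proof of Lemma~\ref{lem:power}, now with absolute values. First normalize to $\rn{\bx}=1$. On the same shell the integrand vanishes. For $\by\in S_{-m}$ with $m\ge1$ (the outer shells), we have $\rn{\bx-\by}=p^m$ and $|h(\bx)-h(\by)|=1-p^{-\delta m}$, giving
\[
 (1-p^{-n})\sum_{m\ge1}p^{-\alpha m}(1-p^{-\delta m}).
\]
For $\by\in S_m$ with $m\ge1$ (the inner shells), we have $\rn{\bx-\by}=1$ and $|h(\bx)-h(\by)|=p^{\delta m}-1$, giving
\[
 (1-p^{-n})\sum_{m\ge1}p^{-nm}(p^{\delta m}-1).
\]
The series converge since $\alpha>0$ and $\delta<n$. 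Summing the geometric series yields
\[
 (1-p^{-n})\left(\frac1{p^\alpha-1}-\frac1{p^{\alpha+\delta}-1}+\frac1{p^{n-\delta}-1}-\frac1{p^n-1}\right).
\]
With $\delta=(n-\alpha)/2$ we get $\alpha+\delta=n-\delta=(n+\alpha)/2$, so the two middle terms cancel, and the result is exactly $(1-p^{-n})\bigl(\frac1{p^\alpha-1}-\frac1{p^n-1}\bigr)$. This is finite and positive since $\alpha<n$.

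For general $\bx=p^j\bx_0$, the same scaling as in Lemma~\ref{lem:power} multiplies the integral by $p^{(\delta+\alpha)j}=\rn{\bx}^{-\alpha}h(\bx)$. The only delicate point is the sign bookkeeping on inner versus outer shells, namely that $h$ increases toward the origin. The cancellation of the $(n+\alpha)/2$ terms is what makes the constant in \eqref{eq:L2abs} so simple. Absolute convergence also justifies using the representation \eqref{eq:Taiblesonintegral} pointwise for $h$ in the first part.
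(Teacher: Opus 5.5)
Your proposal is correct and follows the paper's proof essentially step for step. You use Lemma~\ref{lem:power} with $a=\alpha$, $\delta=(n-\alpha)/2$ together with the reflection identity to get \eqref{eq:L2ground}, and the same outer/inner shell computation at $\rn{\bx}=1$ followed by scaling to get \eqref{eq:L2abs}. The only cosmetic difference is the order of operations: the paper first combines the two shell contributions termwise into $(1-p^{-n})(p^{-\alpha m}-p^{-nm})$ and then sums, whereas you sum four geometric series and then cancel the two terms with exponent $(n+\alpha)/2$.
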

\begin{proof}
Apply Lemma~\ref{lem:power} with $a=\alpha$ and $\delta=(n-\alpha)/2$. By the reflection identity \eqref{eq:reflection},
\[
 \frac{\GammaN((n+\alpha)/2)}{\GammaN((n-\alpha)/2)}
 =\left[\GammaN\left(\frac{n+\alpha}{2}\right)\right]^2=\csharp,
\]
which gives \eqref{eq:L2ground}.

To compute the absolute integral, first suppose $\rn{\bx}=1$. The outer shell $S_{-m}$ contributes
\[
 (1-p^{-n})p^{-\alpha m}\bigl(1-p^{-(n-\alpha)m/2}\bigr),
\]
and the inner shell $S_m$ contributes
\[
 (1-p^{-n})p^{-(n+\alpha)m/2}\bigl(1-p^{-(n-\alpha)m/2}\bigr).
\]
Since $(n+\alpha)/2=\alpha+(n-\alpha)/2$, their sum is
\begin{align*}
 (1-p^{-n})p^{-\alpha m}
   \bigl(1-p^{-(n-\alpha)m/2}\bigr)\bigl(1+p^{-(n-\alpha)m/2}\bigr)=(1-p^{-n})(p^{-\alpha m}-p^{-nm}).
\end{align*}
Summation over $m\ge1$, followed by scaling, proves \eqref{eq:L2abs}.
\end{proof}

\begin{proposition}[Ground state representation]\label{prop:L2identity}
For $u\in\mathcal Y_\alpha$, set $v=u/h$. Then
\begin{equation}\label{eq:L2identity}
 \E_\alpha[u]-\csharp\int\rn{\bx}^{-\alpha}|u(\bx)|^2\dd \bx
 =\frac{C_{n,\alpha}}2\iint
 \frac{h(\bx)h(\by)|v(\bx)-v(\by)|^2}{\rn{\bx-\by}^{n+\alpha}}\dd \bx\dd \by.
\end{equation}
The right-hand side is finite and nonnegative.
\end{proposition}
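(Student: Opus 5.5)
We prove \eqref{eq:L2identity} by the standard pointwise ground state computation, made rigorous for $u\in\mathcal Y_\alpha$ by working with nonnegative integrands only. The starting point is the algebraic identity, valid for $\bx,\by\ne\boldsymbol0$ with $v=u/h$:
\[
 |u(\bx)-u(\by)|^2
 =h(\bx)h(\by)|v(\bx)-v(\by)|^2
 +\bigl(h(\bx)-h(\by)\bigr)\Bigl(\frac{|u(\bx)|^2}{h(\bx)}-\frac{|u(\by)|^2}{h(\by)}\Bigr).
\]
This is checked by expanding both sides, since $h>0$ is real. We multiply by $\frac{C_{n,\alpha}}2\rn{\bx-\by}^{-n-\alpha}$ and integrate over $\Qpn\times\Qpn$. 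The left side gives $\E_\alpha[u]<\infty$. The first term on the right is nonnegative. For the second term, we use symmetry in $(\bx,\by)$ to rewrite it as
\[
 C_{n,\alpha}\int\frac{|u(\bx)|^2}{h(\bx)}\int\frac{h(\bx)-h(\by)}{\rn{\bx-\by}^{n+\alpha}}\dd\by\dd\bx .
\]
By Lemma~\ref{lem:L2ground}, the inner integral equals $C_{n,\alpha}^{-1}\csharp\rn{\bx}^{-\alpha}h(\bx)$, so the second term produces exactly $\csharp\int\rn{\bx}^{-\alpha}|u|^2$.

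The main obstacle is justifying this symmetrization and the splitting, because the mixed term is not sign-definite. The absolute bound \eqref{eq:L2abs} resolves this. It gives
\[
 \iint\frac{|h(\bx)-h(\by)|}{\rn{\bx-\by}^{n+\alpha}}\frac{|u(\bx)|^2}{h(\bx)}\dd\by\dd\bx
 =c\int\rn{\bx}^{-\alpha}|u(\bx)|^2\dd\bx<\infty
\]
for $u\in\mathcal Y_\alpha$, where $c=(1-p^{-n})\bigl(\frac1{p^\alpha-1}-\frac1{p^n-1}\bigr)$. The same bound holds with $\bx,\by$ interchanged. Hence the mixed term is absolutely integrable on $\Qpn\times\Qpn$, and Fubini's theorem applies to it. Splitting it into the two pieces, each absolutely integrable, and exchanging variables in the second piece gives twice the first piece, which produces the symmetrization above.

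Since the left side and the mixed term are both finite, the integral of the nonnegative first term equals their difference and is therefore finite. This yields \eqref{eq:L2identity} with a finite, nonnegative right-hand side. The null sets $\{\bx=\boldsymbol0\}$ and $\{\by=\boldsymbol0\}$ are irrelevant.
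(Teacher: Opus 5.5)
Your proposal is correct and follows the paper's proof essentially step for step. It uses the same two-point identity (the paper's \eqref{eq:L2algebra}), the absolute bound \eqref{eq:L2abs} to justify Fubini and the symmetrization of the mixed term, Lemma~\ref{lem:L2ground} to evaluate that term as $\csharp\int\rn{\bx}^{-\alpha}|u|^2$, and $\E_\alpha[u]<\infty$ to get finiteness of the nonnegative remainder.
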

\begin{proof}
For $A,B>0$ and $z,t\in\mathbb C$, expansion gives
\begin{equation}\label{eq:L2algebra}
 |Az-Bt|^2-AB|z-t|^2
 =(A-B)(A|z|^2-B|t|^2).
\end{equation}
Take $A=h(\bx)$, $B=h(\by)$, $z=v(\bx)$, and $t=v(\by)$. Lemma~\ref{lem:L2ground} gives the integrability estimate
\begin{align*}
 &\iint\frac{|h(\bx)-h(\by)|}{\rn{\bx-\by}^{n+\alpha}}
 \left(\frac{|u(\bx)|^2}{h(\bx)}+\frac{|u(\by)|^2}{h(\by)}\right)\dd \bx\dd \by\\
 =&2\int\frac{|u(\bx)|^2}{h(\bx)}
 \left(\int\frac{|h(\bx)-h(\by)|}{\rn{\bx-\by}^{n+\alpha}}\dd \by\right)\dd \bx\\
 =&2(1-p^{-n})\left(\frac1{p^\alpha-1}-\frac1{p^n-1}\right)
 \int\rn{\bx}^{-\alpha}|u(\bx)|^2\dd \bx\\
 <&\infty.
\end{align*}
Thus the right-hand side of \eqref{eq:L2algebra}, multiplied by the kernel, is absolutely integrable. Fubini's theorem gives
\begin{align*}
 &\frac{C_{n,\alpha}}2\iint
 \frac{(h(\bx)-h(\by))\big(|u(\bx)|^2/h(\bx)-|u(\by)|^2/h(\by)\big)}{\rn{\bx-\by}^{n+\alpha}}\dd \bx\dd \by\\
 =&C_{n,\alpha}\int\frac{|u(\bx)|^2}{h(\bx)}
 \left(\int\frac{h(\bx)-h(\by)}{\rn{\bx-\by}^{n+\alpha}}\dd \by\right)\dd \bx\\
 =&\int\frac{|u(\bx)|^2}{h(\bx)}D^\alpha h(\bx)\dd \bx\\
 =&\csharp\int\rn{\bx}^{-\alpha}|u(\bx)|^2\dd \bx.
\end{align*}
Integration of \eqref{eq:L2algebra} now gives \eqref{eq:L2identity}. The remainder is finite because $\E_\alpha[u]<\infty$.
\end{proof}

\begin{corollary}[Hardy inequality and equality case]\label{cor:L2Hardy}
Inequality \eqref{eq:mainL2} holds for every $u\in\mathcal Y_\alpha$, with equality if and only if $u=0$ almost everywhere. In particular, equality in $\Sspace$ holds only for $u\equiv0$.
\end{corollary}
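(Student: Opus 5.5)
The inequality follows at once from Proposition~\ref{prop:L2identity}. For $u\in\mathcal Y_\alpha$ the right-hand side of \eqref{eq:L2identity} is a finite integral of a nonnegative integrand, since $h>0$ and $C_{n,\alpha}>0$. Hence $\E_\alpha[u]-\csharp\int\rn{\bx}^{-\alpha}|u|^2\dd\bx\ge0$, which is \eqref{eq:mainL2} on $\mathcal Y_\alpha$. For $u\in\Sspace$, Lemma~\ref{lem:localconstant} with $q=2$ and $a=\alpha<n$ shows that $u\in\mathcal Y_\alpha$. Corollary~\ref{cor:quadratic} identifies the difference-integral energy with $\langle D^\alpha u,u\rangle_{L^2}$, so the statement for test functions is a special case.

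For the equality case, suppose equality holds for some $u\in\mathcal Y_\alpha$. The remainder in \eqref{eq:L2identity} then vanishes. Its integrand is nonnegative and the kernel $h(\bx)h(\by)\rn{\bx-\by}^{-n-\alpha}$ is strictly positive off the diagonal and the coordinate axes at the origin. Therefore $v(\bx)=v(\by)$ for almost every pair $(\bx,\by)$. By Fubini's theorem there is a fixed $\by_0$ with $v(\bx)=v(\by_0)=:c$ for almost every $\bx$. Hence $u=c\,h$ almost everywhere.

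It remains to check that $c\,h$ has infinite Hardy term unless $c=0$. By \eqref{eq:shellsum},
\[
\int\rn{\bx}^{-\alpha}h(\bx)^2\dd\bx=\int\rn{\bx}^{-n}\dd\bx=(1-p^{-n})\sum_{k\in\mathbb Z}1=\infty.
\]
Since $u\in\mathcal Y_\alpha$ requires this term to be finite, $c=0$, and so $u=0$ almost everywhere. The converse, that $u=0$ gives equality, is trivial.

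For test functions, a locally constant $u$ that vanishes almost everywhere vanishes identically. Indeed, if $u(\bx_0)\neq0$, then $u\neq0$ on a ball around $\bx_0$, and that ball has positive Haar measure. So equality in $\Sspace$ holds only for $u\equiv0$.

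The only step that needs care is the measure-theoretic passage from a vanishing remainder to $v$ being constant almost everywhere. The Fubini argument above handles it directly, because the kernel is positive almost everywhere on $\Qpn\times\Qpn$. No connectedness is needed, unlike what a naive Euclidean-style argument might suggest.
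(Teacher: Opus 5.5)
Your proposal is correct and follows essentially the same route as the paper: the nonnegative remainder in the ground state representation \eqref{eq:L2identity} gives the inequality; in the equality case, a.e.\ positivity of the kernel and Fubini's theorem give $u=ch$; and the divergent integral $\int\rn{\bx}^{-n}\dd\bx$ forces $c=0$. Your explicit checks that $\Sspace\subset\mathcal Y_\alpha$ (via Lemma~\ref{lem:localconstant} and Corollary~\ref{cor:quadratic}) and that a locally constant function vanishing almost everywhere is identically zero are the points the paper leaves implicit.
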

\begin{proof}
The remainder in \eqref{eq:L2identity} gives the inequality and vanishes in the equality case. Since $h(\bx)h(\by)\rn{\bx-\by}^{-n-\alpha}>0$ for almost every $(\bx,\by)$, we have $v(\bx)=v(\by)$ for almost every pair. Fubini's theorem implies that $v=c$ almost everywhere for some $c\in\mathbb C$. If $c\ne0$, then
\[
 \int\rn{\bx}^{-\alpha}|u(\bx)|^2\dd \bx
 =|c|^2\int\rn{\bx}^{-n}\dd \bx
 =|c|^2(1-p^{-n})\sum_{k\in\mathbb Z}1=\infty,
\]
contrary to $u\in\mathcal Y_\alpha$. Hence $c=0$ and $u=0$ almost everywhere. Conversely, the zero function attains equality.
\end{proof}

\subsection{Truncation and sharpness}
The next lemma gives convergence of the quadratic energy and Hardy term under truncation near the origin. This also allows the Hardy inequality to pass from test functions supported away from the origin to all of $\Sspace$.

\begin{lemma}[Truncation near the origin]\label{lem:origin}
For every integer $k$,
\begin{align}
 \E_\alpha[\ind_{B_k}]&=
 \frac{1-p^{-n}}{1-p^{-(n+\alpha)}}p^{-(n-\alpha)k},\label{eq:ballenergy}\\
 \int\rn{\bx}^{-\alpha}\ind_{B_k}(\bx)\dd \bx
 &=\frac{1-p^{-n}}{1-p^{-(n-\alpha)}}p^{-(n-\alpha)k}.\label{eq:ballweight}
\end{align}
Consequently, if $u\in\Sspace$ and $u_M=u\ind_{\Qpn\setminus B_M}$, then, as $M\to\infty$,
\[
 \E_\alpha[u-u_M]\longrightarrow0,
 \qquad
 \int\rn{\bx}^{-\alpha}|u-u_M|^2\dd \bx\longrightarrow0.
\]
\end{lemma}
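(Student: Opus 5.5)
I will compute the two explicit quantities directly by shell decomposition, then deduce the convergence statements from them using uniform local constancy.

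\textbf{The ball energy \eqref{eq:ballenergy}.} By the jump-kernel formula \eqref{eq:quadratic}, $\E_\alpha[\ind_{B_k}]$ equals $C_{n,\alpha}$ times $\int_{B_k}\int_{\Qpn\setminus B_k}\rn{\bx-\by}^{-n-\alpha}\dd\by\dd\bx$. The factor $\tfrac12$ cancels against the two symmetric regions. For $\bx\in B_k$ and $\by\in S_j$ with $j<k$, \eqref{eq:ultradistance} gives $\rn{\bx-\by}=p^{-j}$. The inner integral therefore equals
\[
 (1-p^{-n})\sum_{j<k}p^{-nj}p^{(n+\alpha)j}=\frac{(1-p^{-n})p^{\alpha k}}{p^\alpha-1}.
\]
This value is independent of $\bx$. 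Multiplying by $|B_k|=p^{-nk}$ and by $C_{n,\alpha}=(p^\alpha-1)/(1-p^{-n-\alpha})$ yields \eqref{eq:ballenergy}. Alternatively, one can scale the $k=0$ computation from Proposition~\ref{prop:necessaryrange}.

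\textbf{The ball weight \eqref{eq:ballweight}.} This is \eqref{eq:shellsum} applied to $\ind_{B_k}$:
\[
 (1-p^{-n})\sum_{j\ge k}p^{-(n-\alpha)j}.
\]
It is a geometric series and converges since $\alpha<n$.

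\textbf{Convergence.} Let $u\in\Sspace$. By Lemma~\ref{lem:localconstant}, $u$ is constant on $B_\ell$ for some $\ell$, so $u=u(\boldsymbol0)$ on $B_\ell$. Hence, for $M\ge\ell$,
\[
 u-u_M=u\ind_{B_M}=u(\boldsymbol0)\ind_{B_M}.
\]
This is the key observation: it reduces the problem to indicators of balls. Both $\E_\alpha[u-u_M]$ and the weighted integral then equal $|u(\boldsymbol0)|^2$ times \eqref{eq:ballenergy} or \eqref{eq:ballweight} with $k=M$. Each carries the factor $p^{-(n-\alpha)M}$, which tends to $0$ as $M\to\infty$ because $\alpha<n$.

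The main step needing care is the first computation. One must justify that every cross pair lies in distinct shells or in the configuration ($\bx\in B_k$, $\by\notin B_k$) where $\rn{\bx-\by}=\rn{\by}$. This follows directly from the ultrametric property in Lemma~\ref{lem:shell}. Everything else is routine geometric summation.
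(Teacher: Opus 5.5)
Your proof is correct; it departs from the paper only in how you compute the ball energy \eqref{eq:ballenergy}.

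The paper evaluates \eqref{eq:ballenergy} on the Fourier side. It uses the first equality in \eqref{eq:quadratic} together with \eqref{eq:ballFourier} to write $\E_\alpha[\ind_{B_k}]=p^{-2nk}\int_{B_{-k}}\rn{\bxi}^{\alpha}\dd\bxi$, and then sums over shells. You instead evaluate the jump-kernel integral directly in physical space, using $\rn{\bx-\by}=\rn{\by}$ for $\bx\in B_k$, $\by\notin B_k$.

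Both computations reduce to a single geometric series, and each has a small advantage:
\begin{itemize}
\item The Fourier route never uses the constant $C_{n,\alpha}$.
\item Your route avoids the Fourier transform. It is exactly the computation of $\G_{s,q}[\ind_{B_0}]$ in Proposition~\ref{prop:necessaryrange} with $sq$ replaced by $\alpha$, followed by rescaling.
\item The exact cancellation of the factor $p^\alpha-1$ in your computation also serves as a consistency check on the normalization $C_{n,\alpha}$ in the jump-kernel formula.
\end{itemize}

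The weight formula \eqref{eq:ballweight} and the reduction $u-u_M=u(\boldsymbol0)\ind_{B_M}$ for large $M$ are the same as in the paper.
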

\begin{proof}
By \eqref{eq:ballFourier} and \eqref{eq:quadratic},
\[
 \E_\alpha[\ind_{B_k}]
 =p^{-2nk}\int_{B_{-k}}\rn{\bxi}^{\alpha}\dd\bxi
 =p^{-2nk}(1-p^{-n})\sum_{j\ge-k}p^{-(n+\alpha)j},
\]
and summing gives \eqref{eq:ballenergy}. Similarly,
\[
 \int\rn{\bx}^{-\alpha}\ind_{B_k}(\bx)\dd \bx
 =(1-p^{-n})\sum_{j\ge k}p^{-(n-\alpha)j},
\]
which gives \eqref{eq:ballweight}. For all sufficiently large $M$, the function $u$ equals $u(\boldsymbol0)$ on $B_M$, so $u-u_M=u(\boldsymbol0)\ind_{B_M}$. Set $k=M$ in the two formulas and let $M\to+\infty$. The limits follow from $n-\alpha>0$.
\end{proof}

To prove sharpness, we compute the energy of the critical power function truncated to an annulus.
\begin{proposition}[Energy of an annular truncation]\label{prop:L2sharp}
Let $N\ge1$ and set
\[
 A_N=\bigcup_{k=-N}^N S_k,
 \qquad u_N=h\ind_{A_N}.
\]
Then $u_N\in\Sspace$ and
\begin{equation}\label{eq:L2weight}
 \W_{\alpha/2,2}[u_N]
 =\int\rn{\bx}^{-\alpha}|u_N(\bx)|^2\dd \bx
 =(2N+1)(1-p^{-n}).
\end{equation}
Moreover,
\begin{equation}\label{eq:L2RNexact}
 \E_\alpha[u_N]-\csharp\W_{\alpha/2,2}[u_N]
 =2C_{n,\alpha}(1-p^{-n})^2
 \frac{p^{-(n+\alpha)/2}\bigl(1-p^{-(n+\alpha)(2N+1)/2}\bigr)}
 {(1-p^{-(n+\alpha)/2})^2}.
\end{equation}
In particular, $\E_\alpha[u_N]/\W_{\alpha/2,2}[u_N]\to\csharp$.
\end{proposition}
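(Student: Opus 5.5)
The plan is to derive both identities from the shell decomposition of Lemma~\ref{lem:shell}, and to obtain the energy identity \eqref{eq:L2RNexact} directly from the ground state representation, Proposition~\ref{prop:L2identity}. Nothing new needs to be estimated: the remainder will be an explicit double geometric series.

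First, $u_N\in\Sspace$. The function $u_N$ is supported in the compact ball $B_{-N}$. It is constant on each shell $S_k$, namely equal to $p^{-(n-\alpha)k/2}$ for $|k|\le N$, and it vanishes on $B_{N+1}$ and on the complement of $B_{-N}$. All of these sets are open, so $u_N$ is locally constant; in particular $u_N\in\mathcal Y_\alpha$ by Lemma~\ref{lem:localconstant}.

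For \eqref{eq:L2weight}, note that $\rn{\bx}^{-\alpha}h(\bx)^2=\rn{\bx}^{-n}$. On $S_k$ this equals $p^{nk}$, and $|S_k|=(1-p^{-n})p^{-nk}$, so each of the $2N+1$ shells contributes exactly $1-p^{-n}$.

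For \eqref{eq:L2RNexact}, apply \eqref{eq:L2identity} with $v=u_N/h=\ind_{A_N}$. Then $|v(\bx)-v(\by)|^2$ is the indicator that exactly one of $\bx,\by$ lies in $A_N$. By symmetry the remainder equals
\[
 C_{n,\alpha}\int_{A_N}\int_{\Qpn\setminus A_N}\frac{h(\bx)h(\by)}{\rn{\bx-\by}^{n+\alpha}}\dd \by\dd \bx .
\]
Decompose into pairs $\bx\in S_k$ with $|k|\le N$ and $\by\in S_\ell$ with $\ell>N$ or $\ell<-N$. Since $k\ne\ell$, \eqref{eq:ultradistance} gives $\rn{\bx-\by}=p^{-\min\{k,\ell\}}$, so the integrand is constant on $S_k\times S_\ell$.

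Take first $\ell>N$. The contribution is
\[
 (1-p^{-n})^2\,p^{-nk-n\ell}\,p^{(n-\alpha)(k+\ell)/2}\,p^{(n+\alpha)k}
 =(1-p^{-n})^2\,r^{\ell-k},
 \qquad r=p^{-(n+\alpha)/2}.
\]
The case $\ell<-N$ gives $(1-p^{-n})^2r^{k-\ell}$ by the same computation with the roles of $k$ and $\ell$ exchanged. By the reflection $k\mapsto-k$, $\ell\mapsto-\ell$ it contributes the same total, which produces the factor $2$.

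It then remains to sum. We have
\[
 \sum_{k=-N}^N\sum_{\ell>N}r^{\ell-k}
 =\sum_{k=-N}^N\frac{r^{N+1-k}}{1-r}
 =\frac{r(1-r^{2N+1})}{(1-r)^2}.
\]
Multiplying by $2C_{n,\alpha}(1-p^{-n})^2$ gives exactly the right-hand side of \eqref{eq:L2RNexact}.

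Finally, the remainder is bounded uniformly in $N$, while by \eqref{eq:L2weight} the Hardy term grows like $(2N+1)(1-p^{-n})$. Dividing \eqref{eq:L2RNexact} by $\W_{\alpha/2,2}[u_N]$ therefore yields $\E_\alpha[u_N]/\W_{\alpha/2,2}[u_N]\to\csharp$.

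I expect no real obstacle. The only points needing care are the bookkeeping of exponents in the shell-pair term, which must reduce to a function of $\ell-k$ alone, and checking that the two boundary sides ($\ell>N$ and $\ell<-N$) contribute equally.
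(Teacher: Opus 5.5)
Your proposal is correct and follows the paper's proof essentially step for step: shell counting for the Hardy term, then substituting $v=\ind_{A_N}$ into the ground state representation \eqref{eq:L2identity}, using the shell-pair value $(1-p^{-n})^2p^{-(n+\alpha)|k-\ell|/2}$, and summing one boundary as a double geometric series while obtaining the other by the reflection $(k,\ell)\mapsto(-k,-\ell)$ (you sum the inner side and reflect, whereas the paper sums the outer side and reflects). One harmless slip: on $S_k$ the function $u_N$ equals $p^{(n-\alpha)k/2}$, not $p^{-(n-\alpha)k/2}$, but your shell-pair computation already uses the correct exponent.
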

\begin{proof}
The annulus $A_N$ is compact and open and avoids the origin, so $u_N$ is a test function. Each $S_k\subset A_N$ contributes the same amount to the Hardy integral:
\[
 (1-p^{-n})p^{-nk}p^{\alpha k}p^{(n-\alpha)k}=1-p^{-n},
\]
which gives \eqref{eq:L2weight}.

Substitute $u_N/h=\ind_{A_N}$ into \eqref{eq:L2identity}. By symmetry, the remainder is
\[
 C_{n,\alpha}\int_{A_N}\int_{A_N^c}
 \frac{h(\bx)h(\by)}{\rn{\bx-\by}^{n+\alpha}}\dd \by\dd \bx.
\]
For $\bx\in S_k$, $\by\in S_\ell$, and $k\ne\ell$, the integral over this shell pair is
\begin{align*}
 (1-p^{-n})^2p^{-n(k+\ell)+(n-\alpha)(k+\ell)/2+(n+\alpha)\min(k,\ell)}=(1-p^{-n})^2p^{-(n+\alpha)|k-\ell|/2}.
\end{align*}
Summing over the outer boundary, where $\ell<-N\le k\le N$, gives
\begin{align*}
 \sum_{k=-N}^N\sum_{\ell<-N}p^{-(n+\alpha)(k-\ell)/2}
 &=\sum_{j=0}^{2N}\sum_{m=1}^{\infty}p^{-(n+\alpha)(j+m)/2}\\
 &=\frac{p^{-(n+\alpha)/2}}{1-p^{-(n+\alpha)/2}}
   \sum_{j=0}^{2N}p^{-(n+\alpha)j/2}\\
 &=\frac{p^{-(n+\alpha)/2}\bigl(1-p^{-(n+\alpha)(2N+1)/2}\bigr)}
 {(1-p^{-(n+\alpha)/2})^2}.
\end{align*}
Here $k=-N+j$ and $\ell=-N-m$. The map $(k,\ell)\mapsto(-k,-\ell)$ takes the inner boundary $-N\le k\le N<\ell$ to the outer boundary, preserving $|k-\ell|$. The equal contributions give \eqref{eq:L2RNexact}. The remainder is bounded independently of $N$, while $\W_{\alpha/2,2}[u_N]=(2N+1)(1-p^{-n})\to\infty$, so the quotient tends to $\csharp$.
\end{proof}

Proposition~\ref{prop:L2sharp} proves that $\csharp$ is optimal and, with Corollary~\ref{cor:L2Hardy}, completes the proof of Theorem~\ref{thm:mainL2}. It also gives
\[
 0\le\frac{\E_\alpha[u_N]}{\W_{\alpha/2,2}[u_N]}-\csharp
 \le\frac{2C_{n,\alpha}(1-p^{-n})p^{-(n+\alpha)/2}}
 {(2N+1)(1-p^{-(n+\alpha)/2})^2}.
\]

\subsection{The critical coupling for a singular Schr\"odinger form}
\begin{corollary}[Critical coupling constant]\label{cor:Schrodinger}
Let $0<\alpha<n$ and $b\in\mathbb R$, and define on $\Sspace$
\begin{equation*}
 Q_b[u]=\E_\alpha[u]+b\int\rn{\bx}^{-\alpha}|u(\bx)|^2\dd \bx.
\end{equation*}
Then $Q_b[u]\ge0$ for every $u\in\Sspace$ if and only if
\begin{equation}\label{eq:criticalb}
 b\ge-\csharp.
\end{equation}
If $b<-\csharp$, then
\[
 \inf\{Q_b[u]:u\in\Sspace,\ \|u\|_2=1\}=-\infty.
\]
\end{corollary}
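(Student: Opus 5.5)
The plan has two parts: nonnegativity for $b\ge-\csharp$, and unboundedness below for $b<-\csharp$.

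For the first part, take $u\in\Sspace$. If $b\ge0$, then $Q_b[u]\ge\E_\alpha[u]\ge0$ by \eqref{eq:quadratic}. If $-\csharp\le b<0$, Theorem~\ref{thm:mainL2} (Corollary~\ref{cor:L2Hardy}, which applies since $\Sspace\subset\mathcal Y_\alpha$ by Lemma~\ref{lem:localconstant}) gives
\[
 Q_b[u]\ge(\csharp+b)\int\rn{\bx}^{-\alpha}|u|^2\dd\bx\ge0 .
\]

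For the converse, let $b<-\csharp$. I would use the annular truncations $u_N=h\ind_{A_N}$ from Proposition~\ref{prop:L2sharp}. By \eqref{eq:L2weight} and \eqref{eq:L2RNexact},
\[
 Q_b[u_N]=\E_\alpha[u_N]-\csharp\W_{\alpha/2,2}[u_N]+(\csharp+b)\W_{\alpha/2,2}[u_N]
 \le R+(\csharp+b)(2N+1)(1-p^{-n}),
\]
where $R$ denotes the $N$-independent bound on the remainder in \eqref{eq:L2RNexact}. For large $N$ this is negative, so nonnegativity fails, which proves the equivalence with \eqref{eq:criticalb}.

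It remains to show that the infimum over the unit sphere is $-\infty$, and this requires normalizing in $L^2$. Both $\E_\alpha$ and the Hardy term are homogeneous of degree $-\alpha$ under dilation. Fix one $u$ with $Q_b[u]<0$ (for instance $u=u_N$ above), and set $u^{(j)}(\bx)=p^{-nj/2}u(p^{-j}\bx)$. Then $u^{(j)}\in\Sspace$ and $\|u^{(j)}\|_2=\|u\|_2$. The change of variables $\bx=p^{j}\bz$ (so $\rn{\bx}=p^{-j}\rn{\bz}$) gives $Q_b[u^{(j)}]=p^{\alpha j}Q_b[u]$. I expect this scaling identity to be the only point needing care: check it once for the difference integral and once for the weight, or directly via \eqref{eq:quadratic} on the Fourier side. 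Letting $j\to\infty$, the quotient $Q_b[u^{(j)}]/\|u^{(j)}\|_2^2=p^{\alpha j}Q_b[u]/\|u\|_2^2\to-\infty$. Dividing by $\|u^{(j)}\|_2$, which is harmless because $Q_b$ is quadratic, yields unit vectors along which $Q_b\to-\infty$.
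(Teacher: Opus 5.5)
Your proof is correct and follows the paper's route: the Hardy inequality gives $Q_b\ge(\csharp+b)\W_{\alpha/2,2}\ge0$; for $b<-\csharp$, the annular truncations $u_N$ with their $N$-independent remainder bound give $Q_b[u_N]<0$; and an $L^2$-normalized dilation then drives $Q_b$ to $-\infty$. One slip: since $|p^{-j}|_p=p^{j}$, the norm-preserving dilation is $p^{nj/2}u(p^{-j}\bx)$, whereas your factor $p^{-nj/2}$ gives $\|u^{(j)}\|_2=p^{-nj}\|u\|_2$ and $Q_b[u^{(j)}]=p^{(\alpha-2n)j}Q_b[u]$; the conclusion survives because you pass through the scale-invariant quotient $Q_b[u^{(j)}]/\|u^{(j)}\|_2^2=p^{\alpha j}Q_b[u]/\|u\|_2^2$ and then renormalize.
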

\begin{proof}
If $b\ge-\csharp$, Theorem~\ref{thm:mainL2} gives
\[
 Q_b[u]\ge(\csharp+b)\int\rn{\bx}^{-\alpha}|u|^2\dd \bx\ge0.
\]
If $b<-\csharp$, the sequence in Proposition~\ref{prop:L2sharp} satisfies
\[
 \frac{Q_b[u_N]}{\W_{\alpha/2,2}[u_N]}
 =\frac{\E_\alpha[u_N]}{\W_{\alpha/2,2}[u_N]}+b
 \longrightarrow\csharp+b<0.
\]
Choosing one of these truncations and normalizing it yields $v\in\Sspace$ with $\|v\|_2=1$ and $Q_b[v]<0$.

For $a\in\Qp\setminus\{0\}$, define $v_a(\bx)=|a|_p^{n/2}v(a\bx)$. A change of variables gives
\[
 \|v_a\|_2=1,\qquad
 \E_\alpha[v_a]=|a|_p^\alpha\E_\alpha[v],\qquad
 \int\rn{\bx}^{-\alpha}|v_a(\bx)|^2\dd \bx
 =|a|_p^\alpha\int\rn{\bx}^{-\alpha}|v(\bx)|^2\dd \bx.
\]
Hence $Q_b[v_a]=|a|_p^\alpha Q_b[v]$. Taking $a=p^{-j}$, we have $|a|_p=p^j\to\infty$, and therefore $Q_b[v_a]\to-\infty$.
\end{proof}

For $n=1$, the critical value in \eqref{eq:criticalb} is
\[
 b_*=-\left[\Gamma_p\left(\frac{1+\alpha}{2}\right)\right]^2,
\]
which agrees with the one-dimensional critical potential coefficient in \cite{BGM}. Corollary~\ref{cor:Schrodinger} concerns quadratic forms on the test-function space.

\section{The sharp linear \texorpdfstring{$L^q$}{Lq} Hardy inequality}\label{sec:schur}
We prove Theorem~\ref{thm:mainlinear} under the assumptions \eqref{eq:qrange}.

\subsection{A weighted Riesz potential estimate}
Define
\begin{equation*}
 K_s(\bx,\by)=\rn{\bx}^{-s}k_s(\bx-\by),\qquad
 T_sf(\bx)=\int_{\Qpn}K_s(\bx,\by)f(\by)\dd \by.
\end{equation*}
The kernel $K_s(\bx,\by)$ is strictly positive for almost every $(\bx,\by)$. For nonnegative input, we first allow $T_sf$ to take the value infinity; the following Schur estimate proves $L^q$ boundedness.

\begin{lemma}[Weighted Schur test]\label{lem:Schur}
Let $(X,\mu)$ be a $\sigma$-finite measure space, let $K$ be a nonnegative measurable kernel, and set
\[
 Tf(x)=\int_X K(x,y)f(y)\dd\mu(y),\qquad
 T^*g(y)=\int_X K(x,y)g(x)\dd\mu(x).
\]
If a function $h$, positive and finite almost everywhere, and constants $A,B>0$ satisfy
\[
 T(h^{q'})(x)\le Ah(x)^{q'},\qquad
 T^*(h^q)(y)\le Bh(y)^q,
\]
then $T$ defines a bounded operator on $L^q$, with
\begin{equation*}
 \|T\|_{q\to q}\le A^{1/q'}B^{1/q}.
\end{equation*}
\end{lemma}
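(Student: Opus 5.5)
The plan is the standard Hölder argument with the auxiliary weight $h$, the classical proof of the Schur test. It suffices to treat nonnegative $f$: since $K\ge0$, we have $|Tf|\le T|f|$ pointwise, and $T|f|$ is a well-defined measurable function with values in $[0,\infty]$ by Tonelli's theorem. The $\sigma$-finiteness assumption is what makes Tonelli applicable. So fix $f\ge0$ in $L^q$.

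For almost every $x$, write $K(x,y)f(y)=\bigl(K(x,y)^{1/q'}h(y)\bigr)\bigl(K(x,y)^{1/q}h(y)^{-1}f(y)\bigr)$. This is legitimate because $h$ is positive and finite almost everywhere. Apply Hölder's inequality in $y$ with exponents $q'$ and $q$:
\[
 Tf(x)\le\Bigl(\int K(x,y)h(y)^{q'}\dd\mu(y)\Bigr)^{1/q'}
 \Bigl(\int K(x,y)h(y)^{-q}f(y)^q\dd\mu(y)\Bigr)^{1/q}.
\]
By the first hypothesis, the first factor is at most $A^{1/q'}h(x)$. Raising to the power $q$ gives
\[
 Tf(x)^q\le A^{q/q'}h(x)^q\int K(x,y)h(y)^{-q}f(y)^q\dd\mu(y).
\]

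Next integrate in $x$ and interchange the order of integration by Tonelli, since everything is nonnegative. The inner integral becomes $\int K(x,y)h(x)^q\dd\mu(x)=T^*(h^q)(y)\le Bh(y)^q$. Hence
\[
 \|Tf\|_q^q\le A^{q/q'}B\int h(y)^q h(y)^{-q}f(y)^q\dd\mu(y)=A^{q/q'}B\|f\|_q^q .
\]
Taking $q$-th roots gives $\|Tf\|_q\le A^{1/q'}B^{1/q}\|f\|_q$. In particular $Tf$ is finite almost everywhere.

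For complex $f\in L^q$, the integral defining $Tf(x)$ converges absolutely wherever $T|f|(x)<\infty$, which holds almost everywhere. Since $|Tf|\le T|f|$, the same bound follows, and $T$ is a bounded linear operator on $L^q$ with the stated norm estimate.

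The only delicate point is measure-theoretic bookkeeping: handling the null sets where $h$ vanishes or is infinite, and justifying Tonelli. That is why the argument is first run for nonnegative $f$ with values allowed in $[0,\infty]$. Beyond that, the argument is routine.
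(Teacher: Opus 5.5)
Your proof is correct and follows the paper's argument essentially line for line: Hölder's inequality with the weight $h$ against the measure $K(x,y)\,d\mu(y)$, the identity $A^{q/q'}=A^{q-1}$, Tonelli combined with the adjoint hypothesis, and $|Tf|\le T|f|$ for complex $f$. The paper additionally runs the computation for truncated kernels and simple functions and then passes to the limit by monotone convergence, but your direct use of Hölder and Tonelli for $[0,\infty]$-valued functions already covers that case.
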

\begin{proof}
First suppose $f\ge0$. Write $f(y)=[f(y)h(y)^{-1}]h(y)$ and apply H\"older's inequality with exponents $q$ and $q'$ in the measure $K(x,y)\dd\mu(y)$. This gives
\begin{align*}
 |Tf(x)|^q
 &\le\left(\int K(x,y)h(y)^{q'}\dd\mu(y)\right)^{q-1}
       \int K(x,y)|f(y)|^qh(y)^{-q}\dd\mu(y)\\
 &\le A^{q-1}h(x)^q\int K(x,y)|f(y)|^qh(y)^{-q}\dd\mu(y).
\end{align*}
Since $q'(q-1)=q$, integration in $x$, Tonelli's theorem, and the adjoint bound give
\begin{align*}
 \|Tf\|_q^q
 &\le A^{q-1}\int |f(y)|^qh(y)^{-q}
       \left(\int K(x,y)h(x)^q\dd\mu(x)\right)\dd\mu(y)\\
 &\le A^{q-1}B\|f\|_q^q.
\end{align*}
Apply the calculation first to truncated kernels and nonnegative simple functions with supports of finite measure, then use monotone convergence for nonnegative $L^q$ functions. For complex $f$, the inequality $|Tf|\le T|f|$ gives absolute convergence almost everywhere and the same bound.
\end{proof}

\begin{proposition}[Choice of Schur weights]\label{prop:Schurweights}
Let
\begin{equation*}
 h_0(\bx)=\rn{\bx}^{-n/(qq')}.
\end{equation*}
Then
\begin{equation}\label{eq:eigenweights}
 T_s(h_0^{q'})=\Msharp^{-1}h_0^{q'},\qquad
 T_s^*(h_0^q)=\Msharp^{-1}h_0^q.
\end{equation}
Consequently, $\|T_s\|_{q\to q}\le\Msharp^{-1}$.
\end{proposition}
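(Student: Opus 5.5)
The plan is to compute both sides of \eqref{eq:eigenweights} directly from the Riesz semigroup identity, Lemma~\ref{lem:semigroup}, and then apply Lemma~\ref{lem:Schur} with $A=B=\Msharp^{-1}$.

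\emph{Direct identity.} Note that $h_0^{q'}(\by)=\rn{\by}^{-n/q}=\rn{\by}^{b-n}$ with $b=n/q'$. Then
\[
T_s(h_0^{q'})(\bx)=\rn{\bx}^{-s}\int k_s(\bx-\by)\rn{\by}^{b-n}\dd\by
=\GammaN(b)\,\rn{\bx}^{-s}(k_s*k_b)(\bx).
\]
Since $s>0$, $b>0$ and $s+b=n/q'+s<n/q'+n/q=n$, Lemma~\ref{lem:semigroup} applies and gives
\[
(k_s*k_b)(\bx)=k_{s+b}(\bx)=\rn{\bx}^{s+b-n}/\GammaN(s+b).
\]
Multiplying by $\rn{\bx}^{-s}$ yields
\[
\frac{\GammaN(n/q')}{\GammaN(n/q'+s)}\rn{\bx}^{-n/q}=\Msharp^{-1}h_0^{q'}(\bx).
\]
The integrals converge absolutely at every $\bx\neq\boldsymbol0$, since everything is positive and the semigroup lemma asserts absolute convergence.

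\emph{Adjoint identity.} Here
\[
T_s^*(h_0^q)(\by)=\int\rn{\bx}^{-s}k_s(\bx-\by)\rn{\bx}^{-n/q'}\dd\bx
=\int k_s(\by-\bx)\rn{\bx}^{-(s+n/q')}\dd\bx,
\]
using the symmetry of $k_s$. The exponent is $-(s+n/q')=c-n$ with $c=n/q-s$, and $c>0$ exactly because $sq<n$; this is where the hypothesis $s<n/q$ enters. Writing $\rn{\bx}^{c-n}=\GammaN(c)k_c$ and applying Lemma~\ref{lem:semigroup} with $s+c=n/q<n$ gives
\[
\GammaN(c)\,\frac{\rn{\by}^{n/q-n}}{\GammaN(n/q)}
=\frac{\GammaN(n/q-s)}{\GammaN(n/q)}\,\rn{\by}^{-n/q'}
=\frac{\GammaN(n/q-s)}{\GammaN(n/q)}\,h_0^q(\by).
\]
The remaining step is to check that this constant equals $\Msharp^{-1}$. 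By the reflection identity \eqref{eq:reflection}, $\GammaN(n/q-s)=1/\GammaN(n/q'+s)$ and $\GammaN(n/q)=1/\GammaN(n/q')$, both arguments lying in $(0,n)$. Hence the ratio is $\GammaN(n/q')/\GammaN(n/q'+s)=\Msharp^{-1}$, as required.

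\emph{Conclusion.} Lemma~\ref{lem:Schur} with $h=h_0$, which is positive and finite off the origin, gives
\[
\|T_s\|_{q\to q}\le(\Msharp^{-1})^{1/q'}(\Msharp^{-1})^{1/q}=\Msharp^{-1}.
\]
The main obstacle is the bookkeeping of exponents: one must verify that both pairs of Riesz indices are positive with sum below $n$, and the reflection step matching the two constants is the only nonobvious point. Note also that the eigenvalue identities hold pointwise at every $\bx\neq\boldsymbol0$, so they hold almost everywhere, which is all the Schur lemma needs.
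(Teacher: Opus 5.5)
Your proposal is correct and matches the paper's proof essentially step for step. Both arguments write $h_0^{q'}=\GammaN(n/q')k_{n/q'}$ and $\rn{\bx}^{-s}h_0^q=\GammaN(n/q-s)k_{n/q-s}$, apply Lemma~\ref{lem:semigroup} twice (the second time after using the symmetry of $k_s$), use the reflection identity to identify the adjoint constant with $\Msharp^{-1}$, and conclude with Lemma~\ref{lem:Schur} taking $A=B=\Msharp^{-1}$.
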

\begin{proof}
We have $h_0^{q'}(\bx)=\rn{\bx}^{-n/q}=\GammaN(n/q')k_{n/q'}(\bx)$. Since $n/q'+s<n$, Lemma~\ref{lem:semigroup} applies to $k_s*k_{n/q'}$ and gives
\begin{align*}
 I_s(h_0^{q'})(\bx)
 &=\GammaN(n/q')(k_s*k_{n/q'})(\bx)\\
 &=\GammaN(n/q')k_{n/q'+s}(\bx)\\
 &=\frac{\GammaN(n/q')}{\GammaN(n/q'+s)}\rn{\bx}^{s-n/q}.
\end{align*}
Multiplication by $\rn{\bx}^{-s}$ and the definition of $\Msharp$ in \eqref{eq:mainlinear} yield the first identity.

By symmetry of the Riesz kernel,
\[
 T_s^*(h_0^q)(\by)=I_s(\rn{\cdot}^{-s}h_0^q)(\by).
\]
Since $\rn{\bx}^{-s}h_0(\bx)^q=\rn{\bx}^{(n/q-s)-n}$ and $n/q-s>0$, another application of the semigroup identity gives
\begin{align*}
 T_s^*(h_0^q)(\by)
 &=\GammaN(n/q-s)(k_s*k_{n/q-s})(\by)\\
 &=\frac{\GammaN(n/q-s)}{\GammaN(n/q)}\rn{\by}^{-n/q'}\\
 &=\frac{\GammaN(n/q-s)}{\GammaN(n/q)}h_0(\by)^q.
\end{align*}
The reflection identity \eqref{eq:reflection} yields
\[
 \frac{\GammaN(n/q-s)}{\GammaN(n/q)}
 =\frac{\GammaN(n/q')}{\GammaN(n/q'+s)}=\Msharp^{-1}.
\]
Apply Lemma~\ref{lem:Schur} with $h=h_0$ and $A=B=\Msharp^{-1}$.
\end{proof}

\begin{remark}
Since $h_0^{qq'}=\rn{\bx}^{-n}$, we have $h_0^{q'}\notin L^q$ and $h_0^q\notin L^{q'}$. This is allowed in Lemma~\ref{lem:Schur}: the kernel integrals converge absolutely away from the origin and satisfy \eqref{eq:eigenweights}.
\end{remark}

\begin{corollary}[Linear Hardy inequality]\label{cor:linearHardy}
Inequality \eqref{eq:mainlinear} holds for every $u\in\Sspace$.
\end{corollary}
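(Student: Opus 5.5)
The plan is to combine Riesz inversion with the operator norm bound of Proposition~\ref{prop:Schurweights}. Fix $u\in\Sspace$ and set $f=D^su$. By Proposition~\ref{prop:integral}, $f\in L^1\cap L^\infty$, so in particular $f\in L^q$ and $\|D^su\|_q=\|f\|_q<\infty$. Lemma~\ref{lem:inverse} applies because $0<s<n/q<n$. It gives $I_sf=u$ almost everywhere, with $I_sf$ converging absolutely at every point.

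Multiplying this identity by $\rn{\bx}^{-s}$ gives
\[
 \rn{\bx}^{-s}u(\bx)=\rn{\bx}^{-s}I_sf(\bx)=T_sf(\bx)
\]
almost everywhere. Since $f\in L^q$, Proposition~\ref{prop:Schurweights} (through Lemma~\ref{lem:Schur}) yields $\|T_sf\|_q\le\Msharp^{-1}\|f\|_q$. Raising this to the $q$-th power gives
\[
 \W_{s,q}[u]=\int\rn{\bx}^{-sq}|u(\bx)|^q\dd\bx=\|T_sf\|_q^q\le\Msharp^{-q}\|D^su\|_q^q.
\]
Multiplying through by $\Msharp^q$ gives \eqref{eq:mainlinear}.

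The only point needing care is that $T_sf$, defined as an integral operator on $L^q$, agrees with the pointwise expression $\rn{\bx}^{-s}I_sf$. This holds because Lemma~\ref{lem:inverse} already provides absolute convergence of $k_s*f$ at every $\bx$. Lemma~\ref{lem:Schur} extends $T_s$ to complex $f$ through the same absolutely convergent integral, since $|T_sf|\le T_s|f|$. So the two definitions coincide, and no density argument is needed. The argument has no real obstacle; the essential work was already done in the Schur weight computation.
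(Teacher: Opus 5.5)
Your proposal is correct and follows the paper's proof essentially verbatim: set $f=D^su$, use Proposition~\ref{prop:integral} for $f\in L^q$ and Lemma~\ref{lem:inverse} for $I_sf=u$ almost everywhere, then apply the Schur bound $\|T_sf\|_q\le\Msharp^{-1}\|f\|_q$. Your extra remark on why the pointwise $\rn{\bx}^{-s}I_sf$ agrees with the Schur-bounded $T_sf$ is a reasonable clarification that the paper leaves implicit.
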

\begin{proof}
Take $f=D^su$. Proposition~\ref{prop:integral} gives $f\in L^q$, and Lemma~\ref{lem:inverse} gives $I_sf=u$ almost everywhere. Therefore
\[
 \W_{s,q}[u]^{1/q}=\|T_sf\|_q
 \le\Msharp^{-1}\|f\|_q=\Msharp^{-1}\|D^su\|_q.
\]
Raising both sides to the power $q$ proves the claim.
\end{proof}

\subsection{The critical power function and the truncation error}\label{sec:linearsharp}
Set
\begin{equation*}
 w(\bx)=\rn{\bx}^{-(n/q-s)}.
\end{equation*}
Since $0<n/q-s<n-s$, Lemma~\ref{lem:power} gives
\begin{equation}\label{eq:linearground}
 D^sw(\bx)
 =\frac{\GammaN(n/q'+s)}{\GammaN(n/q')}\rn{\bx}^{-s}w(\bx)
 =\Msharp\rn{\bx}^{-s}w(\bx).
\end{equation}
The function $w$ satisfies \eqref{eq:linearground} but has infinite Hardy integral. Its annular truncations form a minimizing sequence.

Let $A_N$ be as in \eqref{eq:criticalseqintro}, put $u_N=w\ind_{A_N}$, and define
\begin{equation}\label{eq:linearerror}
 R_N=D^su_N-\Msharp\rn{\bx}^{-s}u_N.
\end{equation}
The identity $sq+q(n/q-s)=n$ gives
\begin{equation}\label{eq:linearweight}
 \W_{s,q}[u_N]=(2N+1)(1-p^{-n}).
\end{equation}

\begin{lemma}[Shell formula for the truncation error]\label{lem:exacterror}
For $\bx\in S_k$,
\begin{equation}\label{eq:exacterror}
 \frac{R_N(\bx)}{C_{n,s}(1-p^{-n})}=
 \begin{cases}
 -\displaystyle p^{(n+s)k}\sum_{\ell=-N}^{N}p^{-(n/q'+s)\ell},&k<-N,\\[5pt]
 \displaystyle p^{(n+s)k}\sum_{\ell>N}p^{-(n/q'+s)\ell}
       +\sum_{\ell<-N}p^{n\ell/q},&-N\le k\le N,\\[5pt]
 -\displaystyle\sum_{\ell=-N}^{N}p^{n\ell/q},&k>N.
 \end{cases}
\end{equation}
In particular, there is a constant $C$, depending only on $p,n,s,q$, such that
\begin{equation}\label{eq:errorbounds}
 |R_N(\bx)|\le C
 \begin{cases}
 p^{(n/q'+s)N}p^{(n+s)k},&k<-N,\\
 p^{-(n/q'+s)N}p^{(n+s)k}+p^{-nN/q},&-N\le k\le N,\\
 p^{nN/q},&k>N.
 \end{cases}
\end{equation}
\end{lemma}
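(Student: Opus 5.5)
For $\bx\in S_k$, the plan is to evaluate $D^su_N(\bx)$ from the integral representation \eqref{eq:Taiblesonintegral}, which converges absolutely since $u_N\in\Sspace$. The shells are then summed using \eqref{eq:ultradistance}. If $\bx\in S_k$ and $\by\in S_\ell$ with $\ell\ne k$, then $\rn{\bx-\by}=p^{-\min\{k,\ell\}}$, so the kernel is constant on each shell pair. Each shell integral is then an explicit power of $p$, computed with $|S_\ell|=(1-p^{-n})p^{-n\ell}$ and $w|_{S_\ell}=p^{(n/q-s)\ell}$. We split according to whether $\bx$ lies in $A_N$.

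\emph{Case $\bx\notin A_N$ ($k<-N$ or $k>N$).} Here $u_N(\bx)=0$, so the Hardy term in \eqref{eq:linearerror} vanishes and
\[
 R_N(\bx)=D^su_N(\bx)=-C_{n,s}\sum_{\ell=-N}^{N}\int_{S_\ell}\frac{w(\by)}{\rn{\bx-\by}^{n+s}}\dd\by .
\]
\begin{itemize}
\item If $k<-N$, then $k<\ell$ and the kernel equals $p^{(n+s)k}$. The $\ell$-th term is $(1-p^{-n})p^{(n+s)k}p^{-n\ell+(n/q-s)\ell}$, and $-n+n/q-s=-(n/q'+s)$.
\item If $k>N$, then $\ell<k$ and the kernel equals $p^{(n+s)\ell}$. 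The exponent becomes $-n\ell+(n/q-s)\ell+(n+s)\ell=n\ell/q$.
\end{itemize}
These give the first and third lines of \eqref{eq:exacterror}.

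\emph{Case $-N\le k\le N$.} Write $u_N=w-v$ with $v=w\ind_{\Qpn\setminus A_N}$. By Lemma~\ref{lem:power}, the integral for $D^sw(\bx)$ converges absolutely, since $0<n/q-s<n-s$. The integral for $D^su_N(\bx)$ also converges absolutely. Hence the integral for $v$ converges absolutely, and linearity gives $D^su_N=D^sw-D^sv$ pointwise. By \eqref{eq:linearground}, $D^sw(\bx)=\Msharp\rn{\bx}^{-s}w(\bx)=\Msharp\rn{\bx}^{-s}u_N(\bx)$. Therefore
\[
 R_N(\bx)=-D^sv(\bx)=C_{n,s}\int_{\Qpn\setminus A_N}\frac{w(\by)}{\rn{\bx-\by}^{n+s}}\dd\by ,
\]
using $v(\bx)=0$. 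In particular, the same-shell contribution drops out, which is the main point to check carefully.
\begin{itemize}
\item The inner shells $\ell>N\ge k$ have kernel $p^{(n+s)k}$ and contribute $p^{(n+s)k}p^{-(n/q'+s)\ell}$.
\item The outer shells $\ell<-N\le k$ have kernel $p^{(n+s)\ell}$ and contribute $p^{n\ell/q}$.
\end{itemize}
This is the middle line of \eqref{eq:exacterror}.

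\emph{Bounds.} Each sum in \eqref{eq:exacterror} is geometric, and since $n/q'+s>0$ and $n/q>0$ it is dominated by its extreme term.
\begin{itemize}
\item $\sum_{\ell=-N}^N p^{-(n/q'+s)\ell}\le Cp^{(n/q'+s)N}$.
\item $\sum_{\ell>N}p^{-(n/q'+s)\ell}\le Cp^{-(n/q'+s)N}$.
\item $\sum_{\ell<-N}p^{n\ell/q}\le Cp^{-nN/q}$.
\item $\sum_{\ell=-N}^Np^{n\ell/q}\le Cp^{nN/q}$.
\end{itemize}
The constants are the geometric-series factors, which depend only on $p,n,s,q$. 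This yields \eqref{eq:errorbounds}.

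The only genuine obstacle is justifying the pointwise splitting $D^su_N=D^sw-D^sv$ inside the annulus. I would handle it through the absolute convergence in Lemma~\ref{lem:power} and Proposition~\ref{prop:integral}. Alternatively, one can compute directly: the same shell $S_k$ contributes $0$, because $u_N$ is constant on $S_k$. The other shells then give exactly the complement sums after subtracting the full shell sum that produces $\Msharp\rn{\bx}^{-s}w$.
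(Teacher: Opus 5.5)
Your proposal is correct and follows the paper's proof essentially step for step. Like the paper, you evaluate $D^su_N(\bx)$ from the integral representation and use \eqref{eq:linearground} to reduce the annular case to the integral of $w$ over $A_N^c$. You then sum the shell contributions via \eqref{eq:ultradistance} and bound the geometric series by their extreme terms. Your explicit justification of the pointwise splitting $D^su_N=D^sw-D^sv$ through absolute convergence is a detail the paper leaves implicit, but the argument is the same.
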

\begin{proof}
Fix $\bx\in S_k$. For $\ell\ne k$, the identity $w|_{S_\ell}=p^{(n/q-s)\ell}$ and \eqref{eq:ultradistance} give
\begin{align*}
 \int_{S_\ell}\frac{w(\by)}{\rn{\bx-\by}^{n+s}}\dd\by
 &=(1-p^{-n})p^{-n\ell}p^{(n/q-s)\ell}p^{(n+s)\min\{k,\ell\}}\\
 &=(1-p^{-n})p^{-(n/q'+s)\ell+(n+s)\min\{k,\ell\}}.
\end{align*}
First suppose $-N\le k\le N$, so that $u_N(\bx)=w(\bx)$. By the integral representation and \eqref{eq:linearground},
\[
 R_N(\bx)=C_{n,s}\int_{A_N^c}\frac{w(\by)}{\rn{\bx-\by}^{n+s}}\dd \by.
\]
For $\ell>N$, the shell radius is smaller than $p^{-k}$, so its distance from $\bx$ is $p^{-k}$ and its contribution is $(1-p^{-n})p^{-(n/q'+s)\ell}p^{(n+s)k}$. For $\ell<-N$, the distance is $p^{-\ell}$ and the contribution is $(1-p^{-n})p^{n\ell/q}$. These sums give the middle line of \eqref{eq:exacterror}.

If $k\notin[-N,N]$, then $u_N(\bx)=0$ and
\[
 R_N(\bx)=-C_{n,s}\int_{A_N}\frac{w(\by)}{\rn{\bx-\by}^{n+s}}\dd \by.
\]
For $k<-N$, the distance from $\bx$ to $S_\ell\subset A_N$ is $p^{-k}$; for $k>N$, it is $p^{-\ell}$. Summation gives the first and third lines of \eqref{eq:exacterror}.

Finally, the geometric-series formulas give
\begin{gather*}
 \sum_{\ell>N}p^{-(n/q'+s)\ell}
 =\frac{p^{-(n/q'+s)N}}{p^{n/q'+s}-1},\qquad
 \sum_{\ell<-N}p^{n\ell/q}
 =\frac{p^{-nN/q}}{p^{n/q}-1},\\
 \sum_{\ell=-N}^{N}p^{-(n/q'+s)\ell}
 \le\frac{p^{(n/q'+s)N}}{1-p^{-(n/q'+s)}},\qquad
 \sum_{\ell=-N}^{N}p^{n\ell/q}
 \le\frac{p^{nN/q}}{1-p^{-n/q}}.
\end{gather*}
Substitution into \eqref{eq:exacterror} proves \eqref{eq:errorbounds}.
\end{proof}

\begin{lemma}[Uniform $L^q$ bound for the error]\label{lem:uniformerror}
There is a constant $C=C(p,n,s,q)$ such that
\begin{equation}\label{eq:uniformerror}
 \|R_N\|_{L^q}\le C\qquad(N\ge1).
\end{equation}
\end{lemma}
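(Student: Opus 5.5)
Since $R_N$ is radial, I will compute $\|R_N\|_q^q$ shell by shell with \eqref{eq:shellsum}, inserting the pointwise bounds \eqref{eq:errorbounds} of Lemma~\ref{lem:exacterror}:
\[
 \|R_N\|_q^q=(1-p^{-n})\sum_{k\in\mathbb Z}p^{-nk}\,|R_N|_{S_k}|^q .
\]
I split the sum over $k$ into the three ranges $k<-N$, $-N\le k\le N$, and $k>N$. In each range I will check that the exponents combine into a convergent geometric series whose sum is independent of $N$. The inequality $(a+b)^q\le 2^{q-1}(a^q+b^q)$ separates the two terms in the middle range.

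\emph{Outer region $k<-N$.} Each term is bounded by $C p^{q(n/q'+s)N}p^{(q(n+s)-n)k}$. The exponent satisfies $q(n+s)-n=n(q-1)+qs>0$, so the series over $k<-N$ is dominated by its term at $k=-N-1$ and is of order $p^{-(q(n+s)-n)N}$. The product of the two powers of $p$ has exponent
\[
 q(n/q'+s)-q(n+s)+n=n(q-1)-qn+n=0 .
\]
This contribution is therefore $O(1)$.

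\emph{Inner region $k>N$.} Each term is bounded by $C p^{nN}p^{-nk}$. Summing over $k>N$ gives $O(1)$.

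\emph{Middle region, first term.} This piece is
\[
 p^{-q(n/q'+s)N}\sum_{k=-N}^{N}p^{(q(n+s)-n)k}.
\]
The sum is dominated by its term at $k=N$, which is of order $p^{(q(n+s)-n)N}$. Combining with the prefactor gives total exponent $0$, as in the outer region, so this piece is $O(1)$.

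\emph{Middle region, second term.} This piece is
\[
 p^{-nN}\sum_{k=-N}^{N}p^{-nk}.
\]
The sum is dominated by its term at $k=-N$, of order $p^{nN}$, so this piece is also $O(1)$.

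Adding the four pieces gives $\|R_N\|_q^q\le C$ with $C$ depending only on $p,n,s,q$, which proves \eqref{eq:uniformerror}.

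The only delicate point is the exponent bookkeeping. The bound works because the positive growth rates $q(n+s)-n$ and $n$ balance exactly against the prefactors. In every case this balance comes from the identity $n/q'+n/q=n$. The conditions $s>0$ and $q>1$ guarantee strict positivity of the exponents, so each geometric series converges with constants independent of $N$.
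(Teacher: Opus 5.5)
Your proposal is correct and follows essentially the same route as the paper. You integrate the shellwise bounds \eqref{eq:errorbounds} over the three regions $k<-N$, $-N\le k\le N$, $k>N$, split the middle term with $(a+b)^q\le 2^{q-1}(a^q+b^q)$, and use that the exponent $q(n+s)-n=q(n/q'+s)>0$ exactly cancels the $N$-dependent prefactors, which leaves geometric series bounded independently of $N$.
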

\begin{proof}
Since $q'=q/(q-1)$,
\begin{equation*}
 q(n/q'+s)=q(n+s)-n>0.
\end{equation*}
Apply \eqref{eq:errorbounds} on the three regions. The shell measure factor $p^{-nk}$ combines with $p^{q(n+s)k}$ from the $q$th power of the error bound to give $p^{q(n/q'+s)k}$. Factors independent of $N$ are absorbed into $C$.

On the outer region, set $k=-N-j$. Then
\begin{align*}
 \int_{\Qpn\setminus B_{-N}}|R_N(\bx)|^q\dd\bx
 &\le C p^{q(n/q'+s)N}\sum_{k<-N}p^{q(n/q'+s)k}\\
 &=C\sum_{j=1}^{\infty}p^{-q(n/q'+s)j}\\
 &=\frac{C}{p^{q(n/q'+s)}-1}.
\end{align*}
On $A_N$, use $(A+B)^q\le2^{q-1}(A^q+B^q)$ and set $j=N-k$ in the first sum and $j=k+N$ in the second. This gives
\begin{align*}
 \int_{A_N}|R_N(\bx)|^q\dd\bx
 &\le C\left(p^{-q(n/q'+s)N}\sum_{k=-N}^{N}p^{q(n/q'+s)k}
       +p^{-nN}\sum_{k=-N}^{N}p^{-nk}\right)\\
 &=C\sum_{j=0}^{2N}\left(p^{-q(n/q'+s)j}+p^{-nj}\right)\\
 &\le C\left(\frac1{1-p^{-q(n/q'+s)}}+\frac1{1-p^{-n}}\right).
\end{align*}
On the inner region, set $k=N+j$ to obtain
\[
 \int_{B_{N+1}}|R_N(\bx)|^q\dd\bx
 \le C p^{nN}\sum_{k>N}p^{-nk}
 =C\sum_{j=1}^{\infty}p^{-nj}=\frac{C}{p^n-1}.
\]
The three regions partition $\Qpn$ and the bounds are uniform in $N$, proving \eqref{eq:uniformerror}.
\end{proof}

\begin{proposition}[Sharpness of the linear constant]\label{prop:linearsharp}
For the sequence $u_N$ defined above,
\begin{equation}\label{eq:linearratio}
 \frac{\|D^su_N\|_q}{\W_{s,q}[u_N]^{1/q}}
 \longrightarrow\Msharp.
\end{equation}
Consequently, the constant in \eqref{eq:mainlinear} is sharp, and the operator norm is given by \eqref{eq:mainnorm}.
\end{proposition}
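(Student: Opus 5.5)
By the definition \eqref{eq:linearerror}, we can split
\[
 D^su_N=\Msharp\rn{\bx}^{-s}u_N+R_N .
\]
The first term has $L^q$ norm exactly $\Msharp\,\W_{s,q}[u_N]^{1/q}$, and by \eqref{eq:linearweight} this equals $\Msharp[(2N+1)(1-p^{-n})]^{1/q}$. Minkowski's inequality, in both directions, together with Lemma~\ref{lem:uniformerror} then gives
\[
 \left|\frac{\|D^su_N\|_q}{\W_{s,q}[u_N]^{1/q}}-\Msharp\right|
 \le\frac{\|R_N\|_q}{\W_{s,q}[u_N]^{1/q}}
 \le\frac{C}{[(2N+1)(1-p^{-n})]^{1/q}}\longrightarrow0 .
\]
This proves \eqref{eq:linearratio}, with an explicit rate $O(N^{-1/q})$. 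Before using this, I would check that $u_N\in\Sspace$: the annulus $A_N$ is compact and open and avoids the origin, and $w$ is constant on each shell. This makes Proposition~\ref{prop:integral} and Lemma~\ref{lem:exacterror} applicable to $u_N$.

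Sharpness of $\Msharp^q$ in \eqref{eq:mainlinear} follows at once. If some $C>\Msharp^q$ satisfied $C\W_{s,q}[u]\le\|D^su\|_q^q$ on $\Sspace$, then applying it to $u_N$ and letting $N\to\infty$ would give $C\le\Msharp^q$, a contradiction.

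For the operator norm \eqref{eq:mainnorm}, Proposition~\ref{prop:Schurweights} already gives $\|T_s\|\le\Msharp^{-1}$, so only the lower bound remains. Take $f_N=D^su_N$. Proposition~\ref{prop:integral} gives $f_N\in L^q$, and Lemma~\ref{lem:inverse} gives $T_sf_N=\rn{\bx}^{-s}I_sD^su_N=\rn{\bx}^{-s}u_N$ almost everywhere. Hence
\[
 \frac{\|T_sf_N\|_q}{\|f_N\|_q}
 =\frac{\W_{s,q}[u_N]^{1/q}}{\|D^su_N\|_q}\longrightarrow\Msharp^{-1},
\]
which yields $\|T_s\|\ge\Msharp^{-1}$.

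The substantive input, the uniform error bound, is already in Lemma~\ref{lem:uniformerror}, so no step here is a real obstacle. The only care needed is to justify that $I_s$ inverts $D^s$ on $u_N$. This holds because $u_N$ is a genuine test function, so no extension of Lemma~\ref{lem:inverse} is needed.
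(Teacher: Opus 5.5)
Your proposal is correct and follows essentially the same route as the paper: you split off $R_N$, apply the reverse triangle inequality together with the uniform bound of Lemma~\ref{lem:uniformerror}, and obtain the operator-norm lower bound by testing $T_s$ on $f_N=D^su_N$ via Lemma~\ref{lem:inverse}. Your explicit check that $u_N\in\Sspace$ and the resulting $O(N^{-1/q})$ rate are small details that the paper leaves implicit.
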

\begin{proof}
By \eqref{eq:linearerror} and the reverse triangle inequality,
\[
 \left|\|D^su_N\|_q-\Msharp\W_{s,q}[u_N]^{1/q}\right|
 \le\|R_N\|_q\le C.
\]
Divide by $\W_{s,q}[u_N]^{1/q}=\bigl((2N+1)(1-p^{-n})\bigr)^{1/q}$ and let $N\to\infty$ to obtain \eqref{eq:linearratio}. Together with Corollary~\ref{cor:linearHardy}, this proves that $\Msharp^q$ is the sharp constant in \eqref{eq:mainlinear}.

Set $f_N=D^su_N$. Lemma~\ref{lem:inverse} gives $T_sf_N=\rn{\bx}^{-s}u_N$, so
\[
 \frac{\|T_sf_N\|_q}{\|f_N\|_q}
 =\frac{\W_{s,q}[u_N]^{1/q}}{\|D^su_N\|_q}
 \longrightarrow\Msharp^{-1}.
\]
Combining this limit with the upper bound from Proposition~\ref{prop:Schurweights} proves \eqref{eq:mainnorm}.
\end{proof}

\begin{corollary}[An alternative expression for the linear constant]\label{cor:alphaparam}
Let $\alpha=sq\in(0,n)$. Then
\begin{equation}\label{eq:alphaM}
 M_{p,n,\alpha/q,q}
 =\frac{(1-p^{-(n-\alpha)/q})(1-p^{-n(q-1)/q})}
 {(1-p^{-[n(q-1)+\alpha]/q})(1-p^{-n/q})}.
\end{equation}
Thus, with $\alpha$ as the weight exponent, the sharp constant in the linear Hardy inequality is $M_{p,n,\alpha/q,q}^q$.
\end{corollary}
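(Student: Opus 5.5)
The corollary is a direct computation from the definition of $\Msharp$ in \eqref{eq:mainlinear} and the Gamma factor \eqref{eq:Gammadef}, so I will just substitute $s=\alpha/q$ and simplify. The condition $\alpha=sq\in(0,n)$ is exactly the hypothesis $0<s<n/q$ of Theorem~\ref{thm:mainlinear}. So $M_{p,n,\alpha/q,q}$ is defined, and the last sentence of the corollary is a restatement of that theorem in the new parameter.

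First I compute the two arguments of the Gamma factors. Since $n/q'=n(q-1)/q$, we get
\[
 \frac{n}{q'}+s=\frac{n(q-1)+\alpha}{q}.
\]
Then
\[
 n-\Bigl(\frac{n}{q'}+s\Bigr)=\frac{n-\alpha}{q},
 \qquad
 n-\frac{n}{q'}=\frac nq.
\]
Both arguments lie in $(0,n)$, so every factor below is positive and there are no sign issues.

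Next I apply $\GammaN(z)=(1-p^{z-n})/(1-p^{-z})$:
\[
 \GammaN\Bigl(\frac{n(q-1)+\alpha}{q}\Bigr)
 =\frac{1-p^{-(n-\alpha)/q}}{1-p^{-[n(q-1)+\alpha]/q}},
 \qquad
 \GammaN\Bigl(\frac{n(q-1)}{q}\Bigr)
 =\frac{1-p^{-n/q}}{1-p^{-n(q-1)/q}}.
\]
Taking the quotient of the first by the second gives exactly the right-hand side of \eqref{eq:alphaM}.

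Nothing here is a real obstacle. The only step needing care is tracking the exponents $z-n$ and $-z$ correctly, which the displayed identities handle.
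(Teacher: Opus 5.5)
Your proposal is correct and follows the paper's proof: both substitute $s=\alpha/q$ and $n/q'=n(q-1)/q$ into the definition \eqref{eq:Gammadef} of $\GammaN$ and take the quotient $\GammaN(n/q'+s)/\GammaN(n/q')$. Your exponent bookkeeping matches the paper's, and your remark that the final sentence only restates Theorem~\ref{thm:mainlinear} under the reparametrization is accurate.
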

\begin{proof}
By \eqref{eq:Gammadef},
\[
 \frac{\GammaN(n/q'+s)}{\GammaN(n/q')}
 =\frac{1-p^{s-n/q}}{1-p^{-n/q'-s}}
  \frac{1-p^{-n/q'}}{1-p^{-n/q}}.
\]
Substitute $s=\alpha/q$ and $n/q'=n(q-1)/q$ to obtain \eqref{eq:alphaM}.
\end{proof}

\subsection{Equality and nonattainment of the operator norm}
\begin{proposition}[Equality case]\label{prop:nonattainlinear}
For $f\in L^q(\Qpn)$, the equality $\|T_sf\|_q=\Msharp^{-1}\|f\|_q$ holds if and only if $f=0$ almost everywhere. Consequently, equality in \eqref{eq:mainlinear} for $u\in\Sspace$ holds if and only if $u\equiv0$.
\end{proposition}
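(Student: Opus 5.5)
We analyse the equality case of the Schur argument in Lemma~\ref{lem:Schur}, applied with $h=h_0$ and $A=B=\Msharp^{-1}$. Since $|T_sf|\le T_s|f|$ pointwise, equality $\|T_sf\|_q=\Msharp^{-1}\|f\|_q$ forces $\|T_s|f|\|_q=\Msharp^{-1}\||f|\|_q$. We may therefore assume $f\ge0$ and $f\ne0$, and derive a contradiction.

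In the proof of Lemma~\ref{lem:Schur}, two inequalities were used. The second, $T(h^{q'})\le Ah^{q'}$, is actually an equality here by \eqref{eq:eigenweights}, and so is the adjoint bound. Hence equality of norms forces equality almost everywhere in the pointwise H\"older step. For almost every $\bx$, that step reads
\[
 \Big(\int K_s(\bx,\by)f(\by)\dd\by\Big)^q\le\Big(\int K_s(\bx,\by)h_0(\by)^{q'}\dd\by\Big)^{q-1}\int K_s(\bx,\by)f(\by)^qh_0(\by)^{-q}\dd\by .
\]
Both sides are finite almost everywhere: the left because $T_sf\in L^q$, the right because its integral is $A^{q-1}B\|f\|_q^q$. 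H\"older's inequality with exponents $q,q'$ in the measure $K_s(\bx,\by)\dd\by$ is an equality only if $(f h_0^{-1})^q$ and $h_0^{q'}$ are proportional $K_s(\bx,\cdot)\dd\by$-almost everywhere. Since $K_s(\bx,\by)>0$ for almost every $\by$, this holds Lebesgue-almost everywhere in $\by$. So there is $c(\bx)\ge0$ with $f(\by)^q=c(\bx)h_0(\by)^{q'+q}$ for almost every $\by$.

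Because the left side does not depend on $\bx$, a Fubini argument over pairs $(\bx,\by)$ makes $c$ constant almost everywhere. We obtain $f=c^{1/q}h_0^{(q'+q)/q}=c^{1/q}h_0^{q'}$, using $(q+q')/q=q'$. This gives $f=c^{1/q}\rn{\by}^{-n/q}$. But $\int\rn{\by}^{-n}\dd\by=(1-p^{-n})\sum_{k\in\mathbb Z}1=\infty$, so this $f$ is not in $L^q$ unless $c=0$. Hence $f=0$ almost everywhere, which contradicts $f\ne0$.

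The delicate step is justifying that equality of the integrated inequality forces pointwise H\"older equality. We handle it by integrating the nonnegative difference between the right side and the left side of the pointwise inequality. By Tonelli, the integral of the right side equals $A^{q-1}B\|f\|_q^q$ exactly, with both Schur identities being equalities. If the norms are equal, this difference has zero integral, so it vanishes almost everywhere. Finiteness is automatic because $f\in L^q$.

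For the Hardy inequality, suppose $u\in\Sspace$ attains equality in \eqref{eq:mainlinear}. Set $f=D^su\in L^q$ by Proposition~\ref{prop:integral}. Lemma~\ref{lem:inverse} gives $T_sf=\rn{\bx}^{-s}u$, so $\|T_sf\|_q=\Msharp^{-1}\|f\|_q$. The first part then gives $f=0$, hence $u=I_sf=0$ almost everywhere. By local constancy, $u\equiv0$. The converse, that $f=0$ and $u\equiv0$ attain equality, is trivial.
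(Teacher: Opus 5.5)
Your proof is correct and follows the paper's argument essentially step for step: you reduce to $|f|$, note that both Schur weight bounds \eqref{eq:eigenweights} are identities so the integrated H\"older defect vanishes, apply the H\"older equality case at a point $\bx$ using $K_s(\bx,\cdot)>0$, and conclude $|f|=c\,\rn{\by}^{-n/q}\notin L^q$. The only difference is your Fubini step making $c(\bx)$ independent of $\bx$, which is harmless but unnecessary, since the paper simply fixes a single good $\bx$ and already gets the contradiction there.
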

\begin{proof}
Suppose a nonzero $f\in L^q$ satisfies $\|T_sf\|_q=\Msharp^{-1}\|f\|_q$. Since
\[
 \|T_sf\|_q\le\|T_s|f|\|_q\le\Msharp^{-1}\|f\|_q,
\]
the norm bound is also attained by $|f|$. Take $h=h_0$ in the proof of Lemma~\ref{lem:Schur}. Both weight estimates are identities by \eqref{eq:eigenweights}, so
\begin{align*}
 \int\Bigg[\Msharp^{-(q-1)}h_0(\bx)^q
       \int K_s(\bx,\by)|f(\by)|^qh_0(\by)^{-q}\dd\by
       -(T_s|f|(\bx))^q\Bigg]\dd\bx
 =\Msharp^{-q}\|f\|_q^q-\|T_s|f|\|_q^q=0.
\end{align*}
The bracket is nonnegative by H\"older's inequality and hence vanishes for almost every $\bx$.

Fix a nonzero point $\bx$ at which this remainder vanishes and all the relevant integrals are finite. Since $K_s(\bx,\by)>0$ for almost every $\by$, the equality case in H\"older's inequality gives a constant $a_{\bx}>0$ such that
\[
 |f(\by)|^q h_0(\by)^{-q}=a_{\bx}h_0(\by)^{q'}
 \quad\text{for almost every }\by.
\]
Using $(q+q')/q=q'$, we obtain
\[
 |f(\by)|=a_{\bx}^{1/q}h_0(\by)^{q'}=a_{\bx}^{1/q}\rn{\by}^{-n/q}.
\]
The constant is positive since $f\ne0$. But
\[
 \int\bigl(\rn{\by}^{-n/q}\bigr)^q\dd \by
 =\int\rn{\by}^{-n}\dd \by=\infty,
\]
which contradicts $f\in L^q$.

Now suppose $u\in\Sspace$ attains equality in \eqref{eq:mainlinear}, and set $f=D^su$. Inversion gives $T_sf=\rn{\bx}^{-s}u$, and hence $\|T_sf\|_q=\Msharp^{-1}\|f\|_q$. Thus $f=0$ almost everywhere. Since $u=I_sf$, we obtain $u=0$ almost everywhere and, by local constancy, $u\equiv0$. The zero function attains equality in both estimates.
\end{proof}

Corollary~\ref{cor:linearHardy} and Propositions~\ref{prop:linearsharp} and \ref{prop:nonattainlinear} complete the proof of Theorem~\ref{thm:mainlinear}.

\section{The sharp Hardy inequality for the nonlinear difference energy}\label{sec:nonlinear}
We prove Theorem~\ref{thm:mainnonlinear} under the assumptions \eqref{eq:qrange}. As before, let $w(\bx)=\rn{\bx}^{-(n/q-s)}$. For $t\in\mathbb R$, write
\begin{equation*}
 J_q(t)=|t|^{q-2}t\quad(t\ne0),\qquad J_q(0)=0.
\end{equation*}
For the critical power function $w$, define
\begin{equation}\label{eq:Lnonlinear}
 \mathscr L_{s,q}w(\bx)
 =2\int_{\Qpn}\frac{J_q(w(\bx)-w(\by))}{\rn{\bx-\by}^{n+sq}}\dd \by,
 \qquad \bx\ne\boldsymbol0.
\end{equation}
Only its action on $w$ is used below; the next lemma gives absolute convergence.

\subsection{The nonlinear equation for the critical power function}
\begin{lemma}[Absolute integral estimate]\label{lem:nonlinearabs}
For $\bx\ne\boldsymbol0$,
\begin{equation}\label{eq:nonlinearabs}
 \begin{aligned}
 &2\int\frac{|w(\bx)-w(\by)|^{q-1}}{\rn{\bx-\by}^{n+sq}}\dd \by\\
 =&2(1-p^{-n})\rn{\bx}^{-sq}w(\bx)^{q-1}\sum_{m=1}^{\infty}p^{-sqm}
 \bigl(1-p^{-(n/q-s)m}\bigr)^{q-1}\bigl(1+p^{-(n/q-s)m}\bigr).
 \end{aligned}
\end{equation}
The series converges, and
\begin{equation}\label{eq:nonlinearabsbound}
 2\int\frac{|w(\bx)-w(\by)|^{q-1}}{\rn{\bx-\by}^{n+sq}}\dd \by
 \le\frac{4(1-p^{-n})}{p^{sq}-1}\rn{\bx}^{-sq}w(\bx)^{q-1}.
\end{equation}
\end{lemma}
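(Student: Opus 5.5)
The plan is to mirror the proof of \eqref{eq:L2abs} in Lemma~\ref{lem:L2ground}: compute the integral first at a point with $\rn{\bx}=1$ by shell decomposition, then recover the general case by scaling. Set $\gamma=n/q-s>0$, so $w=\rn{\cdot}^{-\gamma}$. If $\rn{\bx}=1$, then $w(\bx)=1$, and $\by$ on the same shell $S_0$ gives $w(\by)=w(\bx)$, so that shell contributes nothing. By \eqref{eq:ultradistance}, for $\by\in S_\ell$ with $\ell\ne0$ we have $\rn{\bx-\by}=p^{-\min\{0,\ell\}}$, and the integrand is constant on each such shell.

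First I would sum the outer shells $S_{-m}$, $m\ge1$. These have measure $(1-p^{-n})p^{nm}$, kernel value $p^{-(n+sq)m}$, and difference $|1-p^{-\gamma m}|^{q-1}$, giving
\[
 (1-p^{-n})p^{-sqm}(1-p^{-\gamma m})^{q-1}.
\]
Next I would sum the inner shells $S_m$, $m\ge1$. These have measure $(1-p^{-n})p^{-nm}$, kernel value $1$, and difference $(p^{\gamma m}-1)^{q-1}=p^{\gamma(q-1)m}(1-p^{-\gamma m})^{q-1}$. The key exponent identity is
\[
 -n+\gamma(q-1)=-n+n-sq-\gamma=-sq-\gamma,
\]
which uses $\gamma q=n-sq$. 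So the inner contribution is $(1-p^{-n})p^{-sqm}p^{-\gamma m}(1-p^{-\gamma m})^{q-1}$. Adding the two contributions for each $m$ and multiplying by $2$ gives the series in \eqref{eq:nonlinearabs} at $\rn{\bx}=1$. All terms are nonnegative, so Tonelli's theorem justifies summing shell by shell.

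For a general point write $\bx=p^j\bx_0$ with $\rn{\bx_0}=1$, and substitute $\by=p^j\bz$. The difference raised to the power $q-1$ contributes $p^{\gamma(q-1)j}$, the kernel contributes $p^{(n+sq)j}$, and the measure contributes $p^{-nj}$. The product is $p^{(sq+\gamma(q-1))j}=\rn{\bx}^{-sq}w(\bx)^{q-1}$, which proves \eqref{eq:nonlinearabs}.

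For \eqref{eq:nonlinearabsbound}, bound $(1-p^{-\gamma m})^{q-1}\le1$ and $1+p^{-\gamma m}\le2$, and use $\sum_{m\ge1}p^{-sqm}=1/(p^{sq}-1)$; this also shows the series converges. The only delicate step is the exponent bookkeeping on the inner shells, which is what makes both contributions share the common factor $p^{-sqm}(1-p^{-\gamma m})^{q-1}$.
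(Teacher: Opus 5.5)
Your proposal is correct and follows the paper's proof essentially step for step. Both compute on the unit shell, summing the outer shells $S_{-m}$ and inner shells $S_m$ and using the exponent identity $n-(n/q-s)(q-1)=sq+n/q-s$ to extract the common factor $p^{-sqm}(1-p^{-(n/q-s)m})^{q-1}$; both then bound via $(1-p^{-(n/q-s)m})^{q-1}(1+p^{-(n/q-s)m})\le 2$ and rescale with $\bx=p^j\bx_0$, $\by=p^j\bz$, and your explicit appeal to Tonelli for summing the nonnegative integrand shell by shell is a justification the paper leaves implicit.
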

\begin{proof}
If $\rn{\bx}=1$, then $w(\bx)=1$. The distance from $\bx$ to $S_{-m}$, $m\ge1$, is $p^m$, so this shell contributes
\[
 (1-p^{-n})p^{-sqm}\bigl(1-p^{-(n/q-s)m}\bigr)^{q-1}.
\]
The distance to $S_m$ is $1$, and its contribution is
\begin{align*}
 (1-p^{-n})p^{-nm}\bigl(p^{(n/q-s)m}-1\bigr)^{q-1}=(1-p^{-n})p^{-[n-(n/q-s)(q-1)]m}
 \bigl(1-p^{-(n/q-s)m}\bigr)^{q-1}.
\end{align*}
The exponents satisfy
\begin{equation}\label{eq:nonlinearbalance}
 n-(n/q-s)(q-1)=sq+n/q-s.
\end{equation}
Adding the contributions and multiplying by $2$ gives \eqref{eq:nonlinearabs}: the shell containing $\bx$ contributes zero. This proves the identity for $\rn{\bx}=1$. Since $n/q-s>0$,
\[
 \bigl(1-p^{-(n/q-s)m}\bigr)^{q-1}\bigl(1+p^{-(n/q-s)m}\bigr)\le2,
\]
so the $m$th contribution, including the prefactor, is at most $4(1-p^{-n})p^{-sqm}$. Summing over $m$ proves \eqref{eq:nonlinearabsbound}.
For general $\bx\ne\boldsymbol0$, write $\bx=p^k\bx_0$, with $\rn{\bx_0}=1$, and set $\by=p^k\bz$. The scaling identity $w(p^k\bz)=p^{(n/q-s)k}w(\bz)$ gives
\begin{align*}
 \int\frac{|w(\bx)-w(\by)|^{q-1}}{\rn{\bx-\by}^{n+sq}}\dd\by
 &=p^{[sq+(n/q-s)(q-1)]k}
 \int\frac{|1-w(\bz)|^{q-1}}{\rn{\bx_0-\bz}^{n+sq}}\dd\bz.
\end{align*}
The prefactor equals $\rn{\bx}^{-sq}w(\bx)^{q-1}$, giving both formulas.
\end{proof}

\begin{proposition}[Nonlinear equation for the critical power function]\label{prop:nonlinearground}
For every $\bx\ne\boldsymbol0$,
\begin{equation*}
 \mathscr L_{s,q}w(\bx)=\Hsharp\rn{\bx}^{-sq}w(\bx)^{q-1},
\end{equation*}
where $\Hsharp$ is defined by \eqref{eq:Hseries}.
\end{proposition}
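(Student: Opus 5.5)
The plan is to follow the scheme of Lemma~\ref{lem:nonlinearabs} and compute the signed integral shell by shell, first at $\rn{\bx}=1$ and then everywhere by scaling. Lemma~\ref{lem:nonlinearabs} shows that the integral defining $\mathscr L_{s,q}w(\bx)$ converges absolutely at every $\bx\ne\boldsymbol0$. We may therefore split it over the shells $S_\ell$ and sum the contributions in any order.

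Fix $\rn{\bx}=1$, so $w(\bx)=1$, and write $\beta=n/q-s>0$.

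\textbf{Same shell.} On $S_0$ we have $w(\by)=1$, so the integrand vanishes.

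\textbf{Outer shells.} For $\by\in S_{-m}$ with $m\ge1$, we have $w(\by)=p^{-\beta m}<1$, so $J_q(w(\bx)-w(\by))=(1-p^{-\beta m})^{q-1}$ is positive. By \eqref{eq:ultradistance} the distance is $p^m$. This shell contributes $(1-p^{-n})p^{-sqm}(1-p^{-\beta m})^{q-1}$.

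\textbf{Inner shells.} For $\by\in S_m$ with $m\ge1$, we have $w(\by)=p^{\beta m}>1$, so the sign is negative and $J_q=-(p^{\beta m}-1)^{q-1}$. The distance is $1$. Exactly as in Lemma~\ref{lem:nonlinearabs}, using \eqref{eq:nonlinearbalance}, the contribution is
\[
 -(1-p^{-n})\,p^{-(sq+\beta)m}\bigl(1-p^{-\beta m}\bigr)^{q-1}.
\]

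\textbf{Pairing.} Pair the two contributions with the same $m$ and factor out $(1-p^{-n})p^{-sqm}(1-p^{-\beta m})^{q-1}$. What remains is $1-p^{-\beta m}$, so the combined $m$th term is
\[
 (1-p^{-n})\,p^{-sqm}\bigl(1-p^{-\beta m}\bigr)^{q}.
\]
This is the step where the sign matters. In Lemma~\ref{lem:nonlinearabs} the factor was $1+p^{-\beta m}$; here the inner term enters with a minus sign, giving $1-p^{-\beta m}$, which raises the exponent from $q-1$ to $q$. Multiplying by the prefactor $2$ in \eqref{eq:Lnonlinear} and summing over $m\ge1$ gives exactly $\Hsharp$ from \eqref{eq:Hseries}. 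Absolute convergence justifies the rearrangement into pairs.

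\textbf{Scaling.} For general $\bx=p^k\bx_0$ with $\rn{\bx_0}=1$, substitute $\by=p^k\bz$. Since $J_q$ is odd and homogeneous of degree $q-1$ on positive scalings, $J_q(\lambda t)=\lambda^{q-1}J_q(t)$ for $\lambda>0$. The same scaling bookkeeping as at the end of Lemma~\ref{lem:nonlinearabs} produces the prefactor $p^{[sq+\beta(q-1)]k}=\rn{\bx}^{-sq}w(\bx)^{q-1}$.

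The only real obstacle is the sign bookkeeping in the pairing step. Convergence is already settled by Lemma~\ref{lem:nonlinearabs}.
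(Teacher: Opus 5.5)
Your proposal is correct and follows the paper's own proof essentially step for step. The paper also takes absolute convergence from Lemma~\ref{lem:nonlinearabs} and computes the outer and inner shell contributions at $\rn{\bx}=1$. It combines them via \eqref{eq:nonlinearbalance} into $(1-p^{-n})p^{-sqm}\bigl(1-p^{-(n/q-s)m}\bigr)^q$, sums over $m$ with the factor $2$, and extends to all $\bx\ne\boldsymbol0$ using the homogeneity $J_q(ct)=c^{q-1}J_q(t)$.
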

\begin{proof}
By Lemma~\ref{lem:nonlinearabs}, the integral converges absolutely. For $\rn{\bx}=1$, the outer shell $S_{-m}$ contributes
\[
 (1-p^{-n})p^{-sqm}\bigl(1-p^{-(n/q-s)m}\bigr)^{q-1};
\]
and the inner shell $S_m$ contributes
\begin{align*}
 -(1-p^{-n})p^{-nm}\bigl(p^{(n/q-s)m}-1\bigr)^{q-1}=-(1-p^{-n})p^{-(sq+n/q-s)m}
 \bigl(1-p^{-(n/q-s)m}\bigr)^{q-1}.
\end{align*}
By \eqref{eq:nonlinearbalance}, their sum is
\[
 (1-p^{-n})p^{-sqm}\bigl(1-p^{-(n/q-s)m}\bigr)^q.
\]
Summing over $m\ge1$ and multiplying by the factor $2$ in \eqref{eq:Lnonlinear} proves the equation on the unit shell. The homogeneity $J_q(ct)=c^{q-1}J_q(t)$ for $c>0$ gives the same scaling factor $\rn{\bx}^{-sq}w(\bx)^{q-1}$ as in Lemma~\ref{lem:nonlinearabs}, proving the general case.
\end{proof}

\subsection{Picone's inequality and the remainder identity}
\begin{lemma}[Two-point Picone inequality]\label{lem:Picone}
Let $q>1$, $a,b\ge0$, and $r,t>0$. Then
\begin{equation*}
 |a-b|^q\ge J_q(r-t)
 \left(\frac{a^q}{r^{q-1}}-\frac{b^q}{t^{q-1}}\right).
\end{equation*}
Equality holds if and only if $a/r=b/t$.
\end{lemma}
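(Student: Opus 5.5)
Since $q>1$, the inequality is scale-invariant under $(a,b,r,t)\mapsto(\lambda a,\lambda b,\mu r,\mu t)$ for $\lambda\ge0$, $\mu>0$: both sides scale by $\lambda^q$. The plan is to reduce to a one-variable convexity statement. The right-hand side is antisymmetric under the swap $(a,r)\leftrightarrow(b,t)$ in both factors, so it is symmetric, and the left-hand side is symmetric too. We may therefore assume $r\ge t$. If $r=t$, the right-hand side is $0$ and the inequality is trivial; equality holds exactly when $a=b$, i.e.\ $a/r=b/t$. So assume $r>t$, and normalize $t=1$ and $r=\rho>1$.

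Write $a=\rho\sigma$ and $b=\tau$, so that $\sigma=a/r$ and $\tau=b/t$. The claim becomes
\[
 |\rho\sigma-\tau|^q\ge(\rho-1)^{q-1}\bigl(\rho\sigma^q-\tau^q\bigr),\qquad \sigma,\tau\ge0.
\]
The key step is convexity of $\phi(x)=|x|^q$ applied to the decomposition
\[
 \sigma=\frac{1}{\rho}\,(\rho\sigma-\tau)+\frac{\rho-1}{\rho}\cdot\frac{\tau}{\rho-1},
\]
a convex combination with weights $1/\rho$ and $(\rho-1)/\rho$. Jensen's inequality gives
\[
 \sigma^q\le\frac1\rho|\rho\sigma-\tau|^q+\frac{\rho-1}{\rho}\Bigl(\frac{\tau}{\rho-1}\Bigr)^q .
\]
Multiply by $\rho(\rho-1)^{q-1}$ and rearrange to get
\[
 (\rho-1)^{q-1}|\rho\sigma-\tau|^q\ge(\rho-1)^{q-1}\rho\sigma^q-\tau^q .
\]
This is not yet the target, so the choice of weights needs adjusting. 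The right choice is to write $\rho\sigma$, rather than $\sigma$, as a convex combination. Put $\theta=1/\rho\in(0,1)$ and write
\[
 \rho\sigma\cdot\theta\;\cdots
\]
To find the correct decomposition, I will instead minimize $g(\tau)=|\rho\sigma-\tau|^q+\tau^q(\rho-1)^{q-1}$ over $\tau\ge0$ for fixed $\sigma$, since the target is $g(\tau)\ge\rho(\rho-1)^{q-1}\sigma^q$.

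The function $g$ is strictly convex in $\tau$. Its critical point satisfies
\[
 J_q(\tau-\rho\sigma)+(\rho-1)^{q-1}\tau^{q-1}=0,
\]
so $\tau<\rho\sigma$ and $\rho\sigma-\tau=(\rho-1)\tau$, which gives $\tau=\sigma$. At this point,
\[
 g(\sigma)=(\rho-1)^q\sigma^q+(\rho-1)^{q-1}\sigma^q=\rho(\rho-1)^{q-1}\sigma^q,
\]
which is exactly the right-hand side. Hence the inequality holds. Strict convexity of $g$ makes the minimizer unique, so equality forces $\tau=\sigma$, i.e.\ $a/r=b/t$. Conversely, $a/r=b/t$ gives equality by the same computation.

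The boundary cases need a check. When $\sigma=0$, the minimizer is $\tau=0$ and the computation still applies. When $\tau=0$ the argument above covers it, and $g$ is convex on $[0,\infty)$, so the minimizer may lie at the endpoint. The main obstacle is the careful bookkeeping in the normalization and the sign of $J_q(r-t)$. For $r<t$ I will rely on the symmetry argument rather than redoing the computation.
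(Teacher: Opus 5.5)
Your final argument, the minimization of $g$, is correct, but it proves the normalized inequality by a different mechanism from the paper's. After normalizing, the claim is $g(\tau)\ge\rho(\rho-1)^{q-1}\sigma^q$ with $g(\tau)=|\rho\sigma-\tau|^q+(\rho-1)^{q-1}\tau^q$. You locate the unique critical point $\tau=\sigma$ of the strictly convex function $g$ and evaluate $g$ there.

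The paper instead applies Jensen's inequality once, to the decomposition $a=(1-t/r)\frac{a-b}{1-t/r}+\frac tr\cdot\frac{b}{t/r}$. In your variables this is exactly the ``write $\rho\sigma$ as a convex combination'' idea you abandoned:
\[
 \rho\sigma=\frac{\rho-1}{\rho}\cdot\frac{\rho(\rho\sigma-\tau)}{\rho-1}+\frac1\rho\cdot\rho\tau .
\]
Multiplying the resulting Jensen inequality by $(\rho-1)^{q-1}\rho^{1-q}$ gives $g(\tau)\ge\rho(\rho-1)^{q-1}\sigma^q$ at once. Equality holds if and only if the two points coincide, which happens if and only if $\tau=\sigma$.

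The two routes trade off as follows. The Jensen route needs no differentiability, no first-order condition, and no separate treatment of the endpoint $\sigma=0$, but you have to guess the weights. Your route finds the extremal relation $b/t=a/r$ as a first-order condition rather than building it into the weights.

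If you keep your version, state three things explicitly:
\begin{itemize}
\item $g$ is $C^1$ on $(0,\infty)$ because $q>1$;
\item $\tau=\sigma$ actually satisfies the critical-point equation (you derived it only as a necessary condition);
\item a critical point of a convex function on an interval is a global minimizer.
\end{itemize}
Also delete the first Jensen attempt, which yields a true but useless inequality, and the unfinished line involving $\theta$. As written they leave a dead end and a fragment in the middle of the proof.
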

\begin{proof}
For $r=t$, the assertion reduces to nonnegativity of the left-hand side, with equality precisely when $a=b$, equivalently $a/r=b/t$. Suppose $r>t$. Then $0<t/r<1$ and
\[
 a=\left(1-\frac tr\right)\frac{a-b}{1-t/r}+\frac tr\,\frac{b}{t/r}.
\]
Strict convexity of the real function $z\mapsto|z|^q$ gives
\[
 a^q\le\left(1-\frac tr\right)^{1-q}|a-b|^q
       +\left(\frac tr\right)^{1-q}b^q.
\]
Multiplying by $(1-t/r)^{q-1}$ and rearranging, we obtain
\begin{align*}
 |a-b|^q
 &\ge\left(1-\frac tr\right)^{q-1}a^q
 -\left(1-\frac tr\right)^{q-1}\left(\frac tr\right)^{1-q}b^q\\
 &=(r-t)^{q-1}\left(\frac{a^q}{r^{q-1}}-\frac{b^q}{t^{q-1}}\right).
\end{align*}
Equality in the convexity inequality holds exactly when
\[
 \frac{a-b}{1-t/r}=\frac b{t/r},
\]
which is equivalent to $a/r=b/t$. For $r<t$, interchange $(a,r)$ and $(b,t)$; both factors on the right change sign, so the product is unchanged.
\end{proof}

\begin{definition}[Picone remainder]\label{def:defect}
For a complex-valued measurable function $u$ and $\bx,\by\ne\boldsymbol0$, define
\begin{align}
 \mathfrak D_w[u](\bx,\by)
 ={}&|u(\bx)-u(\by)|^q\notag\\
 &-J_q(w(\bx)-w(\by))
 \left(\frac{|u(\bx)|^q}{w(\bx)^{q-1}}
       -\frac{|u(\by)|^q}{w(\by)^{q-1}}\right).
 \label{eq:defect}
\end{align}
The reverse triangle inequality $\big||u(\bx)|-|u(\by)|\big|\le|u(\bx)-u(\by)|$ and Lemma~\ref{lem:Picone} imply that $\mathfrak D_w[u](\bx,\by)\ge0$.
\end{definition}

\begin{theorem}[Nonlinear ground state representation]\label{thm:nonlinearidentity}
For every $u\in\X$,
\begin{equation}\label{eq:nonlinearidentity}
 \G_{s,q}[u]-\Hsharp\W_{s,q}[u]
 =\iint\frac{\mathfrak D_w[u](\bx,\by)}{\rn{\bx-\by}^{n+sq}}\dd \bx\dd \by\ge0.
\end{equation}
The remainder integral is finite.
\end{theorem}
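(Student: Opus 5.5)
Integrating the Picone remainder against the kernel splits it into the difference energy and a linear term in $|u|^q$ that, through Proposition~\ref{prop:nonlinearground}, equals $\Hsharp\W_{s,q}[u]$. So I will write
\[
 \mathfrak D_w[u](\bx,\by)=|u(\bx)-u(\by)|^q-P(\bx,\by),
\]
where
\[
 P(\bx,\by)=J_q(w(\bx)-w(\by))\Bigl(\frac{|u(\bx)|^q}{w(\bx)^{q-1}}-\frac{|u(\by)|^q}{w(\by)^{q-1}}\Bigr).
\]
The goal is to show that $P\,\rn{\bx-\by}^{-n-sq}$ is absolutely integrable on $\Qpn\times\Qpn$ and that its integral equals $\Hsharp\W_{s,q}[u]$. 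Once that holds, the remainder integral is the difference of two finite quantities, namely $\G_{s,q}[u]<\infty$ (since $u\in\X$) and the $P$-integral. Nonnegativity then follows from Definition~\ref{def:defect}, and \eqref{eq:nonlinearidentity} follows.

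\textbf{Absolute integrability.} I use $|P|\le|w(\bx)-w(\by)|^{q-1}\bigl(|u(\bx)|^qw(\bx)^{1-q}+|u(\by)|^qw(\by)^{1-q}\bigr)$. By the $\bx\leftrightarrow\by$ symmetry of the kernel and Tonelli, the integral of the right side against the kernel is
\[
 2\int\frac{|u(\bx)|^q}{w(\bx)^{q-1}}\int\frac{|w(\bx)-w(\by)|^{q-1}}{\rn{\bx-\by}^{n+sq}}\dd\by\dd\bx .
\]
By \eqref{eq:nonlinearabsbound}, the inner integral is at most $\frac{2(1-p^{-n})}{p^{sq}-1}\rn{\bx}^{-sq}w(\bx)^{q-1}$. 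The powers of $w$ then cancel, so the whole expression is bounded by $\frac{4(1-p^{-n})}{p^{sq}-1}\W_{s,q}[u]$. This is finite because $u\in\X$. The origin and the diagonal are null sets, so they cause no trouble.

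\textbf{Evaluating the $P$-integral.} With absolute integrability in hand, Fubini applies. Split $P$ into its two terms. In the second term, swap $\bx$ and $\by$; since $J_q$ is odd, this turns the second term into a copy of the first. The $P$-integral therefore equals
\[
 2\int\frac{|u(\bx)|^q}{w(\bx)^{q-1}}\int\frac{J_q(w(\bx)-w(\by))}{\rn{\bx-\by}^{n+sq}}\dd\by\dd\bx
 =\int\frac{|u(\bx)|^q}{w(\bx)^{q-1}}\,\mathscr L_{s,q}w(\bx)\dd\bx .
\]
By Proposition~\ref{prop:nonlinearground}, $\mathscr L_{s,q}w(\bx)=\Hsharp\rn{\bx}^{-sq}w(\bx)^{q-1}$, so this is exactly $\Hsharp\W_{s,q}[u]$.

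\textbf{Expected obstacle.} The main point to get right is justifying the splitting into separately integrable pieces. Each piece must be shown absolutely convergent individually before Fubini is used, and the dominating bound above, coming from Lemma~\ref{lem:nonlinearabs}, supplies exactly that. Beyond this step the proof is bookkeeping.
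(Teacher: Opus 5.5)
Your proposal is correct and follows essentially the same route as the paper. The paper also dominates the signed Picone term via Lemma~\ref{lem:nonlinearabs} to get the bound $\frac{4(1-p^{-n})}{p^{sq}-1}\W_{s,q}[u]$, then symmetrizes using the oddness of $J_q$ to reduce it to $\int |u|^q w^{1-q}\mathscr L_{s,q}w$, and identifies this as $\Hsharp\W_{s,q}[u]$ by Proposition~\ref{prop:nonlinearground}.
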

\begin{proof}
By Lemma~\ref{lem:nonlinearabs} and Tonelli's theorem,
\begin{align*}
 \iint\frac{|J_q(w(\bx)-w(\by))|}{\rn{\bx-\by}^{n+sq}}
 \left|\frac{|u(\bx)|^q}{w(\bx)^{q-1}}-\frac{|u(\by)|^q}{w(\by)^{q-1}}\right|
 \dd \bx\dd \by&\le2\int\frac{|u(\bx)|^q}{w(\bx)^{q-1}}
 \left(\int\frac{|w(\bx)-w(\by)|^{q-1}}{\rn{\bx-\by}^{n+sq}}\dd \by\right)\dd \bx\\
 &\le\frac{4(1-p^{-n})}{p^{sq}-1}\W_{s,q}[u]\\
 &<\infty.
\end{align*}
Thus the signed term in \eqref{eq:defect}, multiplied by the kernel, is absolutely integrable. Symmetry of the kernel and antisymmetry of $J_q(w(\bx)-w(\by))$ give
\begin{align*}
 \iint\frac{J_q(w(\bx)-w(\by))}{\rn{\bx-\by}^{n+sq}}
 \left(\frac{|u(\bx)|^q}{w(\bx)^{q-1}}-\frac{|u(\by)|^q}{w(\by)^{q-1}}\right)
 \dd \bx\dd \by&=2\int\frac{|u(\bx)|^q}{w(\bx)^{q-1}}
 \left(\int\frac{J_q(w(\bx)-w(\by))}{\rn{\bx-\by}^{n+sq}}\dd \by\right)\dd \bx\\
 &=\int\frac{|u(\bx)|^q}{w(\bx)^{q-1}}\mathscr L_{s,q}w(\bx)\dd \bx\\
 &=\Hsharp\W_{s,q}[u],
\end{align*}
where the last step uses Proposition~\ref{prop:nonlinearground}. Since $\G_{s,q}[u]<\infty$, integration of \eqref{eq:defect} proves \eqref{eq:nonlinearidentity}. Nonnegativity of the remainder follows from Definition~\ref{def:defect}.
\end{proof}

\begin{corollary}[Hardy inequality and equality case]\label{cor:nonlinearHardy}
Inequality \eqref{eq:mainnonlinear} holds for every $u\in\X$, with equality if and only if $u=0$ almost everywhere. In particular, equality in $\Sspace$ holds only for $u\equiv0$.
\end{corollary}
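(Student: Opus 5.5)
The inequality follows at once from Theorem~\ref{thm:nonlinearidentity}. For $u\in\X$, the remainder integral in \eqref{eq:nonlinearidentity} is nonnegative because $\mathfrak D_w[u]\ge0$ (Definition~\ref{def:defect}). Hence $\G_{s,q}[u]\ge\Hsharp\W_{s,q}[u]$. Every $u\in\Sspace$ lies in $\X$ by Lemma~\ref{lem:localconstant}, since $sq<n$ and the difference energy is finite for all $s>0$. The inequality therefore also holds on $\Sspace$. The zero function trivially attains equality.

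For the equality case, suppose $u\in\X$ and equality holds. Then the remainder integral vanishes. Since the kernel $\rn{\bx-\by}^{-n-sq}$ is positive off the diagonal, $\mathfrak D_w[u](\bx,\by)=0$ for almost every pair $(\bx,\by)$. Split $\mathfrak D_w[u]$ as the sum of two nonnegative pieces:
\[
 \bigl(|u(\bx)-u(\by)|^q-\bigl||u(\bx)|-|u(\by)|\bigr|^q\bigr)
 +\Bigl(\bigl||u(\bx)|-|u(\by)|\bigr|^q-J_q(w(\bx)-w(\by))\Bigl(\tfrac{|u(\bx)|^q}{w(\bx)^{q-1}}-\tfrac{|u(\by)|^q}{w(\by)^{q-1}}\Bigr)\Bigr).
\]
Both pieces must vanish almost everywhere. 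Apply the equality clause of Lemma~\ref{lem:Picone} to the second piece with $a=|u(\bx)|$, $b=|u(\by)|$, $r=w(\bx)$, $t=w(\by)$. This gives $|u(\bx)|/w(\bx)=|u(\by)|/w(\by)$ for almost every pair. By Fubini's theorem, as in Corollary~\ref{cor:L2Hardy}, there is a constant $c\ge0$ with $|u|=c\,w$ almost everywhere.

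If $c>0$, then
\[
 \W_{s,q}[u]=c^q\int\rn{\bx}^{-sq}\rn{\bx}^{-(n-sq)}\dd\bx=c^q\int\rn{\bx}^{-n}\dd\bx=\infty
\]
by the shell sum \eqref{eq:shellsum}, since every shell contributes $1-p^{-n}$. This contradicts $u\in\X$. Hence $c=0$ and $u=0$ almost everywhere. For $u\in\Sspace$, local constancy then upgrades this to $u\equiv0$. The first piece of the splitting is not needed beyond its nonnegativity.

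The main point requiring care is the almost-everywhere application of Picone's equality case, together with the passage from pairwise equality of $|u|/w$ to a global constant. Both are routine: Lemma~\ref{lem:Picone} needs $r,t>0$, which holds since $w>0$ away from the origin, a null set. The Fubini step works because the set of pairs where the ratios differ is null, so for almost every fixed $\by$ the ratio at $\bx$ equals its value at $\by$ for almost every $\bx$.
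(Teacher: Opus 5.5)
Your proposal is correct and follows essentially the same route as the paper. You get the inequality from the remainder identity of Theorem~\ref{thm:nonlinearidentity}. In the equality case you use vanishing of $\mathfrak D_w[u]$ almost everywhere, the equality clause of Lemma~\ref{lem:Picone} applied to $|u(\bx)|,|u(\by)|,w(\bx),w(\by)$, Fubini to obtain $|u|=cw$, and divergence of $\int\rn{\bx}^{-n}\dd\bx$ to force $c=0$. Your explicit splitting of $\mathfrak D_w[u]$ into two nonnegative pieces is just a rephrasing of the paper's chain of inequalities with equality throughout.
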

\begin{proof}
The inequality follows from Theorem~\ref{thm:nonlinearidentity}, and Lemma~\ref{lem:localconstant} gives $\Sspace\subset\X$. If equality holds, then $\mathfrak D_w[u](\bx,\by)=0$ for almost every $(\bx,\by)$. For such a pair,
\begin{align*}
 |u(\bx)-u(\by)|^q
 &\ge\bigl||u(\bx)|-|u(\by)|\bigr|^q\\
 &\ge J_q(w(\bx)-w(\by))
 \left(\frac{|u(\bx)|^q}{w(\bx)^{q-1}}
       -\frac{|u(\by)|^q}{w(\by)^{q-1}}\right).
\end{align*}
By \eqref{eq:defect}, equality holds throughout, in particular in Picone's inequality. Lemma~\ref{lem:Picone} gives
\[
 \frac{|u(\bx)|}{w(\bx)}=\frac{|u(\by)|}{w(\by)}
 \quad\text{for almost every }(\bx,\by).
\]
Fubini's theorem now implies $|u|=cw$ almost everywhere for some $c\ge0$. If $c>0$, then
\[
 \W_{s,q}[u]=c^q\int\rn{\bx}^{-sq-q(n/q-s)}\dd \bx
 =c^q(1-p^{-n})\sum_{k\in\mathbb Z}1=\infty,
\]
contrary to $u\in\X$. Thus $c=0$ and $u=0$ almost everywhere. Conversely, the zero function attains equality.
\end{proof}

\begin{remark}\label{rem:spacewarning}
In \eqref{eq:Xdef}, both the difference energy and the Hardy term are assumed finite. A nonzero constant has zero difference energy but infinite Hardy integral when $sq<n$. A formulation of \eqref{eq:mainnonlinear} based on the difference energy alone therefore needs an additional condition at infinity.
\end{remark}

\subsection{Shell counting and sharpness}\label{sec:nonlinearsharp}
Again let $u_N=w\ind_{A_N}$. By \eqref{eq:linearweight},
\[
 \W_{s,q}[u_N]=(2N+1)(1-p^{-n}).
\]

\begin{lemma}[Contribution of a shell pair]\label{lem:shellpairs}
Let $k<\ell$. The contribution of $(S_k\times S_\ell)\cup(S_\ell\times S_k)$ to $\G_{s,q}[u_N]$ is
\begin{equation*}
 \begin{cases}
 2(1-p^{-n})^2p^{-sq(\ell-k)}
 \bigl(1-p^{-(n/q-s)(\ell-k)}\bigr)^q,&-N\le k<\ell\le N,\\
 2(1-p^{-n})^2p^{-sq(\ell-k)},&k<-N\le\ell\le N,\\
 2(1-p^{-n})^2p^{-n(\ell-k)},&-N\le k\le N<\ell,\\
 0,&k,\ell\notin[-N,N].
 \end{cases}
\end{equation*}
Pairs of points on the same shell contribute zero.
\end{lemma}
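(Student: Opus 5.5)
The plan is a direct computation using the ultrametric distance formula \eqref{eq:ultradistance} and the shell measures of Lemma~\ref{lem:shell}. On each shell $S_j$ the function $u_N$ is constant, equal to $p^{(n/q-s)j}\ind_{[-N,N]}(j)$. If $\bx,\by$ lie on the same shell, then $u_N(\bx)=u_N(\by)$, so the integrand vanishes; this disposes of the diagonal pairs. For $k<\ell$, $\bx\in S_k$ and $\by\in S_\ell$, we have $\rn{\bx-\by}=p^{-k}$. Hence the integrand is constant on $S_k\times S_\ell$, and its integral is
\[
 |S_k|\,|S_\ell|\,p^{k(n+sq)}\,\bigl|u_N|_{S_k}-u_N|_{S_\ell}\bigr|^q
 =(1-p^{-n})^2p^{-n\ell+sqk}\bigl|u_N|_{S_k}-u_N|_{S_\ell}\bigr|^q .
\]
The contribution of the symmetric block $S_\ell\times S_k$ is the same, which produces the factor $2$.

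The cases are then handled one at a time.

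\emph{Both indices in $[-N,N]$.} Factor out the larger value $p^{(n/q-s)\ell}$:
\[
 \bigl|p^{(n/q-s)k}-p^{(n/q-s)\ell}\bigr|^q=p^{(n-sq)\ell}\bigl(1-p^{-(n/q-s)(\ell-k)}\bigr)^q .
\]
Multiplying by $p^{-n\ell+sqk}$ gives $p^{-sq(\ell-k)}\bigl(1-p^{-(n/q-s)(\ell-k)}\bigr)^q$.

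\emph{$k<-N\le\ell\le N$.} The difference is $p^{(n/q-s)\ell}$, whose $q$th power is $p^{(n-sq)\ell}$. This yields $p^{-sq(\ell-k)}$.

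\emph{$-N\le k\le N<\ell$.} The difference is $p^{(n/q-s)k}$, whose $q$th power is $p^{(n-sq)k}$. This yields $p^{-n\ell+nk}=p^{-n(\ell-k)}$.

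\emph{Both indices outside $[-N,N]$.} Here $u_N$ vanishes on both shells, so the contribution is $0$.

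The only point requiring care is bookkeeping of exponents and correctly identifying which of $k,\ell$ governs the distance: the smaller index $k$ is the outer shell, which sets the distance. There is no genuine obstacle. Nonnegativity of the integrand makes every interchange legitimate by Tonelli's theorem.
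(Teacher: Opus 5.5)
Your proposal is correct and matches the paper's proof. Both use $\rn{\bx-\by}=p^{-k}$ and the shell measures to get the symmetric shell-pair kernel integral $2(1-p^{-n})^2p^{sqk-n\ell}$, then multiply by the $q$th power of the constant difference of $u_N$ in each case, and your exponent bookkeeping agrees with the paper's in all four cases.
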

\begin{proof}
For $-N\le j\le N$, the function $u_N$ equals $p^{(n/q-s)j}$ on $S_j$; on all other shells it vanishes. Since $k<\ell$, the distance between the two shells is $p^{-k}$, and hence
\begin{equation*}
 \iint_{(S_k\times S_\ell)\cup(S_\ell\times S_k)}
 \frac{\dd\bx\dd\by}{\rn{\bx-\by}^{n+sq}}
 =2(1-p^{-n})^2p^{sqk-n\ell}.
\end{equation*}
Multiply by the $q$th power of the absolute difference in function values.

If both shells lie in $A_N$, set $m=\ell-k$. Since $q(n/q-s)=n-sq$, the contribution is
\begin{align*}
 2(1-p^{-n})^2p^{sqk-n\ell}p^{(n-sq)\ell}
 \bigl(1-p^{-(n/q-s)m}\bigr)^q=2(1-p^{-n})^2p^{-sqm}\bigl(1-p^{-(n/q-s)m}\bigr)^q.
\end{align*}
If $k<-N\le\ell\le N$, then $S_k$ lies outside the annulus and $u_N$ vanishes there, while its value on $S_\ell$ is $p^{(n/q-s)\ell}$. The contribution is
\[
 2(1-p^{-n})^2p^{sqk-n\ell}p^{(n-sq)\ell}
 =2(1-p^{-n})^2p^{-sq(\ell-k)}.
\]
If $-N\le k\le N<\ell$, then $S_\ell$ lies on the inner side of the annulus and $u_N$ vanishes there, while its value on $S_k$ is $p^{(n/q-s)k}$. The contribution is
\[
 2(1-p^{-n})^2p^{sqk-n\ell}p^{(n-sq)k}
 =2(1-p^{-n})^2p^{-n(\ell-k)}.
\]
The contribution vanishes if both shells lie outside $A_N$, or if the two points lie on the same shell, where $u_N$ is constant.
\end{proof}

\begin{proposition}[Exact formula for the truncated energy]\label{prop:nonlinearenergy}
For $N\ge1$,
\begin{equation}\label{eq:nonlinearenergy}
 \begin{aligned}
 \G_{s,q}[u_N]=2(1-p^{-n})^2\Bigg[&
 \sum_{m=1}^{2N}(2N+1-m)p^{-sqm}\bigl(1-p^{-(n/q-s)m}\bigr)^q\\
 &+\frac{p^{-n}(1-p^{-n(2N+1)})}{(1-p^{-n})^2}
 +\frac{p^{-sq}(1-p^{-sq(2N+1)})}{(1-p^{-sq})^2}\Bigg].
 \end{aligned}
\end{equation}
\end{proposition}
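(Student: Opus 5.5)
We sum the shell-pair contributions of Lemma~\ref{lem:shellpairs} over all unordered pairs $\{k,\ell\}$ with $k<\ell$. This suffices because $\G_{s,q}[u_N]$ is an integral of a nonnegative function over $\Qpn\times\Qpn$. Up to null sets, the plane is the disjoint union of the diagonal blocks $S_k\times S_k$ and the symmetric off-diagonal blocks $(S_k\times S_\ell)\cup(S_\ell\times S_k)$ with $k<\ell$. The points at the origin form a null set, and by Tonelli's theorem the energy is the sum of the block contributions. The diagonal blocks contribute zero by the last assertion of the lemma, and so do pairs with both shells outside $[-N,N]$. There remain three families, which we sum separately.

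\textbf{Interior pairs} ($-N\le k<\ell\le N$). Each such pair contributes a quantity depending only on $m=\ell-k\in\{1,\dots,2N\}$. The number of pairs with a given $m$ is $2N+1-m$. This gives the first sum,
\[
2(1-p^{-n})^2\sum_{m=1}^{2N}(2N+1-m)p^{-sqm}\bigl(1-p^{-(n/q-s)m}\bigr)^q .
\]

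\textbf{Outer boundary pairs} ($k<-N\le\ell\le N$). Write $\ell=-N+j$ with $0\le j\le 2N$, and $k=-N-i$ with $i\ge1$, so that $\ell-k=i+j$. The double sum factors:
\[
\sum_{j=0}^{2N}\sum_{i\ge1}p^{-sq(i+j)}
=\frac{p^{-sq}}{1-p^{-sq}}\cdot\frac{1-p^{-sq(2N+1)}}{1-p^{-sq}}.
\]
This is the third term in the bracket.

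\textbf{Inner boundary pairs} ($-N\le k\le N<\ell$). Write $k=N-j$ and $\ell=N+i$, with $0\le j\le2N$ and $i\ge1$. The same factorization with exponent $n$ in place of $sq$ gives
\[
\frac{p^{-n}(1-p^{-n(2N+1)})}{(1-p^{-n})^2},
\]
which is the second term. The inner and outer boundary sums differ because the kernel exponent $sq$ is paired with the outer side, while the total homogeneity $n$ is paired with the inner side, as the lemma shows.

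Adding the three families and factoring out $2(1-p^{-n})^2$ gives \eqref{eq:nonlinearenergy}. The only delicate step is the bookkeeping. Each unordered pair must be counted exactly once, which is handled by the symmetric union already built into Lemma~\ref{lem:shellpairs} together with the condition $k<\ell$. The reindexing must also correctly identify which shell plays the role of $k$ (the outer, larger radius) in each boundary family. All series have nonnegative terms and converge geometrically since $sq>0$ and $n>0$, so interchanging the sums needs no further justification.
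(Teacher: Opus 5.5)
Your proposal is correct and follows the paper's proof essentially step for step. It sums the same three families from Lemma~\ref{lem:shellpairs}, uses the same count $2N+1-m$ for interior pairs, and reindexes the two boundary sums into the same factored geometric series; your explicit Tonelli/block-decomposition justification simply spells out what the paper leaves implicit.
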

\begin{proof}
The annulus $A_N$ has $2N+1$ shells. An interior pair $(S_k,S_{k+m})$ with index difference $m$ has $k$ ranging from $-N$ to $N-m$, so there are $2N+1-m$ pairs. Their total contribution is
\[
 2(1-p^{-n})^2\sum_{m=1}^{2N}(2N+1-m)p^{-sqm}
 \bigl(1-p^{-(n/q-s)m}\bigr)^q.
\]
By Lemma~\ref{lem:shellpairs}, the inner boundary contribution corresponds to $-N\le k\le N<\ell$ and equals
\begin{align*}
 2(1-p^{-n})^2\sum_{k=-N}^{N}\sum_{\ell>N}p^{-n(\ell-k)}
 &=2(1-p^{-n})^2\frac{p^{-n}}{1-p^{-n}}\sum_{j=0}^{2N}p^{-nj}\\
 &=2(1-p^{-n})^2\frac{p^{-n}(1-p^{-n(2N+1)})}{(1-p^{-n})^2}.
\end{align*}
The outer boundary contribution corresponds to $k<-N\le\ell\le N$ and equals
\begin{align*}
 2(1-p^{-n})^2\sum_{\ell=-N}^{N}\sum_{k<-N}p^{-sq(\ell-k)}
 &=2(1-p^{-n})^2\frac{p^{-sq}}{1-p^{-sq}}\sum_{j=0}^{2N}p^{-sqj}\\
 &=2(1-p^{-n})^2\frac{p^{-sq}(1-p^{-sq(2N+1)})}{(1-p^{-sq})^2}.
\end{align*}
Adding these three contributions proves \eqref{eq:nonlinearenergy}.
\end{proof}

\begin{theorem}[Sharpness and a convergence estimate]\label{thm:nonlinearsharp}
The constant $\Hsharp$ in \eqref{eq:mainnonlinear} is sharp. Moreover,
\begin{equation}\label{eq:nonlinearrate}
 0\le\frac{\G_{s,q}[u_N]}{\W_{s,q}[u_N]}-\Hsharp
 \le\frac{2(1-p^{-n})}{2N+1}
 \left(\frac{p^{-n}}{(1-p^{-n})^2}+\frac{p^{-sq}}{(1-p^{-sq})^2}\right),
\end{equation}
so the energy quotient converges to $\Hsharp$ with error $O((2N+1)^{-1})$.
\end{theorem}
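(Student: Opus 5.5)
The plan is to combine the exact energy formula of Proposition~\ref{prop:nonlinearenergy} with the Hardy-term identity $\W_{s,q}[u_N]=(2N+1)(1-p^{-n})$. Since $u_N\in\Sspace$, Corollary~\ref{cor:nonlinearHardy} already gives the lower bound $\G_{s,q}[u_N]/\W_{s,q}[u_N]-\Hsharp\ge0$. So only the upper bound in \eqref{eq:nonlinearrate} needs an argument, and sharpness will follow once the right-hand side tends to zero as $N\to\infty$.

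For the upper bound, divide \eqref{eq:nonlinearenergy} by $(2N+1)(1-p^{-n})$. Write the interior sum as
\[
 \sum_{m=1}^{2N}(2N+1-m)a_m,\qquad a_m=p^{-sqm}\bigl(1-p^{-(n/q-s)m}\bigr)^q\ge0.
\]
After division, the interior part becomes
\[
 2(1-p^{-n})\sum_{m=1}^{2N}\Bigl(1-\tfrac{m}{2N+1}\Bigr)a_m.
\]
This is at most $2(1-p^{-n})\sum_{m=1}^{2N}a_m$, which is at most $\Hsharp$ by \eqref{eq:Hseries}, because the dropped terms $m>2N$ are nonnegative. The two boundary terms have the factors $1-p^{-n(2N+1)}\le1$ and $1-p^{-sq(2N+1)}\le1$, so after division they are bounded by
\[
 \frac{2(1-p^{-n})}{2N+1}\left(\frac{p^{-n}}{(1-p^{-n})^2}+\frac{p^{-sq}}{(1-p^{-sq})^2}\right).
\]
Adding the two estimates gives exactly \eqref{eq:nonlinearrate}.

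The bound in \eqref{eq:nonlinearrate} is $O((2N+1)^{-1})$, so $\G_{s,q}[u_N]/\W_{s,q}[u_N]\to\Hsharp$. Suppose some constant $C>\Hsharp$ made \eqref{eq:mainnonlinear} valid on $\Sspace$. Then the quotients of the test functions $u_N$ would all be at least $C$, which contradicts their convergence to $\Hsharp$; hence $\Hsharp$ is sharp. The only point needing care is the sign bookkeeping: the interior truncation weight $1-m/(2N+1)$ only decreases the sum and the neglected tail is nonnegative. With the exact formula available, I expect no real obstacle.
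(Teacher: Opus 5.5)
Your proposal is correct and follows essentially the same route as the paper. Like the paper, you take the lower bound from Corollary~\ref{cor:nonlinearHardy} and get the upper bound by inserting the exact formula \eqref{eq:nonlinearenergy}, dropping the weights $-m$ and the nonnegative tail $m>2N$ so the interior sum is bounded by $\Hsharp$, bounding $1-p^{-n(2N+1)}$ and $1-p^{-sq(2N+1)}$ by $1$, and dividing by $\W_{s,q}[u_N]=(2N+1)(1-p^{-n})$.
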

\begin{proof}
The lower bound is Corollary~\ref{cor:nonlinearHardy}. For the upper bound, nonnegativity of the summands gives
\begin{align*}
 \sum_{m=1}^{2N}(2N+1-m)p^{-sqm}\bigl(1-p^{-(n/q-s)m}\bigr)^q\le(2N+1)\sum_{m=1}^{\infty}p^{-sqm}\bigl(1-p^{-(n/q-s)m}\bigr)^q.
\end{align*}
Substitute into \eqref{eq:nonlinearenergy}, use \eqref{eq:Hseries} and \eqref{eq:linearweight}, and bound $1-p^{-n(2N+1)}$ and $1-p^{-sq(2N+1)}$ by $1$ to obtain \eqref{eq:nonlinearrate}. As $N\to\infty$, the quotient tends to $\Hsharp$, proving sharpness.
\end{proof}

\begin{remark}[Boundedness of the truncation remainder]
Subtracting $\Hsharp\W_{s,q}[u_N]$ from \eqref{eq:nonlinearenergy} gives
\begin{align*}
 \frac{\G_{s,q}[u_N]-\Hsharp\W_{s,q}[u_N]}{2(1-p^{-n})^2}
 ={}&\frac{p^{-n}(1-p^{-n(2N+1)})}{(1-p^{-n})^2}
 +\frac{p^{-sq}(1-p^{-sq(2N+1)})}{(1-p^{-sq})^2}\\
 &-\sum_{m=1}^{2N}mp^{-sqm}\bigl(1-p^{-(n/q-s)m}\bigr)^q\\
 &-(2N+1)\sum_{m=2N+1}^{\infty}p^{-sqm}\bigl(1-p^{-(n/q-s)m}\bigr)^q.
\end{align*}
Since $0\le(1-p^{-(n/q-s)m})^q\le1$,
\[
 \sum_{m=1}^{\infty}mp^{-sqm}\bigl(1-p^{-(n/q-s)m}\bigr)^q
 \le\sum_{m=1}^{\infty}mp^{-sqm}<\infty,
\]
and
\begin{align*}
 (2N+1)\sum_{m=2N+1}^{\infty}p^{-sqm}\bigl(1-p^{-(n/q-s)m}\bigr)^q\le\sum_{m=2N+1}^{\infty}mp^{-sqm}\bigl(1-p^{-(n/q-s)m}\bigr)^q.
\end{align*}
Thus both boundary contributions and both shell-counting errors are uniformly bounded in $N$.
\end{remark}

Corollary~\ref{cor:nonlinearHardy} and Theorem~\ref{thm:nonlinearsharp} complete the proof of Theorem~\ref{thm:mainnonlinear}.

\subsection{Relations for \texorpdfstring{$q=2$}{q=2}}\label{sec:compatibility}
\begin{proposition}[Energies and sharp constants in the quadratic case]\label{prop:q2}
Let $q=2$, $0<s<n/2$, and $\alpha=2s$. For every $u\in\Sspace$,
\begin{equation}\label{eq:q2energy}
 \|D^su\|_2^2=\E_{2s}[u]
 =\frac{C_{n,2s}}2\G_{s,2}[u].
\end{equation}
The sharp constants satisfy
\begin{equation}\label{eq:q2constants}
 M_{p,n,s,2}^2=C^\sharp_{p,n,2s},\qquad
 H_{p,n,s,2}=\frac2{C_{n,2s}}C^\sharp_{p,n,2s}.
\end{equation}
\end{proposition}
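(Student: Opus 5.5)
The energy identity \eqref{eq:q2energy} comes first. For $u\in\Sspace$ and $s>0$, Definition~\ref{def:Taibleson} and Plancherel give $\|D^su\|_2^2=\int\rn{\bxi}^{2s}|\hat u(\bxi)|^2\dd\bxi$. Corollary~\ref{cor:quadratic} with $a=2s$ identifies this with $\E_{2s}[u]$. The same corollary expresses it as $\frac{C_{n,2s}}2$ times the double integral with kernel $\rn{\bx-\by}^{-n-2s}$, and that double integral is exactly $\G_{s,2}[u]$ because $n+sq=n+2s$ when $q=2$. No integrability issue arises, since Lemma~\ref{lem:localconstant} and Proposition~\ref{prop:integral} already give finiteness and $D^su\in L^2$.

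For the constants I would compute each side explicitly rather than argue abstractly.

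\emph{First identity in \eqref{eq:q2constants}.} With $q=q'=2$, we have
\[
 M_{p,n,s,2}=\frac{\GammaN(n/2+s)}{\GammaN(n/2)}.
\]
From \eqref{eq:Gammadef}, $\GammaN(n/2)=(1-p^{-n/2})/(1-p^{-n/2})=1$. Hence $M_{p,n,s,2}=\GammaN((n+\alpha)/2)$ with $\alpha=2s$, and squaring gives $C^\sharp_{p,n,2s}$.

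\emph{Second identity in \eqref{eq:q2constants}.} Set $\alpha=2s$ and $\delta=n/2-s=(n-\alpha)/2$. Then \eqref{eq:Hseries} becomes
\[
 H_{p,n,s,2}=2(1-p^{-n})\sum_{m\ge1}p^{-\alpha m}(1-p^{-\delta m})^2 .
\]
Expanding the square gives $p^{-\alpha m}-2p^{-(\alpha+\delta)m}+p^{-nm}$, using $\alpha+2\delta=n$. Summing the three geometric series yields
\[
 H_{p,n,s,2}=2(1-p^{-n})\left(\frac1{p^\alpha-1}-\frac2{p^{(n+\alpha)/2}-1}+\frac1{p^n-1}\right).
\]
This must be checked against $2\csharp/C_{n,\alpha}$.

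There is a cleaner route that I would use as a cross-check. Proposition~\ref{prop:nonlinearground} with $q=2$ reads $2\int(w(\bx)-w(\by))\rn{\bx-\by}^{-n-\alpha}\dd\by=H_{p,n,s,2}\rn{\bx}^{-\alpha}w(\bx)$, where $w=h$. By Proposition~\ref{prop:integral} and Lemma~\ref{lem:L2ground}, the left side equals $(2/C_{n,\alpha})D^\alpha h=(2/C_{n,\alpha})\csharp\rn{\bx}^{-\alpha}h$, since Lemma~\ref{lem:power} evaluates the integral representation pointwise. Comparing coefficients gives the identity.

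Alternatively, sharpness of both constants together with \eqref{eq:q2energy} forces $H_{p,n,s,2}\cdot C_{n,2s}/2=C^\sharp_{p,n,2s}$, because the best constants for the same ratio must coincide. The main care point is that the sharp constant is an infimum over $\Sspace$, so the two infima coincide exactly. This is fine: \eqref{eq:q2energy} holds on all of $\Sspace$, so the two Hardy quotients differ by the fixed factor $C_{n,2s}/2$.

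I expect no real obstacle. The only step needing attention is matching the $w$ and $h$ exponents, namely $n/q-s=(n-\alpha)/2$, so that the pointwise equations are literally the same.
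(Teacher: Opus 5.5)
Your proof is correct, and most of it coincides with the paper's. The paper's proof also consists of the energy identity (Plancherel plus Corollary~\ref{cor:quadratic} with $a=2s$), the first constant identity via $\GammaN(n/2)=1$, and the sharpness argument for the second identity. In that argument the two Hardy quotients differ on all of $\Sspace$ by the fixed factor $C_{n,2s}/2$, so their infima agree.

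The genuinely different ingredient is your comparison of ground-state equations, which takes the place of the paper's secondary check. The paper finishes the series computation that you left open: it simplifies $2(1-p^{-n})\bigl(\frac{1}{p^{2s}-1}-\frac{2}{p^{n/2+s}-1}+\frac{1}{p^{n}-1}\bigr)$ by hand to $\frac{2}{C_{n,2s}}C^\sharp_{p,n,2s}$.

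Your comparison is valid:
\begin{itemize}
\item For $q=2$ one has $J_2(t)=t$ and $w=h$.
\item Lemma~\ref{lem:power} with $a=\alpha$ and $\delta=(n-\alpha)/2<n-\alpha$ legitimately evaluates the integral representation of $D^\alpha h$ pointwise. Hence $\mathscr L_{s,2}h=(2/C_{n,\alpha})D^\alpha h$.
\item Evaluating Proposition~\ref{prop:nonlinearground} and Lemma~\ref{lem:L2ground} at a single $\bx\ne\boldsymbol0$ then gives $H_{p,n,s,2}=2C^\sharp_{p,n,\alpha}/C_{n,\alpha}$.
\end{itemize}

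This route needs neither the minimizing sequences behind the sharpness statements nor any algebra. It also explains the relation structurally: one critical function and one integral, seen under two normalizations. The paper's hand simplification is more pedestrian, but it gives a check that does not depend on Lemma~\ref{lem:power} or the reflection identity.

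Your unfinished series expansion would not stand alone as a proof. It is not needed, though, since your other two arguments are each complete.
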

\begin{proof}
The Fourier multiplier definition and Plancherel's identity give
\[
 \|D^su\|_2^2=\int\rn{\bxi}^{2s}|\hat u(\bxi)|^2\dd\bxi
 =\E_{2s}[u].
\]
The jump-kernel formula \eqref{eq:quadratic} then yields \eqref{eq:q2energy}. Since $\GammaN(n/2)=1$,
\[
 M_{p,n,s,2}^2
 =[\GammaN(n/2+s)]^2=C^\sharp_{p,n,2s}.
\]
Substituting \eqref{eq:q2energy} into the corresponding inequalities and using sharpness proves the second identity in \eqref{eq:q2constants}. It can also be checked directly from the series: set $q=2$ in \eqref{eq:Hseries}, expand the square, and sum to obtain
\begin{align*}
 H_{p,n,s,2}
 &=2(1-p^{-n})\sum_{m\ge1}
 \left(p^{-2sm}-2p^{-(n/2+s)m}+p^{-nm}\right)\\
 &=2(1-p^{-n})\left(\frac1{p^{2s}-1}
       -\frac2{p^{n/2+s}-1}+\frac1{p^n-1}\right)\\
 &=\frac{2(p^{n/2+s}+1)(p^{n/2}-p^s)^2}
 {p^n(p^{2s}-1)(p^{n/2+s}-1)}\\
 &=\frac{2(1-p^{-n-2s})}{p^{2s}-1}
 \left(\frac{1-p^{-(n/2-s)}}{1-p^{-(n/2+s)}}\right)^2\\
 &=\frac{2}{C_{n,2s}}C^\sharp_{p,n,2s}.
\end{align*}
The simplification uses $p^{2s}p^n=(p^{n/2+s})^2$, followed by \eqref{eq:kernelconstant} and the formula for the sharp quadratic-form constant.
\end{proof}

\begin{remark}[The two $L^q$ energies]
For $q\ne2$, $\|D^su\|_q^q$ and $\G_{s,q}[u]$ are distinct, and \eqref{eq:q2energy} no longer follows from Plancherel's identity. They have the same homogeneity and critical power $w=\rn{\bx}^{-(n/q-s)}$, but their sharp constants come from different arguments: the weighted Riesz potential norm and the nonlinear equation for the critical power, respectively.
\end{remark}

\section*{Acknowledgments}
I am grateful to my supervisor, Professor Bobo Hua, for his invaluable support and guidance.

\bibliographystyle{unsrt}
\bibliography{reference}
\noindent\emph{Yaojia Sun, School of Mathematical Sciences, Fudan University,
Shanghai 200433, China}\\
\noindent\emph{Email:}
\href{mailto:26110180043@m.fudan.edu.cn}{\texttt{26110180043@m.fudan.edu.cn}}
\end{document}